\documentclass[10pt]{article}
\usepackage{amsmath}
\usepackage{mathrsfs}
\usepackage{amssymb}
\usepackage{latexsym}
\usepackage{amsthm}
\usepackage{amsmath,amssymb}
\usepackage{bm}
\usepackage{ascmac}
\usepackage{enumerate}
\theoremstyle{plain}
\usepackage[dvipdfmx]{graphicx}
\usepackage{tikz}
\usetikzlibrary{intersections,calc,arrows.meta}
\usepackage{float}

\usepackage[
  left=2.6cm,
  right=2.6cm,
  top=2.5cm,
  bottom=2.5cm]{geometry}

\newtheorem{thm}{Theorem}[section]

\newtheorem{prop}[thm]{Proposition}
\newtheorem{lem}[thm]{Lemma}
\newtheorem{cor}[thm]{Corollary}
\newtheorem{remark}[thm]{Remark}

\numberwithin{equation}{section}

\newcommand{\vare}{\varepsilon}

\allowdisplaybreaks

\title{Convergence of stationary distributions for 
a class of zero-range processes 
and its application to a wealth distribution model \\ with debt}
\author{
Hironobu Sakagawa
\thanks{
Department of Mathematics, 
Faculty of Science and Technology, Keio University, 
3-14-1 Hiyoshi, Kouhoku-ku, Yokohama 223-8522, JAPAN. 
{\it E-mail address\/}: sakagawa{\char'100}math.keio.ac.jp} 
}

\date{\empty}

\begin{document}

\maketitle 

\begin{abstract}
Understanding how macroscopic wealth distributions emerge from microscopic 
transaction rules among agents is a central problem in econophysics. 
While many traditional models restrict agents' wealth to non-negative values, 
debt is an essential feature of realistic economic systems. 
In this paper, we introduce a stochastic transaction mechanism 
into the agent-based model with a central-bank-mediated 
collective debt limit, as studied in \cite{LR19}. 
In our model, each agent probabilistically determines whether 
to transfer a coin, depending on their current asset or debt level. 
From a probabilistic perspective, the resulting system can be formulated 
as a zero-range process with $\mathbb{Z}$-valued site occupation numbers, 
where negative values represent debt. 
We prove that, under an appropriate scaling, 
the wealth distribution in the stationary state converges 
to either a two-sided Gamma distribution or a Gamma distribution, 
depending on the asymptotic behavior of the function governing coin transfers. 
Our results rigorously identify these previously unconsidered 
limiting distributions and provide a mathematical framework 
for understanding how stochastic monetary exchanges 
give rise to macroscopic wealth distributions.

\end{abstract}

\begin{center}{\small\bfseries Key words.}
{\small Econophysics, zero-range process, 
stationary distribution, equivalence of ensembles, 
local limit theorem.} 
\end{center}

2020 Mathematics Subject Classification: 60K35, 60F99, 91B80. 

\section{Model and result}

\subsection{Introduction}
In economic systems, understanding how macroscopic wealth distributions 
and economic inequality emerge from microscopic transaction rules 
among agents is one of the most fundamental challenges 
in the field of econophysics.
Inspired by the close analogy with energy transfer between particles 
in statistical mechanics, mathematical models in which individual 
agents randomly exchange wealth or money have been actively studied. 
As a result, numerous non-trivial macroscopic behaviors arising from 
various transaction rules have been revealed 
through numerical simulations and heuristic physical arguments 
(cf. \cite{CCC}, \cite{GG24}, \cite{YR09} and references therein).

To establish a mathematically rigorous analytical framework, 
\cite{L17} and \cite{LR18} introduced 
spatially explicit discrete models incorporating 
local interactions on general finite connected graphs 
representing social networks. 
By formulating these models as Markov chains, 
they characterized their stationary distributions 
and investigated the convergence of wealth distributions 
through heuristic calculations 
(see also \cite[Section 7.5.3]{L24} for a summary of 
mathematical literature in econophysics). 
More recently, in \cite{S26}, we formulated generalized versions 
of the immediate exchange model, the uniform reshuffling model, 
and the uniform saving model studied in \cite{L17} and \cite{LR18} 
as discrete-time interacting particle systems 
and characterized their stationary distributions. 
Moreover, under an appropriate scaling, we provided a mathematically 
rigorous proof of the convergence of wealth distributions, 
thereby establishing, at a mathematical level, 
the universality of wealth distributions independently of the 
initial configurations and network structures. 

However, many models in econophysics assume that the money 
held by agents remains non-negative. 
In real-world economic systems, the presence of debt in the form 
of borrowing or credit is inevitable. 
To incorporate this aspect, \cite{LR19} and \cite{XDW05} considered models 
in which agents are permitted to incur debt within bank-prescribed 
limits while engaging in economic activities, 
such as  exchanging coins among agents. 
Specifically, the time evolution rule studied in \cite{LR19} is 
structured as follows. 
The model consists of a system of economic agents, 
each characterized by their monetary holdings, or number of coins 
at a given time. 
The process evolves in discrete time. 
At each time step, two agents are selected uniformly at random from 
the population, and one transfers a single coin to the other. 
Even if an agent holds no coins, they are permitted to borrow 
from a central bank in order to carry out the transfer. 
However, the bank operates under a fixed total credit capacity, 
so that the aggregate amount of debt incurred by all agents 
cannot exceed this bound. 
\cite{LR19} formulated this dynamics as a Markov chain, 
characterized its stationary distribution, 
and obtained an explicit elementary expression for it. 
Furthermore, assuming that the marginal distribution 
of the stationary distribution converges to an 
asymmetric Laplace distribution in the infinite-volume limit, 
they studied this limiting distribution and determined its parameters. 
A generalization to multiple banks is also considered in \cite{LR24}. 

The objective of the present paper is to further generalize 
the model introduced by \cite{LR19} and, 
in a spirit similar to \cite{S26}, 
to provide a mathematically rigorous proof of the convergence 
of its limiting wealth distribution under an appropriate scaling. 
As a key generalization of the model, 
we incorporate a stochastic mechanism into the transaction process, 
whereas previous studies assumed that coin transfers between 
two selected agents were always executed 
whenever bank borrowing was permitted. 
Specifically, the transferring agent probabilistically determines  
whether to transfer a coin, 
depending on their current asset or debt level. 
This generalization appears natural from both mathematical 
and economic perspectives.
In particular, from a probability-theoretic viewpoint, 
this model can be viewed as a generalization of the zero-range process. 
Introduced in \cite{S70}, the zero-range process is a 
classical model of interacting 
particle systems consisting of indistinguishable particles 
that randomly jump from site to site on a lattice. 
The rate at which a particle jumps depends solely on the number 
of particles present at its current location. 
By identifying coins with particles, 
the wealth distribution model described above 
naturally falls within this framework. 
More precisely, our model can be viewed as a 
zero-range-type process with integer valued occupation variables, 
where negative occupation numbers represent debts. 
In contrast to the conventional zero-range process, 
in which the occupation variables are restricted to $\mathbb{Z}_{+}$, 
our model allows them to take values in $\mathbb{Z}$.
While the zero-range process is a well-studied interacting particle system 
with widespread applications in various mathematical 
and physical contexts, standard formulations have been restricted 
to non-negative site occupations. 
Thus, zero-range-type models with integer valued site occupations, 
where negative values represent debts, remain largely unexplored. 
This provides a natural probabilistic framework 
for investigating the effects of probabilistic decision-making 
and debt on the macroscopic distribution of wealth. 

\subsection{Model description and result}

Let us state our model precisely. 
In the following, $\mathbb{Z}_+=\{0, 1, 2, \cdots \}$ denotes the set of 
non-negative integers and 
$\mathbb{N}=\{1, 2, 3, \cdots \}$ denotes the set of positive integers. 
Set $\Lambda_N = \{1, 2, \cdots, N\}$, $N\in \mathbb{N}$. 
Each site $x\in \Lambda_N$ corresponds to an agent and 
we assume that the economy can be represented 
by the set of agents $\Lambda_N$. 
$N$ corresponds to the population size. 
For $K, L \in \mathbb{Z}_+$, 
we define the configuration space 
\begin{equation*}
\Omega_N^{K, L}=\bigl\{
\xi = \{\xi(x)\}_{x\in \Lambda_N} \in \mathbb{Z}^{\Lambda_N}; 
\sum\limits_{x\in \Lambda_N}\xi(x) =K, 
\sum\limits_{x\in \Lambda_N} (-\xi(x))I(\xi(x) \leq -1) 
\leq L 
\bigr\}.
\end{equation*}
The amount of wealth or money each agent holds 
is represented by the number of coins, and 
for each $\xi \in \Omega_N^{K, L}$ and $x\in \Lambda_N$, 
we interpret $\xi(x) \in \mathbb{Z}$ not as the number of particles, 
but as the number of coins held by agent $x$, 
where negative values represent debt held by agent $x$. 
$\Omega_N^{K, L}$ represents the set of all configurations 
such that the total wealth is fixed at $K$ and the total debt does 
not exceed $L$. 
$L$ represents the total amount of coins that the bank can lend. 

Next, we introduce the time evolution on $\Omega_N^{K, L}$. 
Let $\Lambda_N^*= \{(x, y); x,y \in \Lambda_N, x\ne y\}$ 
be the set of all directed edges on 
$\Lambda_N$. 
$(x, y)\in \Lambda_N^*$ represents the directed edge 
from $x\in \Lambda_N$ to $y\in \Lambda_N$. 
$\mathcal{P}=\{p(x, y); (x, y) \in \Lambda^*_N\}$ 
denotes a probability distribution on $\Lambda_N^*$ 
and this represents the distribution that determines 
which two agents transfer a coin at each time step. 
In particular, 
$\widetilde{\mathcal{P}}=
\{\frac{1}{\pi(x)}p(x, y); (x, y) \in \Lambda^*_N\}$ 
gives a transition probability of a random walk on 
$\Lambda_N$ where 
$\pi(x)= \sum\limits_{\substack{y\in \Lambda_N \\ y\ne x}}
p(x, y)$, $x\in \Lambda_N$. 
We also take a function $g : \mathbb{Z} \to (0, 1]$. 
For $k \in \mathbb{Z}$, $g(k)$ gives the probability that 
an agent holding $k$ coins passes a coin. 
Then, let $\{X_n\}_{n \geq 0}$ denote the time-homogeneous 
Markov chain on the state space $\Omega_N^{K, L}$. 
Here, given $X_n=\eta \in \Omega_N^{K, L}$, 
the configuration $X_{n+1}$ is determined according to the following rule:
\begin{enumerate} 
\item
Choose a directed edge $(x, y) \in \Lambda_N^*$ 
according to the distribution $\mathcal{P}$. 
\item
Update $X_n$ to $X_{n+1}$ as follows. 
\begin{equation*}
X_{n+1} = 
\begin{cases}
\eta^{x \to y} & \text{ with probability } g(\eta(x)) \text{ if } 
\eta^{x \to y} \in \Omega_N^{K, L}, \\
\eta & \text{ with probability } 1-g(\eta(x)) \text{ if } 
\eta^{x\to y} \in \Omega_N^{K, L}, \\
\eta & \text{ if } 
\eta^{x\to y} \notin \Omega_N^{K, L}. 
\end{cases}
\end{equation*}
\noindent
Here, for given the configuration $\eta\in \mathbb{Z}^{\Lambda_N}$ and 
$x, y\in \Lambda_N$, 
the configuration $\eta^{x\to y}$ is defined by the following. 
\begin{align*}
\eta^{x\to y}(z)= 
\begin{cases}
\eta(x)-1 & \text{ if } z=x, \\
\eta(y)+1 & \text{ if } z=y, \\
\eta(z) & \text{ otherwise}. 
\end{cases}
\end{align*}
\end{enumerate}
This dynamics is exactly a discrete time zero-range-type process 
and can be interpreted in the context of the wealth distribution model 
as follows. 
At each time step, a transfer of a single coin is 
attempted along a randomly chosen directed edge $(x, y) \in \Lambda_N^*$. 
Agent $x$ decides whether to pass one coin to agent $y$ 
with a probability depending on $\eta(x)$, 
the number of coins held by agent $x$. 
Even if agent $x$ holds no coins, 
agent $x$ can transfer a coin to agent $y$ 
by borrowing from the bank. 
However, if passing one coin from agent $x$ to agent $y$ would 
cause the total debt of all agents to exceed $L$, 
the transfer does not take place. 
In this process, the total wealth is conserved 
and the total debt does not exceed $L$. 
It is worth noting that the model studied in \cite{LR19} 
corresponds to the case where $g \equiv 1$.

We first characterize the stationary distributions 
of this Markov chain. 
In the following, 
for a function $g$ on $\mathbb{Z}$ and $k\in \mathbb{Z}$, 
we denote $g(k)!$ by $g(k)! = \prod\limits_{i=1}^k g(i)$ if $k\geq 1$ and 
$g(k)! = \prod\limits_{i=k}^0 g(i)$ if $k\leq 0$. 
Also, we interpret $\frac{1}{g(0)!}$ as $1$ for notational simplicity. 
\begin{prop}\label{prop1}
Let $N \in \mathbb{N}$ and $K, L \in \mathbb{Z}_+$ be fixed. 
We assume that $p$ is symmetric, that is, $p(x, y)=p(y, x)$ 
for every $x, y\in \Lambda_N$ with $x\ne y$, 
and the random walk on $\Lambda_N$ 
determined from the transition probability $\widetilde{\mathcal{P}}$ is 
irreducible. 
Then, there is a unique stationary distribution $\mu_{N}^{K, L}$ 
for $\{X_n\}_{n \geq 0}$ and it is given by 
\begin{equation}\label{stat}
\mu_{N}^{K, L}(\xi) = \frac{1}{Z_{N}^{K, L}}
\prod_{x\in \Lambda_N}\!\! G(\xi(x)),\ 
\xi \in \Omega_N^{K, L}, 
\end{equation}
where 
\begin{equation}\label{margi}
G(k) = 
\begin{cases}
\frac{1}{g(k)!} & \text{ if }\ k \geq 0, \\
{g(k+1)!} & \text{ if }\ k \leq -1, \\
\end{cases}
\end{equation} 
and 
$Z_{N}^{K, L}= \sum\limits_{\xi \in \Omega_N^{K, L}}
\bigl\{\prod\limits_{x\in \Lambda_N} G(\xi(x))\bigr\}$ 
is the normalization factor. 
\end{prop}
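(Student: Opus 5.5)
The plan is to verify detailed balance for $\mu_N^{K,L}$ and then deduce uniqueness from irreducibility on the finite state space $\Omega_N^{K,L}$. Let $P(\eta,\zeta)$ be the one-step transition kernel. For $\eta\ne\zeta$ we have $P(\eta,\zeta)>0$ only if $\zeta=\eta^{x\to y}$ for some $(x,y)\in\Lambda_N^*$ with $\eta^{x\to y}\in\Omega_N^{K,L}$. In that case $x$ and $y$ are determined by $\eta$ and $\zeta$, so
\[
P(\eta,\eta^{x\to y})=p(x,y)\,g(\eta(x)).
\]
Likewise $\eta=\zeta^{y\to x}$, and $\eta\in\Omega_N^{K,L}$, so $P(\zeta,\eta)=p(y,x)\,g(\eta(y)+1)$. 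Since $p$ is symmetric and the product weight differs only at the sites $x$ and $y$, detailed balance $\mu(\eta)P(\eta,\zeta)=\mu(\zeta)P(\zeta,\eta)$ reduces to
\[
G(\eta(x))G(\eta(y))\,g(\eta(x))=G(\eta(x)-1)G(\eta(y)+1)\,g(\eta(y)+1).
\]
This in turn follows from the single-site identity $G(k)\,g(k)=G(k-1)$ for all $k\in\mathbb{Z}$. Applying it at $k=\eta(x)$ on the left and at $k=\eta(y)+1$ on the right turns both sides into $G(\eta(x)-1)G(\eta(y))$.

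The key step is checking $G(k)g(k)=G(k-1)$ case by case from \eqref{margi}.
\begin{itemize}
\item For $k\ge 2$: $G(k)g(k)=g(k)/g(k)!=1/g(k-1)!=G(k-1)$.
\item For $k=1$: $G(1)g(1)=1=G(0)$, using the convention $1/g(0)!=1$.
\item For $k=0$: $G(0)g(0)=g(0)=g(0)!=G(-1)$.
\item For $k\le -1$: $G(k)g(k)=g(k)\prod_{i=k+1}^0 g(i)=\prod_{i=k}^0 g(i)=G(k-1)$.
\end{itemize}
The bookkeeping at $k=0,\pm1$ is the only place where the conventions matter, and it is the step needing care. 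Summing detailed balance over $\eta$ then gives stationarity of $\mu_N^{K,L}$. Note that $G>0$ because $g>0$, and $\Omega_N^{K,L}$ is finite because the debt bound $L$ and the sum constraint $K$ bound every coordinate. Hence $Z_N^{K,L}<\infty$ and $\mu_N^{K,L}$ is a probability measure.

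For uniqueness I show the chain is irreducible on $\Omega_N^{K,L}$; a finite irreducible chain has a unique stationary distribution. Since $g>0$, any allowed move $\eta\to\eta^{x\to y}$ with $p(x,y)>0$ has positive probability. By symmetry of $p$, each such move is reversible within $\Omega_N^{K,L}$, since $\eta^{x\to y}\in\Omega_N^{K,L}$ implies the reverse move is allowed. So it suffices to connect every $\xi$ to a fixed reference configuration $\xi_0$ with no debt, say all $K$ coins at site $1$.

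I would do this by repeatedly moving a coin from a site $x$ with $\xi(x)\ge 1$ toward a site with $\xi<0$, or toward site $1$ if there is no debt. The move goes one edge at a time along a path in the graph $\{p>0\}$, which is connected by irreducibility of $\widetilde{\mathcal P}$. The main obstacle is ensuring that intermediate steps never raise the total debt above $L$. The way around it is to always move a coin out of a site with currently nonnegative holdings. The unit travels along the path by successive hops from a site whose holding just became $\ge 1$, so no intermediate site ever goes negative. Total debt therefore never increases, and it strictly decreases when the coin arrives at an indebted site.

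Iterating, all debts are cleared first. If some site is negative, the total $K\ge 0$ forces some site to be positive, so a transfer is always available. Once there is no debt, all coins are moved to site $1$ in the same way, reaching $\xi_0$. Reversibility then connects any two configurations through $\xi_0$, which completes the proof.
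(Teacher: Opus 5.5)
Your proof is correct and follows the paper's route. You reduce detailed balance, via the symmetry of $p$ and the product form, to the one-site identity $g(k)G(k)=G(k-1)$, and you verify it carefully at the boundary cases $k=0,\pm 1$. The only addition is your explicit irreducibility argument, which the paper simply asserts by analogy with the zero-range process. It works, with one small fix: route each coin only to the \emph{first} indebted site on its path, or else note that an already-indebted intermediate site just returns to its original value after passing the coin on. In either case the intermediate configuration is $\xi-e_{z_0}+e_{z_j}$, and its total debt never exceeds the initial one.
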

\noindent
Note that the choice of $\mathcal{P}$ does not affect 
the stationary distribution. 
Also, when $g \equiv 1$, the function $G$ 
is identically equal to $1$, and (\ref{stat}) coincides with 
the uniform distribution on $\Omega_N^{K, L}$, 
which agrees with the result in \cite{LR19}. 

\begin{remark}
Although we consider the discrete time dynamics, 
the same result holds in the continuous time dynamics. 
That is, consider an infinitesimal generator given by the following. 
\begin{align*}
\mathcal{L}_Nf(\eta)= 
\sum\limits_{(x, y)\in \Lambda_N^*}p(x, y) g(\eta(x))
I(\eta^{x\to y}\in \Omega_N^{K, L})\bigl(
f(\eta^{x\to y}) -f(\eta)\bigr), \ \ \eta\in \Omega_N^{K, L}. 
\end{align*}
Then, the stationary distribution of the corresponding 
continuous time Markov process on 
$\Omega_N^{K, L}$ is given by (\ref{stat}). 
In particular, $g$ does not need to be less than $1$ in this case.
\end{remark}

\begin{remark}
If we assume that $g\equiv 0$ on $\mathbb{Z}_-$, 
then $\{X_n\}_{n\geq 0}$ is the standard zero-range process on 
$\Omega_N^{K,0}$ with a total of $K$ particles, 
and the stationary distribution $\mu_N^{K,0}$ given by (\ref{stat}) 
coincides with the well-known stationary distribution for the 
zero-range process (cf. \cite{KL}).
\end{remark}

Now, we are in the position to state the main result of this paper. 
We assume the following condition for the function $g: \mathbb{Z}\to (0, 1]$.  

\medskip

\noindent
$(g)$ \ 
The function $g$ is eventually monotone on $\mathbb{Z}_+$ and $\mathbb{Z}_-$, 
respectively, and there exist $g^+\in (0, 1]$, $g^-\in (0, 1]$, $\rho>-1$ 
such that 
\begin{align*}
g(k)= g^+\bigl(1-\frac{\rho}{k}+O(\frac{1}{k^2})\bigr) 
\text{ as } k\to\infty 
\text{ and }
g(k)= g^-\bigl(1-\frac{\rho}{k}+O(\frac{1}{k^2})\bigr) 
\text{ as } k\to -\infty. 
\end{align*}

\noindent
Conditions of this kind also arise in studies of condensation 
in the zero-range process 
(cf. \cite{AGL13}, \cite{GSS03}, etc.).
In particular, the condition $(g)$ yields that there exist 
constants $C^+>0$, $C^->0$ such that the following holds. 
\begin{align}\label{const}
\frac{(g^+)^k}{g(k)!} = C^+ k^{\rho}(1+o(1))\ 
\text{ and }\ 
\frac{g(-k+1)!}{(g^-)^k} = C^- k^{\rho}(1+o(1)) 
\text{ as } k\to \infty. 
\end{align}
Next, to study the infinite-volume limit of the system 
under the stationary distribution $\mu_N^{K, L}$, 
we consider the joint limit where both 
the population size $N$ and the average number of 
coins per agent $\frac{K}{N}$ diverge to infinity. 
By analogy with the thermodynamic limit in statistical mechanics, 
these two limits are referred to as the large population limit 
and the large money temperature limit, respectively, 
within the framework of econophysics. 
To mathematically justify this limiting procedure, 
following \cite{S26}, 
we fix constants $\kappa, r > 0$ and a sequence of positive real 
numbers $\{a_N\}$ diverging to infinity, 
and set $K_N = \kappa a_N N$ and $L_N = r \kappa a_N N$. 
We then consider the limit of the distribution of the field 
scaled by a factor of $\frac{1}{a_N}$ under $\mu_N^{K_N, L_N}$. 
In particular, since 
$\frac{1}{N}\sum\limits_{x\in \Lambda_N} \bigl( \frac{1}{a_N}\xi(x) \bigr) 
= \frac{K_N}{N a_N} = \kappa$ for $\xi \in \Omega_N^{K_N, L_N}$, 
$\kappa$ represents the average number of coins per agent 
of the scaled field, 
while $r = \frac{L_N}{K_N}$ denotes the ratio of 
the maximum number of coins the bank can lend 
to the total number of coins held by all agents. 

Under the above setting, we can prove that the law of the scaled field 
$\bigl\{ \frac{1}{a_N}\xi(x)\bigr\}_{x\in \Lambda_N}$ under 
$\mu_{N}^{K, L}$ converges 
to the i.i.d. product of 
probability distributions on $\mathbb{R}$. 
Its marginal distribution varies 
significantly depending on the 
asymptotic behavior of the function $g$. 
In the following, 
a function $f$ on $\mathbb{R}^{\mathbb{N}}$ is 
called local if it depends only on finitely many coordinates. 
\begin{thm}\label{thm1}
Assume the condition $(g)$. 
Set $K_N=\kappa a_N N$ and $L_N=r \kappa a_N N$
where $\kappa, r>0$ and 
$\{a_N\}_{N\geq 1}$ be a sequence which satisfies 
$\lim\limits_{N\to \infty}a_N=\infty$ and 
$a_N=o(N^\beta)$ for some $\beta>0$. 
Then, for every bounded continuous local function 
$f:\mathbb{R}^A \to \mathbb{R}$, it holds that 
\begin{align*}
\lim_{N\to \infty}
E^{\mu_{N}^{K_N, L_N}} \Bigl[ f\bigl( 
\frac{\cdot}{a_N} \bigr)\Bigr] 
= 
E^{\overline{\mu}_{\kappa, r}^A}\bigl[f(\, \cdot\, )\bigr], 
\end{align*}
where $A$ is a finite subset of $\mathbb{N}$ and 
$\overline{\mu}_{\kappa, r}^A$ denotes 
the product probability measure on 
$\mathbb{R}$ whose one site marginal distribution $\mu_{\kappa, r}$ 
is given by the following. 
\begin{enumerate}[$(i)$]
\item
Case $g^+ \leq g^-$. 
\begin{align*}\label{limdist}
\mu_{\kappa, r}(dt) = 
\frac{1}{\Gamma(\rho +1) \{C^+ (\theta^+)^{\rho +1}
+ C^- (\theta^-)^{\rho +1}\}}
\Bigl\{ C^+ t^\rho e^{-\frac{1}{\theta^+}t}  1_{\{t \geq 0\}} + 
C^- |t|^\rho e^{-\frac{1}{\theta^-}|t|} 
1_{\{t < 0\}} \Bigr\}dt, 
\end{align*}
where $C^+, C^-$ are positive constants determined from the 
relation (\ref{const}) and 
$(\theta^+, \theta^-)$ is the unique solution of the following equation 
in $(0, \infty)\times (0, \infty)$. 
\begin{equation}\label{lim}
\begin{cases}
& \frac{(\rho+1) C^+(\theta^+)^{\rho + 2}}
{C^+(\theta^+)^{\rho + 1}+ C^-(\theta^-)^{\rho + 1}}
= (1+r)\kappa, \\
& \frac{(\rho+1) C^-(\theta^-)^{\rho + 2}}
{C^+(\theta^+)^{\rho + 1}+ C^-(\theta^-)^{\rho + 1}}
= r \kappa.
\end{cases}
\end{equation}
$\Gamma(\beta)= \int_0^\infty t^{\beta-1}e^{-t}dt$, $\beta>0$ 
denotes the Gamma function. 
\item
Case $g^+ > g^-$. 
\begin{align*}
\mu_{\kappa, r}(dt) = 
\frac{1}{\Gamma(\rho +1) \theta^{\rho +1}}
t^\rho e^{-\frac{1}{\theta}t} 1_{\{t \geq 0\}} dt, 
\end{align*}
where $\theta = \frac{\kappa }{\rho +1}$. 
\end{enumerate}
\end{thm}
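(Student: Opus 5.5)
This is an equivalence-of-ensembles argument: we introduce a grand-canonical product measure that reproduces $\mu_N^{K_N,L_N}$ after conditioning, and then prove a local limit theorem for the two conserved quantities, the total wealth and the total debt.

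\textbf{Grand-canonical measures and reduction to a ratio.} For parameters $\lambda^+<1/g^+$ and $\lambda^-<1/g^-$ let $\nu=\nu_{\lambda^+,\lambda^-}$ be the product measure on $\mathbb{Z}^{\Lambda_N}$ with marginal
\begin{equation*}
\nu(k)\propto G(k)(\lambda^+)^{k}\ \text{for } k\ge 0, \qquad \nu(k)\propto G(k)(\lambda^-)^{|k|}\ \text{for } k<0 .
\end{equation*}
Write $S=\sum_x \xi(x)$ and $D=\sum_x \xi(x)^-$.

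Under $\nu$, conditioning on $\{S=K,\ D\le L\}$ gives back $\mu_N^{K,L}$. This holds for every choice of $\lambda^\pm$, because on that event the weight $(\lambda^+)^{S+D}(\lambda^-)^{D}$ depends only on $D$. So we stratify by the value of $D$.

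For a local $f$ depending on the sites in $A$, we write
\begin{equation*}
E^{\mu_N}[f]=\frac{\sum_{d\le L}\sum_{\zeta} f(\zeta/a_N)\,\nu^A(\zeta)\,\nu^{\Lambda_N\setminus A}\bigl(S=K-s(\zeta),\,D=d-d(\zeta)\bigr)\,w(d)}{\sum_{d\le L} w(d)\,\nu^{\Lambda_N}(S=K,\,D=d)} .
\end{equation*}
Here the weights $w(d)$ undo the $\lambda$-tilt. The parameters are then tuned, depending on $N$, so that the tilted law has mean $K_N/N=\kappa a_N$ and debt per site concentrated near $L_N/N$ or below.

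\textbf{Scaling of the marginals.} By (\ref{const}), taking $\lambda^\pm g^\pm = 1-1/(\theta^\pm a_N)$ makes the marginal of $\xi/a_N$ converge to the two-sided Gamma density $C^\pm |t|^\rho e^{-|t|/\theta^\pm}$. The normalisation is
\begin{equation*}
\Gamma(\rho+1)\bigl(C^+(\theta^+)^{\rho+1}+C^-(\theta^-)^{\rho+1}\bigr)a_N^{\rho+1},
\end{equation*}
the mean is $\propto C^+(\theta^+)^{\rho+2}-C^-(\theta^-)^{\rho+2}$, and the mean debt is $\propto C^-(\theta^-)^{\rho+2}$. The system (\ref{lim}) is exactly the statement that, with total positive wealth $(1+r)\kappa$, the mean is $\kappa$ and the mean debt is $r\kappa$.

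The first step is therefore to show that in case (i) the debt constraint is asymptotically saturated, $D\approx L_N$. The uniqueness of $(\theta^+,\theta^-)$ should follow from monotonicity: the ratio of the two equations fixes $\theta^+$ as an increasing function of $\theta^-$, and the remaining equation is monotone in $\theta^-$.

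\textbf{Why the two cases differ.} Shifting $m$ units of debt onto one agent and $m$ units of wealth onto another changes the weight by roughly $(g^+g^-)^{-m}\cdot(g^-)^{m}/(g^+)^{m}$ relative to the baseline rates. Comparing the entropy gain of spreading debt against the weight penalty shows the following.
\begin{itemize}
\item If $g^+\le g^-$, debt is favoured and the largest allowed debt, $D=L_N$, dominates, giving case (i). The effective rates inside $\nu$ are $1/g^\pm$ corrected by $a_N^{-1}$; when $g^+<g^-$ a Lagrange multiplier on $D$ absorbs the factor $(g^+/g^-)^{D}$, which is increasing in $D$.
\item If $g^+>g^-$, the factor $(g^+/g^-)^{-D}$ decays exponentially in $D$. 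Then $D=O(N)=o(a_NN)$, so the negative part vanishes after scaling by $a_N$, and only the one-sided Gamma with mean $\kappa$ survives, i.e.\ $\theta=\kappa/(\rho+1)$.
\end{itemize}
To make this rigorous, I would bound $\mu_N(D\ge \varepsilon a_N N)$ by an explicit ratio of partition sums, using a large-deviation estimate under $\nu$.

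\textbf{Main obstacle.} The key technical step is a uniform two-dimensional local limit theorem for $(S,D)$ under the triangular array $\nu^{\Lambda_N\setminus A}$. Its variances are of order $a_N^2 N$, and it must hold uniformly over shifts of size $O(a_N)$. I would prove it by characteristic-function estimates on the lattice $\mathbb{Z}^2$:
\begin{itemize}
\item for small frequencies, a Taylor expansion using uniform third moments coming from (\ref{const});
\item for frequencies away from zero, a non-lattice-type bound on the marginal characteristic function. This is where the eventual monotonicity in $(g)$ enters, giving smoothness of $G$ at scale $a_N$.
\end{itemize}
The polynomial growth condition $a_N=o(N^\beta)$ is needed to control the intermediate frequency range $|t|\sim 1/a_N$ with $N$ factors.

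Once the local limit theorem is available, the numerator and denominator ratio in the first display converges to $\nu^A$. Summing over $d$ near $L_N$ (case (i)) or over small $d$ (case (ii)), together with weak convergence of the scaled marginals, gives the theorem.
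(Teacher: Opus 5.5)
Your overall architecture matches the paper's:
\begin{itemize}
\item separate tilts on the positive and negative parts;
\item a decomposition of $\mu_N^{K_N,L_N}$ over the debt level, with the tilt removed by $w(d)=(\lambda^+\lambda^-)^{-d}$ (the mixture over $\widetilde{\mu}_N^{K,l}$ combined with (\ref{key1}));
\item saturation of the debt constraint in case (i) and negligible debt in case (ii);
\item a local limit theorem proved via characteristic functions.
\end{itemize}
However, two statements in your set-up are wrong.

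First, since $G(k)=1/g(k)!\approx C^+k^{\rho}(g^+)^{-k}$ for $k\ge0$, the positive tilt must satisfy $\lambda^+<g^+$, with $\lambda^+/g^+=1-1/(\theta^+a_N)$. Your choice $\lambda^+g^+=1-1/(\theta^+a_N)$ makes the positive part of $\nu$ non-normalisable unless $g^+=1$; only the negative side has the form $\lambda^-g^-=1-1/(\theta^-a_N)$.

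Second, conditioning on $\{S=K,\,D\le L\}$ returns $\mu_N^{K,L}$ only when $\lambda^+\lambda^-=1$. This is impossible in case (i), since $\lambda^+<g^+\le g^-<1/\lambda^-$. What is tilt-free is conditioning on $\{S=K,\,D=d\}$. So your stratified display with $w(d)$ is the correct identity, and the increasing weights $w(d)$ are exactly what produce saturation.

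\textbf{The genuine gap: saturation in case (i).} The step $D\approx L_N$ is what puts $r$ into the limit, but you support it only heuristically. Your heuristic, an exponential preference $(g^-/g^+)^{D}$, gives nothing when $g^+=g^-$. There saturation comes only from the $O(1/a_N)$ deficit of $\lambda^+\lambda^-$ below $1$. The argument that works for all $g^+\le g^-$ is Lemma \ref{lem2}:
\begin{itemize}
\item Tune $(\lambda^+,\lambda^-)$ so that $E\xi^+=(1+r)\kappa a_N$ and $E\xi^-=r\kappa a_N$ hold \emph{exactly}. This is a two-dimensional inversion (Lemma \ref{lem1}). A leading-order choice is not enough: the local limit theorem only sees the point $(K_N+L_N,L_N)$ if the means are right to within $O(a_N/\sqrt N)$ per site.
\item Then the level weights $h_N(d)=\mu_N^{K_N,L_N}(D=d)$ satisfy $h_N(d)\le(\lambda^+\lambda^-)^{L_N-d}/\overline{\nu}^{N}(S^+=K_N+L_N,\,S^-=L_N)$.
\item Here $\lambda^+\lambda^-\le\frac{g^+}{g^-}(1-\frac{c}{a_N})\le 1-\frac{c}{a_N}$, and Theorem \ref{lclt} bounds the denominator below by $c\,(Na_N^2)^{-1}$.
\end{itemize}
Hence $\mu_N^{K_N,L_N}(D\le L_N-t_N)\le CN^2a_N^3e^{-ct_N/a_N}\to0$ once $t_N/a_N\gg\log N$ (recall $a_N=o(N^{\beta})$). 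Taking also $t_N=o(a_N\sqrt N)$ makes the Gaussian factor in the local limit theorem $1+o(1)$ uniformly in $d\in(L_N-t_N,L_N]$. That uniformity is what your ratio argument needs.

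\textbf{Case (ii).} This case does not fit your stratified scheme. There the negative part at a site has all moments tending to $0$, so stratifying by small $d$ would need a local limit theorem with a degenerate debt coordinate. Your two-dimensional theorem, with both variances $\asymp a_N^2N$, does not provide this. Also, a bound on $\mu_N^{K_N,L_N}(D\ge\varepsilon a_NN)$ alone does not identify the one-site marginal. Instead:
\begin{itemize}
\item Use the single tilt $\lambda^-=1/\lambda^+$ with $\lambda^+\in(g^-,g^+)$. For this tilt your unstratified conditioning identity is exactly true. Tune $E\xi=\kappa a_N$.
\item Bound $\overline{\nu}^{N}(D\ge L_N)\le e^{-\gamma L_N+CNa_N^{-(\rho+1)}}$ by exponential Chebyshev with $e^{\gamma}g^-/g^+<1$. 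This is negligible against $\overline{\nu}^{N}(S=K_N)\asymp(\sqrt N a_N)^{-1}$.
\item Use a one-dimensional local limit theorem for $S$ (Lemma \ref{lem5}, Theorem \ref{lclt3}).
\end{itemize}

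\textbf{The local limit theorem.} Your plan matches Section 3, and your attention to $\|t\|\sim1/a_N$ is well placed. With variances $\asymp a_N^2$, the quadratic Taylor bound holds only for $\|t\|\lesssim 1/a_N$. Section 3 states it on a fixed ball $\|t\|\le\gamma_0$, where it cannot hold uniformly in $N$. The remaining frequencies therefore need two further ingredients:
\begin{itemize}
\item convergence of $f(s/a_N)$ to the limiting characteristic function on compacts;
\item summation by parts on the part of $\nu^{\pm}$ above $\varepsilon a_N$, where $|\nu^{\pm}(k+1)-\nu^{\pm}(k)|\le Ca_N^{-1}\nu^{\pm}(k)$.
\end{itemize}
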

\noindent
Note that $\mu_{\kappa, r}$ does not depend on $r$ 
when $g^+>g^-$. 

As an easy consequence of Proposition \ref{prop1} 
and Theorem \ref{thm1}  we obtain the following. 
For a finite set $A$, we denote the number of elements 
in $A$ by $|A|$. 
\begin{cor}\label{cor1}
Under the assumptions in Proposition \ref{prop1} and 
Theorem \ref{thm1}, we have 
\begin{align*}
\lim_{N\to \infty}\lim_{n\to \infty}
\max\limits_{\eta \in \Omega_N^{K_N, L_N}}
\Bigl\{
E_{\eta} \Bigl[ \frac{1}{N}
\bigm|\!\!\bigl\{x\in \Lambda_N; 
\frac{1}{a_N} {X_n (x)} \in (b, c) \bigr\}
\!\!\bigm|\Bigr] \Bigr\}
= \mu_{\kappa, r}((b, c)), 
\end{align*}
for every $-\infty \leq b < c \leq \infty$ 
where $E_{\eta}[\ \cdot \ ]$ denotes the expectation 
with respect to the law of the Markov chain with the 
initial condition $\eta$. 
\end{cor}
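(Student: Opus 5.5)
The plan is to combine the ergodic theorem for the finite Markov chain with Theorem \ref{thm1}, applied with $A$ a single site.

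\emph{Step 1: reduce the time limit to a stationary expectation.} Fix $N$. The expectation inside the braces is a finite average over sites:
\begin{align*}
E_{\eta}\Bigl[\frac{1}{N}\bigl|\{x\in\Lambda_N;\tfrac{1}{a_N}X_n(x)\in(b,c)\}\bigr|\Bigr]
=\frac{1}{N}\sum_{x\in\Lambda_N}P_\eta\bigl(\tfrac{1}{a_N}X_n(x)\in(b,c)\bigr).
\end{align*}
By Proposition \ref{prop1}, $\{X_n\}$ is an irreducible Markov chain on the finite set $\Omega_N^{K_N,L_N}$ with unique stationary law $\mu_N^{K_N,L_N}$. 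The chain is also aperiodic. To see this, pick any $\eta$ and any edge with $p(x,y)>0$. If $\eta^{x\to y}\notin\Omega_N^{K_N,L_N}$, or if $g(\eta(x))<1$, then the chain stays put with positive probability. Otherwise the return path $\eta\to\eta^{x\to y}\to\eta$ has length $2$. Together with a return path of odd length (or a direct self-loop) this gives period $1$. If $g\equiv1$ and moves are always allowed, a self-loop can instead be found at a configuration where the debt constraint binds: any move by an agent with $\eta(x)\le 0$ would push the total debt past $L_N$.

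Even if aperiodicity failed, we could use the Cesàro version of the ergodic theorem, and the order of limits would be unaffected. With aperiodicity and finiteness, $P_\eta(X_n=\cdot)\to\mu_N^{K_N,L_N}$ as $n\to\infty$ for each $\eta$. Because there are finitely many initial states $\eta$, the maximum over $\eta$ passes through the limit in $n$. The inner limit therefore equals
\[
\frac{1}{N}\sum_{x}\mu_N^{K_N,L_N}\bigl(\tfrac{1}{a_N}\xi(x)\in(b,c)\bigr).
\]

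\emph{Step 2: use exchangeability.} The weight \eqref{stat} is a product of the same factor $G$ at every site, and the set $\Omega_N^{K_N,L_N}$ is invariant under permutations of sites. So every one-site marginal is the same, and the quantity from Step 1 equals $\mu_N^{K_N,L_N}\bigl(\tfrac{1}{a_N}\xi(1)\in(b,c)\bigr)$.

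\emph{Step 3: pass to the limit $N\to\infty$.} This is the main obstacle, because the indicator $1_{(b,c)}$ is not continuous and Theorem \ref{thm1} only covers bounded continuous $f$. Theorem \ref{thm1} with $A=\{1\}$ gives weak convergence of the law of $\xi(1)/a_N$ to $\mu_{\kappa,r}$. In both cases $(i)$ and $(ii)$, $\mu_{\kappa,r}$ is absolutely continuous with respect to Lebesgue measure, so $\mu_{\kappa,r}(\{b\})=\mu_{\kappa,r}(\{c\})=0$. The set $(b,c)$ is therefore a $\mu_{\kappa,r}$-continuity set, and the portmanteau theorem gives convergence to $\mu_{\kappa,r}((b,c))$. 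The cases $b=-\infty$ or $c=\infty$ are included, since the boundary is then at most one point.

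If one prefers to avoid portmanteau, sandwich $1_{(b,c)}$ between continuous functions $0\le f_\varepsilon^-\le 1_{(b,c)}\le f_\varepsilon^+\le 1$ that differ only on $\varepsilon$-neighbourhoods of $b$ and $c$. Apply Theorem \ref{thm1} to $f_\varepsilon^\pm$, then let $\varepsilon\to0$ using the fact that $\mu_{\kappa,r}$ has a density.
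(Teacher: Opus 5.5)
Your proof is correct and follows the paper's argument. The paper also uses the Markov chain convergence theorem, taking irreducibility and aperiodicity from the proof of Proposition \ref{prop1}, then exchangeability to reduce to site $1$, then Theorem \ref{thm1} together with ``basic facts about weak convergence''; your portmanteau step, justified by the absolute continuity of $\mu_{\kappa,r}$, is exactly what that phrase covers. One small fix to your aperiodicity aside: a transfer from one debtor to another leaves the total debt unchanged, so the self-loop should be exhibited at a configuration with a single debtor carrying all $L_N$ of the debt. Also drop the Ces\`aro remark, since for a periodic chain the $\lim_{n}$ in the statement need not exist.
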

Corollary \ref{cor1} demonstrates the convergence of 
the asymptotic wealth distribution, that is, 
the proportion of agents holding 
a specific number of coins or a specific amount of debt, 
as time approaches infinity, and in the large population and the 
large money temperature limits. 
Under the condition $(g)$, 
when $g^+ \leq g^-$, the support of the limiting distribution 
is the entire real line $\mathbb{R}$, 
whereas when $g^+ > g^-$, its support is restricted to the positive side 
$\mathbb{R}_+$. 
In the latter case, this implies that the proportion of 
agents holding debt becomes zero. 
Intuitively, this can be understood as follows: 
when $g^+ > g^-$, agents in debt are less likely to transfer 
coins compared to those holding asset (positive number of coins). 
Consequently, coins flow more readily from coin holders 
to agents in debt, implying that debt is eliminated 
in the stationary state under the scaling limit.
On the other hand, when $g^+ \leq g^-$, 
it follows from (\ref{lim}) that the mean of the 
negative part under $\mu_{\kappa,r}$ is given by
$\int_{-\infty}^0 (-t)\mu_{\kappa,r}(dt)=r\kappa$. 
This means that, when agents in debt are at least as likely as coin holders 
to transfer a coin, the stationary state corresponds to a 
situation in which the system as a whole accumulates 
the maximum possible amount of debt. 
We also note that the limiting distribution is either 
a two-sided Gamma distribution or a Gamma distribution, 
and their shape parameters are determined by 
the second leading-order term of the limit of 
the function $g$. 

When $g$ is constant on the positive and negative sides 
respectively, the condition $(g)$ holds with $\rho = 0$, 
and the limiting distribution becomes an 
asymmetric Laplace distribution or an exponential distribution. 
Specifically, consider the case where $g \equiv 1$. 
The condition $(g)$ is satisfied with $g^+ = g^- = 1$, $\rho = 0$ 
and $C^+ = C^- = 1$. 
Then, the limiting distribution $\mu_{\kappa, r}$ yields 
an asymmetric Laplace distribution whose 
positive-side and negative-side parameters are 
given by 
$\frac{1}{\theta^+}=\frac{1}{\kappa}
\bigl(1-\sqrt{\frac{r}{r+1}}\bigr)$ and 
$\frac{1}{\theta^-}=\frac{1}{\kappa}
\bigl(\sqrt{\frac{r+1}{r}}-1\bigr)$, respectively. 
This agrees with the heuristic calculations given in \cite[Section 8]{LR19}. 
Our result rigorously formulates and generalizes 
earlier studies \cite{LR19}, \cite{XDW05} 
which relied on numerical simulations and heuristic arguments, 
thereby settling their conjecture with a mathematically rigorous proof. 
Furthermore, all previous works focused solely on the case $g \equiv 1$. 
We demonstrate for the first time that 
introducing probabilistic decision-making 
based on the asset or debt level of each agent leads to 
a two-sided Gamma distribution or a Gamma distribution 
as the limiting distribution, 
which is consistent with numerical simulations. 

To illustrate these theoretical results, 
we present in Figure 1 numerical simulation results for a single realization 
of the Markov chain $\{X_n\}_{n\geq 0}$. 
We set the number of agents to $N=10^4$, 
the total number of coins to $K=10^6$, 
and the maximum amount of coins that the bank can lend to $L=10^5$. 
Thus, the average number of coins per 
agent is $\kappa =100$, and $r=\frac{L}{K}=0.1$.  
The initial condition is set to a constant configuration 
$X_0 \equiv 100$. 
We take $\mathcal{P}$ to be the uniform distribution on $\Lambda_N^*$, 
namely, each $(x, y)\in \Lambda_N^*$ is chosen with 
probability $\frac{1}{N(N-1)}$. 
$g^+$ and $g^-$ be either $0.6$ or $0.7$ and 
we consider the following two cases for the function $g$. 
\begin{itemize}
\item{Case (i)}:\ 
$g(k) = g^+$ for $k\geq 1$ and $g(k)=g^-$ for $k\leq 0$. 
\item{Case (ii)}:\ 
$g(k) = g^+\bigl(1-\frac{2}{k}\bigr)$ for $k\geq 7$, 
$g(k)=g^+$ for $k\in \{1, 2, \cdots, 6\}$, 
$g(k) = g^-\bigl(1-\frac{2}{k}\bigr)$ for $k\leq -5$ and 
$g(k)=g^-$ for $k\in \{-4, -3, \cdots, 0\}$. 
\end{itemize}
In particular, the condition $(g)$ holds with 
$\rho=0$ and $C^+=C^-=1$ in Case (i), 
and $\rho=2$ and $C^+=C^-=\frac{1}{30}$ in Case (ii). 
In Case (ii), the function $g$ is also defined so that 
$g(k)\in(0,1)$ for every $k\in\mathbb{Z}$. 
When $C^+=C^-$, 
${\theta^+}$ and ${\theta^-}$ in (\ref{lim}) are 
given by 
${\theta^+}=\frac{\kappa}{\rho+1}
\bigl\{1+r + r^{\frac{\rho+1}{\rho+2}}(1+r)^{\frac{1}{\rho+2}}\bigr\}$ 
and 
${\theta^-}=\frac{\kappa}{\rho+1}
\bigl\{r + r^{\frac{1}{\rho+2}}(1+r)^{\frac{\rho+1}{\rho+2}}\bigr\}$, 
respectively. 

\begin{figure}[H]
\begin{tabular}{cc}
\begin{minipage}[t]{0.5\hsize}
\centering
\includegraphics[keepaspectratio, scale=0.35]{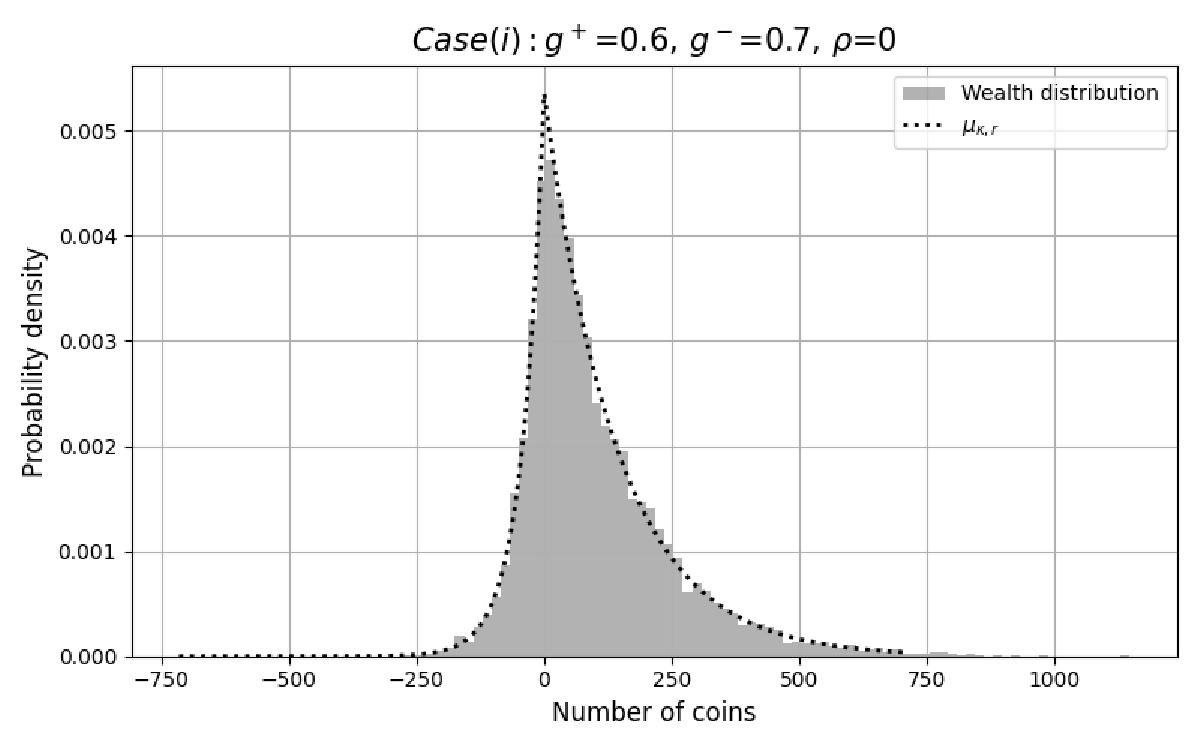}
\end{minipage} &
\begin{minipage}[t]{0.5\hsize}
\includegraphics[keepaspectratio, scale=0.34]{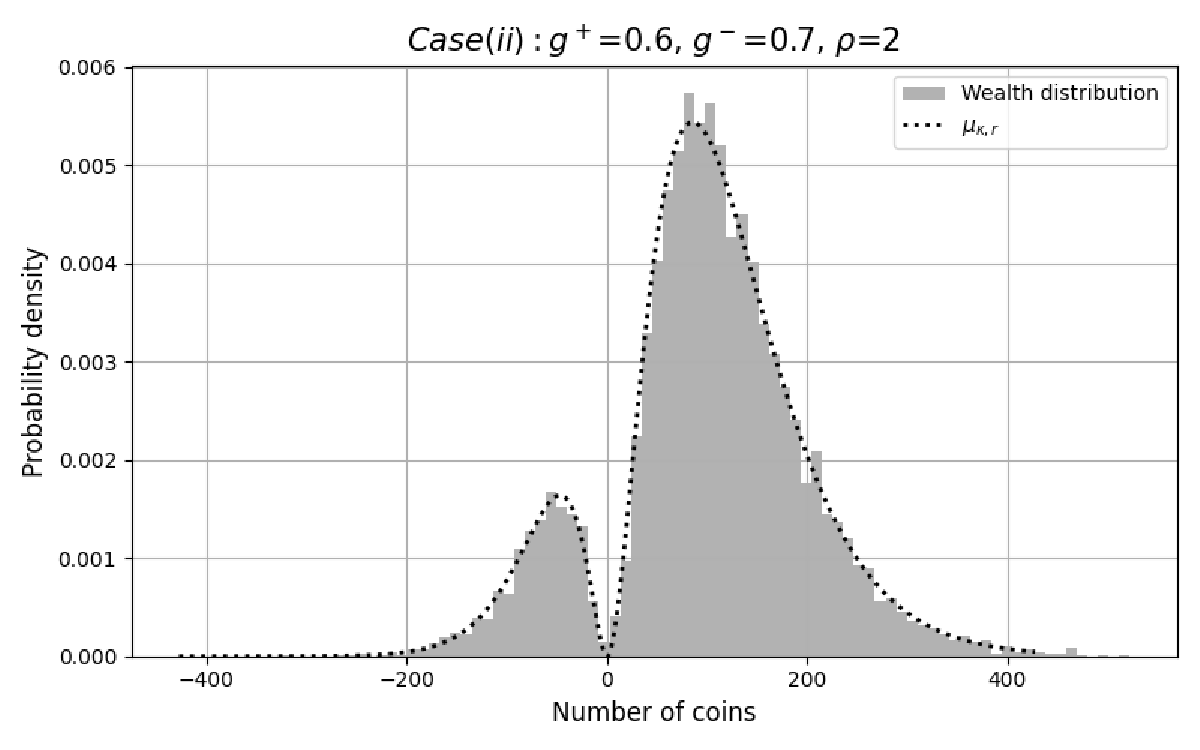}        
\end{minipage} \\
\begin{minipage}[t]{0.5\hsize}
\centering
\includegraphics[keepaspectratio, scale=0.34]{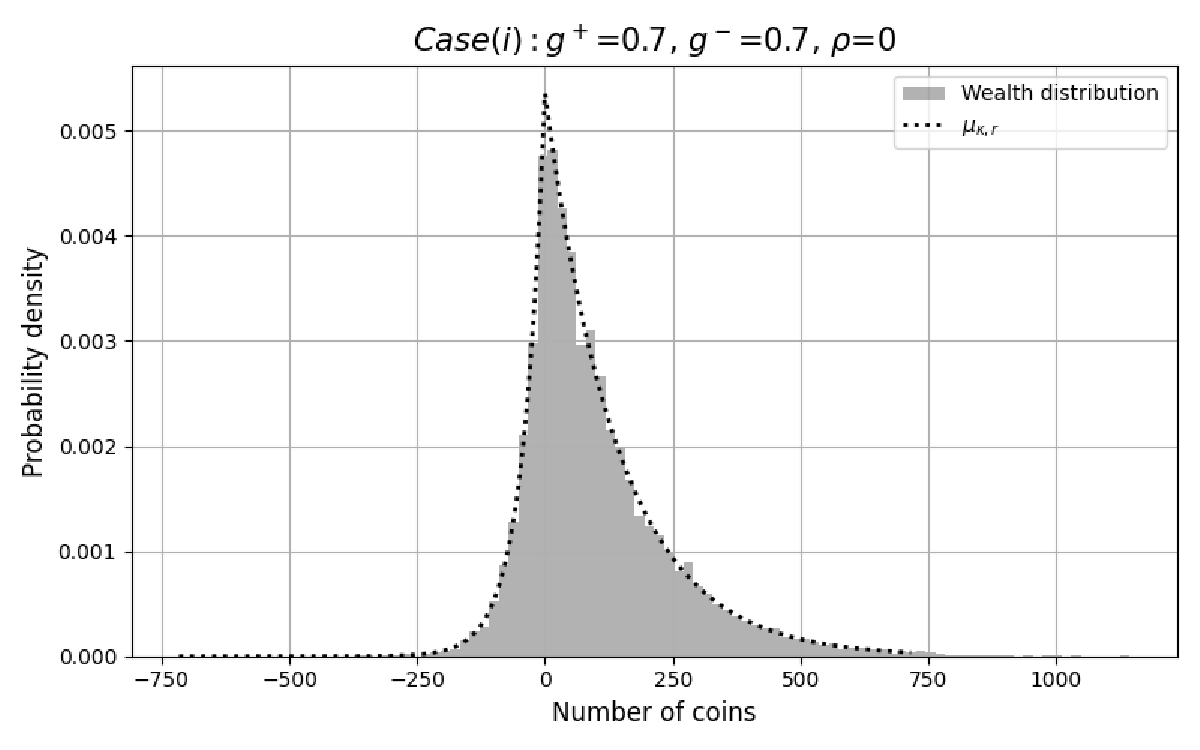}        
\end{minipage} &
\begin{minipage}[t]{0.5\hsize}
\includegraphics[keepaspectratio, scale=0.34]{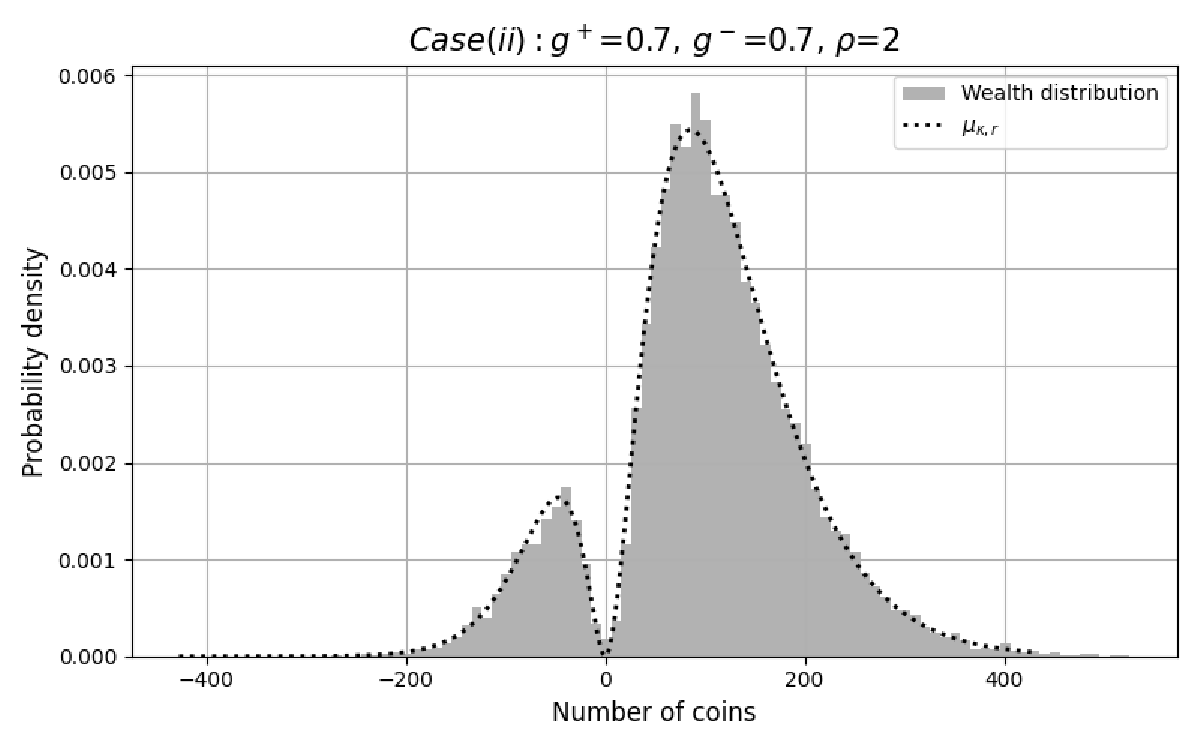}        
\end{minipage} \\
\begin{minipage}[t]{0.5\hsize}
\centering
\includegraphics[keepaspectratio, scale=0.34]{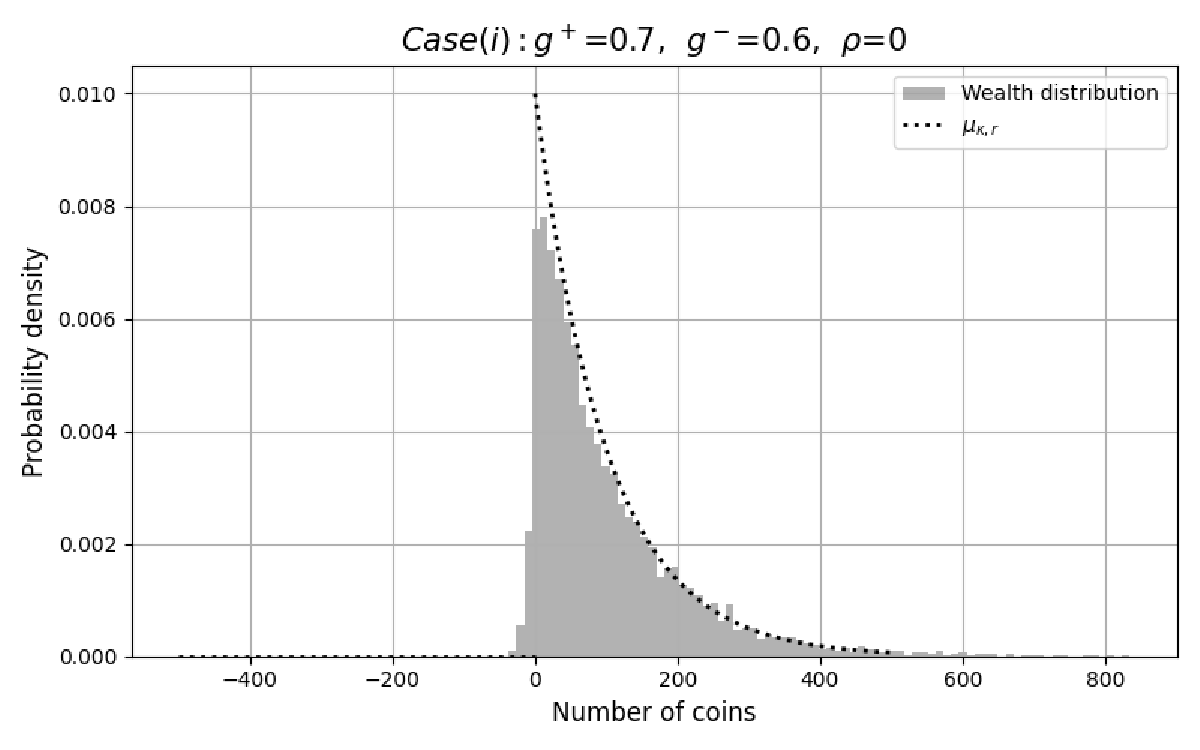}        
\end{minipage} &
\begin{minipage}[t]{0.5\hsize}
\includegraphics[keepaspectratio, scale=0.34]{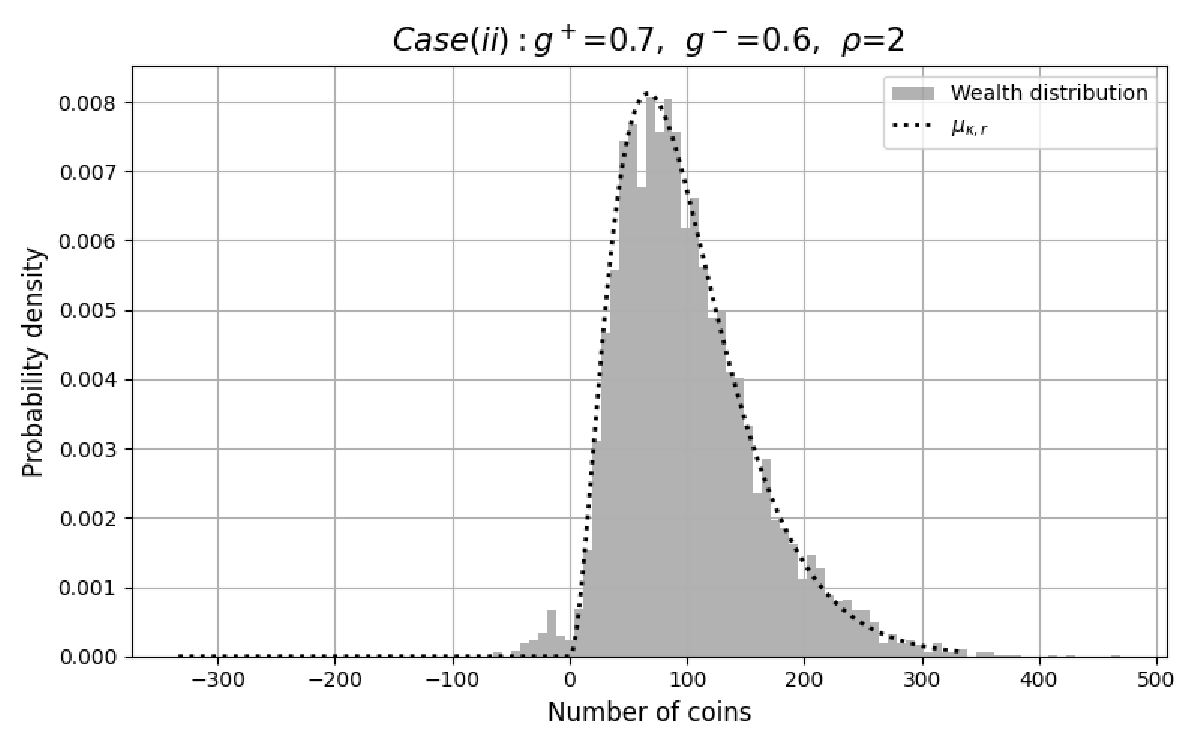}        
\end{minipage} 
\end{tabular}
\caption{Simulation results for a 
single realization of the Markov chain $\{X_n\}_{n\geq 0}$. 
The left and right columns show the simulation results 
for Case (i) and Case (ii), respectively. 
From top to bottom, the values of $(g^+, g^-)$ 
are $(0.6, 0.7)$, $(0.7, 0.7)$, and $(0.7, 0.6)$. 
The gray histograms represent 
the wealth distribution, i.e. 
the proportion of agents holding 
a specific number of coins or a specific amount of debt 
after $n=10^9$ updates. 
The dotted line represents the probability density function 
corresponding to $\mu_{\kappa,r}$ given in Theorem \ref{thm1}. 
The top two rows show the probability density functions 
of the asymmetric Laplace distribution or the two-sided Gamma distribution, 
while the bottom row shows those of the exponential distribution 
or the Gamma distribution. 
Simulations were implemented in Python.
}
\end{figure}

Next, we give some comments about the strategy of the proof. 
The convergence of the marginal distributions of $\mu_{N}^{K, L}$ 
is closely related to 
the equivalence of ensembles (cf. \cite{FV}, \cite{KL}). 
The main difference from the usual equivalence of ensembles is that, 
in addition to the conservation of the total quantity 
$\sum\limits_{x\in\Lambda_N}\xi(x)$, 
we impose the constraint 
$\sum\limits_{x\in\Lambda_N}(-\xi(x))I(\xi(x)\leq -1)\leq L_N$. 
In the proof of the equivalence of ensembles in our setting, 
it is important to consider the exponential family of probability 
distributions with weight $G$ given in (\ref{margi}). 
When $g^+>g^-$, 
there exists $\phi>0$ such that 
$\sum\limits_{k\in\mathbb{Z}}\phi^k G(k)<\infty$. 
Under the corresponding grand canonical measure, we can show that 
the constraint 
$\sum\limits_{x\in\Lambda_N}(-\xi(x))I(\xi(x)\leq -1)\leq L_N$ 
has an asymptotically negligible effect on the probability that 
$\sum\limits_{x\in\Lambda_N}\xi(x)=K_N$. 
Consequently, 
noting that the scaling in our setting differs from the usual one, 
the proof of equivalence of ensembles 
for the canonical measures conditioned on 
$\sum\limits_{x\in\Lambda_N}\xi(x)$ can be carried out 
in a manner similar to that in \cite{S26}. 
On the other hand, 
when $g^+ \leq g^-$, 
there is no $\phi>0$ satisfying 
$\sum\limits_{k\in\mathbb{Z}}\phi^k G(k)<\infty$. 
To address this difficulty, 
in this case, we consider a grand canonical measure 
obtained by applying different exponential tiltings 
to the positive and negative parts, separately. 
In addition, 
a simple but crucial observation is that $\mu_N^{K,L}$ 
can be represented as a mixture of canonical measures conditioned 
on the two quantities $\sum\limits_{x\in\Lambda_N}\xi(x)$ and 
$\sum\limits_{x\in\Lambda_N}(-\xi(x))I(\xi(x)\leq -1)$,
see (\ref{dec}) below. 
Then, Theorem \ref{thm1} is proved by combining detailed estimates 
of the corresponding weights with the proof of 
the equivalence of ensembles 
for canonical measures conditioned on these two quantities.
In particular, as mentioned above, 
the use of a scaling different from the standard one 
leads us to consider a triangular array of random variables. 
Consequently, a local limit theorem for the joint distribution of the sums 
of the positive and negative parts of the random variables 
in this triangular array plays a crucial role throughout the proof.
Equivalence of ensembles results for other particle system models 
with two conserved quantities, as well as their applications, 
have been studied in \cite{F14}.

\begin{remark}
As a further generalization of the model, 
one can consider the case where the exponent $\rho$ in the condition 
$(g)$ differs between $k \to \infty$ and $k \to -\infty$, 
namely under the following condition: 
there exist $g^+\in (0, 1]$, $g^-\in (0, 1]$, $\rho^+>-1$, $\rho^->-1$, 
such that 
\begin{align*}
g(k)= g^+\bigl(1-\frac{\rho^+}{k}+O(\frac{1}{k^2})\bigr) 
\text{ as } k\to\infty 
\text{ and }
g(k)= g^-\bigl(1-\frac{\rho^-}{k}+O(\frac{1}{k^2})\bigr) 
\text{ as } k\to -\infty. 
\end{align*}
When $g^+>g^-$, exactly the same argument as in the proof of the 
theorem below applies directly, 
showing that the limiting distribution is given by Theorem \ref{thm1} 
Case (ii) with $\rho$ replaced by $\rho^+$.
On the other hand, when $g^+\leq g^-$, 
if different exponential tiltings are applied separately to 
the positive and negative parts as described above, 
the asymptotic behaviors of the corresponding generating functions 
differ when $\rho^+\neq\rho^-$, see (\ref{q+}) below. 
Consequently, one side asymptotically dominates the other, 
and the proof of the theorem below does not work in this case.
Although numerical simulations suggest that a 
two-sided Gamma distribution whose positive-side and 
negative-side shape parameters are determined by 
$\rho^+$ and $\rho^-$, respectively, 
arises as the limiting distribution, 
we have not yet been able to prove this rigorously.
\end{remark}

In the rest of the paper 
we provide the proof of Theorem \ref{thm1} 
in Sections 2 and 3. 
We introduce the notation that will be used in the subsequent sections. 
For two sequences of positive numbers $\{a_n\}$ and $\{b_n\}$, 
$a_n\sim b_n$ means that $\lim\limits_{n\to \infty} 
\frac{a_n}{b_n}=1$, and 
$a_n \asymp b_n$ means that 
there exist constants $0<C \leq C' <\infty$ such that 
$C \leq \liminf\limits_{n\to \infty} 
\frac{a_n}{b_n} \leq 
\limsup\limits_{n\to \infty}\frac{a_n}{b_n} \leq C'$. 
$E^\mu[\, \cdot\, ]$ denotes 
the expectation with respect to the probability measure $\mu$. 
For a random variable $X$, 
$X^{+}=\max\{X, 0\}$ and 
$X^{-}=\max\{-X, 0\}$
denote the positive and negative part of $X$, respectively. 
Throughout the paper $C$, $C'$, $C''$ represent positive constants 
that do not depend on the size of the system $N$, 
but may depend on other parameters. 
These constants in various 
estimates may change from place to place in the paper. 


\section{Proof of the main theorem}
\subsection{Proof of Proposition \ref{prop1}}
\begin{proof}[Proof of Proposition \ref{prop1}]
Similar to the standard zero-range process, 
under the condition that the random walk on $\Lambda_N$ 
determined from the transition probability $\widetilde{\mathcal{P}}$ is 
irreducible, 
$\{X_n\}_{n\geq 0}$ is an irreducible and aperiodic 
Markov chain on the finite state space $\Omega_N^{K, L}$. 
All we have to do is to characterize the unique stationary distribution. 
Actually we show that $\mu_N^{K, L}$ defined by (\ref{stat}) 
satisfies the detailed balance condition: 
\begin{equation}\label{db}
\mu_{N}^{K, L}(\eta) P(X_{n+1} = \xi | X_n=\eta) 
=
\mu_{N}^{K, L}(\xi) P(X_{n+1} = \eta | X_n=\xi), 
\end{equation}
for every $\xi, \eta\in \Omega_N^{K, L}$. 
Thus, $\mu_N^{K,L}$ is a reversible distribution for 
$\{X_n\}_{n\geq0}$.
By the definition of the dynamics, it suffices to prove 
(\ref{db}) for every 
$\eta\in \Omega_{N}^{K, L}$ and 
$\xi = \eta^{x\to y}\in \Omega_N^{K, L}$, 
where $(x, y)\in \Lambda_N^*$. 
Since $\mu_N^{K,L}$ is defined as a product measure and 
$p$ is symmetric, it is enough to show that
\begin{align*}
g(\eta(x))G(\eta(x))G(\eta(y))= 
g(\xi(y))G(\xi(x))G(\xi(y))= 
g(\eta(y)+1)G(\eta(x)-1)G(\eta(y)+1). 
\end{align*}
This follows from the definition of $G$ in 
(\ref{margi}), which gives 
$g(k)\frac{G(k)}{G(k-1)}=1$ 
for every $k\in \mathbb{Z}$. 
\end{proof}

\subsection{Proof of Theorem \ref{thm1}; the case $g^+ \leq g^-$}
In this subsection we assume that $g^+ \leq g^-$. 
For the proof of Theorem \ref{thm1}, we prepare several notations. 
For $N \in \mathbb{N}$ and $K, L \in \mathbb{Z}_+$, 
we define the configuration space where both of 
the total wealth and the total debt are preserved. 
\begin{equation*}
\widetilde{\Omega}_N^{K, L}=\bigl\{
\xi = \{\xi(x)\}_{x\in \Lambda_N} \in \mathbb{Z}^{\Lambda_N}; 
\sum\limits_{x\in \Lambda_N}\xi(x) =K, 
\sum\limits_{x\in \Lambda_N} (-\xi(x))I(\xi(x) \leq -1) 
= L \bigr\}.
\end{equation*}
Then, the corresponding canonical measure is 
defined by 
\begin{equation}\label{stat2}
\widetilde{\mu}_{N}^{K, L}(\xi) = 
\frac{1}{\widetilde{Z}_{N}^{K, L}}
\bigl\{\prod_{x\in \Lambda_N}\!\! G(\xi(x))\bigr\} 
I \bigl(\xi \in \widetilde{\Omega}_N^{K, L} \bigr),\  
\xi \in \mathbb{Z}^{\Lambda_N}, 
\end{equation}
where $G$ is given by (\ref{margi}) and 
$\widetilde{Z}_{N}^{K, L}= 
\sum\limits_{\xi \in \widetilde{\Omega}_N^{K, L}}
\bigl\{\prod\limits_{x\in \Lambda_N} G(\xi(x))\bigr\}$ 
is the normalization factor. 
Since ${\Omega}_N^{K, L}$ can be expressed as the disjoint union 
$
{\Omega}_N^{K, L} = \bigcup\limits_{l=0}^L 
\widetilde{\Omega}_N^{K, l}$, 
we have the decomposition: 
\begin{align}\label{dec}
\mu_N^{K, L}(\xi)= \sum\limits_{l=0}^L h_N(l) 
\widetilde{\mu}_N^{K, l}(\xi),\  \xi \in \Omega_N^{K, L}, 
\end{align} 
where 
$
h_N(l)= \frac{\widetilde{Z}_{N}^{K, l}}
{{Z}_{N}^{K, L}}$. 
In particular, 
$\{ h_N(l)\}_{0\leq l \leq L}$ provides a probability weight 
on $\{0, 1, \cdots, L\}$. 

Next, we introduce the grand canonical measure corresponding to (\ref{stat2}). 
Though the exponential tilting (Laplace transform) 
of the probability weight is 
one of the basic tool of the analysis of the stationary distribution of 
the zero-range process (cf. \cite{KL}), there is no $\phi >0$ 
satisfying $\sum\limits_{k\in \mathbb{Z}}\phi^k G(k)<\infty$ 
when $g^+ \leq g^-$. 
Therefore, we consider exponential tiltings of the positive part and 
negative part of $G$, separately. 
For $n \in \mathbb{Z}_+$, 
we define 
$Q_n^+(\phi)=\sum\limits_{k\geq 0} k^n \phi^k G(k)
= \sum\limits_{k\geq 0} k^n \phi^k \frac{1}{g(k)!}$ 
and 
$Q_n^-(\psi)=\sum\limits_{k\leq -1} |k|^n \psi^{|k|} G(k)
= \sum\limits_{k\geq 1} k^n \psi^k {g(-k+1)!}$.  
By the condition $(g)$, 
$Q_n^+(\phi)$ converges if $\phi \in (0, g^+)$ and 
$Q_n^-(\psi)$ converges if $\psi \in (0, \frac{1}{g^-})$. 
In this case, we can define the probability measure 
$\nu_{\phi, \psi}$ on $\mathbb{Z}$ by the following. 
\begin{equation}\label{nu}
\nu_{\phi, \psi}(k)= \frac{1}{Z(\phi, \psi)}\phi^{kI(k\geq 0)}
\psi^{(-k)I(k\leq -1)}G(k),\ \ k\in \mathbb{Z}, 
\end{equation}
where $G$ is given by (\ref{margi}) and 
$Z(\phi, \psi)= Q_0^+(\phi)+Q_0^-(\psi)$ is the normalization 
factor. 
For $A \subset \mathbb{N}$, 
$\overline{\nu}_{\phi, \psi}^A$ 
be the product measure on 
$\mathbb{Z}^A$ whose one site marginal distribution equals to 
$\nu_{\phi, \psi}$. 
We denote the corresponding measure as $\overline{\nu}_{\phi, \psi}^N$ 
when $A=\Lambda_N$. 
Then, we have 
\begin{equation}\label{key1}
\overline{\nu}_{\phi, \psi}^{N}
(\widetilde{\Omega}_N^{K, L} ) 
= \frac{1}{Z(\phi, \psi)^N}\phi^{K+L}\psi^L  
\widetilde{Z}_{N}^{K, L}, 
\end{equation}
for every $K, L\in \mathbb{Z}_+$. 
In particular, we have the following key identity for (\ref{stat2}). 
\begin{equation}\label{key2}
\widetilde{\mu}_{N}^{K, L}(\, \cdot\,) = 
\overline{\nu}_{\phi, \psi}^{N}
\bigl(\, \cdot\, \bigm| \widetilde{\Omega}_N^{K, L} \bigr). 
\end{equation}
Notice that the right-hand side does not depend on the 
choice of the parameters $\phi$ and $\psi$. 

By the condition $(g)$, 
it holds that 
$\lim\limits_{\phi \uparrow g^+}Q_n^+(\phi) = \infty$ 
and 
$\lim\limits_{\psi \uparrow \frac{1}{g^-}}Q_n^-(\psi) = \infty$ 
for every $n\in \mathbb{Z}_+$. 
To investigate the behavior of 
$Q_n^+(\phi)$ and $Q_n^-(\psi)$ in more detail,
we use a Tauberian theorem of the following form 
(cf. {\cite[Corollary 1.7.3]{BGT}}). 
\begin{thm}
Let $\{a_k\}_{k\geq 0}$ be a sequence of non-negative 
numbers and assume that 
$A(s)=\sum\limits_{k=0}^\infty a_k s^k$ converges 
for $s\in [0, 1)$ and $\{a_k\}_{k\geq 0}$ is monotone. 
Then, the following are equivalent. 
\begin{itemize}
\item
$A(s) \sim \Gamma(\beta +1) (1-s)^{-\beta} h(\frac{1}{1-s})$ 
as $s \uparrow 1$ for $\beta >0$ and slowly varying function $h$. 
\item
$a_k \sim \beta k^{\beta-1} h(k)$ as $k\to \infty$ 
for $\beta >0$ and slowly varying function $h$. 
\end{itemize}
\end{thm}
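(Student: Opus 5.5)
The plan is to reduce both directions to Karamata's Tauberian theorem for partial sums and then pass to the sequence itself via a monotone density argument. Set $U(x)=\sum_{0\leq k\leq x}a_k$ for $x\geq 0$, and write $s=e^{-\lambda}$ with $\lambda\downarrow 0$. Then
\begin{align*}
A(e^{-\lambda})=\int_{[0,\infty)}e^{-\lambda x}\,dU(x).
\end{align*}
Since $1-s=1-e^{-\lambda}\sim\lambda$ and $h$ is slowly varying, $h(\frac{1}{1-s})\sim h(\frac{1}{\lambda})$. Hence the first condition is equivalent to
\begin{align*}
\int e^{-\lambda x}dU(x)\sim\Gamma(\beta+1)\lambda^{-\beta}h(1/\lambda)\quad\text{as }\lambda\downarrow 0.
\end{align*}
The intermediate statement I aim for is $U(n)\sim n^{\beta}h(n)$ as $n\to\infty$. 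I will show that each of the two conditions in the theorem is equivalent to it.

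\emph{Step 1 (transform and partial sums).} For the Abelian direction, assume $U(x)\sim x^\beta h(x)$. Integrating by parts gives $\int e^{-\lambda x}dU(x)=\lambda\int_0^\infty e^{-\lambda x}U(x)\,dx$. Substituting $x=t/\lambda$ and dividing by $\lambda^{-\beta}h(1/\lambda)$, the integrand converges pointwise to $e^{-t}t^{\beta}$. Potter's bounds for regularly varying functions give an integrable dominating function, so dominated convergence yields the limit $\int_0^\infty e^{-t}t^\beta dt=\Gamma(\beta+1)$.

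For the Tauberian direction, I use Karamata's argument. Define the measures $\mu_t(dx)=dU(tx)/(t^\beta h(t))$ on $[0,\infty)$. The hypothesis together with slow variation of $h$ gives
\begin{align*}
\int e^{-\lambda x}\mu_t(dx)\to\Gamma(\beta+1)\lambda^{-\beta}=\int e^{-\lambda x}\beta x^{\beta-1}dx
\end{align*}
for every fixed $\lambda>0$. The continuity theorem for Laplace transforms of measures then gives vague convergence of $\mu_t$ to $\beta x^{\beta-1}dx$. Since this limit has no atom at $1$, we get $\mu_t([0,1])\to1$, that is, $U(t)\sim t^\beta h(t)$.

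\emph{Step 2 (partial sums and terms).} If $a_k\sim\beta k^{\beta-1}h(k)$, then summing gives $U(n)\sim n^\beta h(n)$. This follows from Karamata's theorem on integrals of regularly varying functions, which applies because $\beta>0$. For the converse I use monotonicity of $\{a_k\}$. Suppose $a_k$ is nondecreasing and fix $\varepsilon>0$. Then
\begin{align*}
\varepsilon n\, a_n\leq U((1+\varepsilon)n)-U(n)+O(a_n).
\end{align*}
Dividing by $n^{\beta}h(n)$, the right-hand side tends to $(1+\varepsilon)^\beta-1$, since $h$ is slowly varying. A matching lower bound comes from the interval $((1-\varepsilon)n,n]$. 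Dividing by $\varepsilon$ and letting $\varepsilon\to0$ gives $n a_n/(n^\beta h(n))\to\beta$.

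The nonincreasing case (which forces $\beta\leq1$) is symmetric, with the two inequalities interchanged. There $a_n\to 0$ or stays bounded, so the $O(a_n)$ boundary terms are negligible against $n^{\beta}h(n)$, which tends to infinity. If $n^{\beta}h(n)$ does not tend to infinity, the same boundary terms are handled by directly bounding them by $\varepsilon n a_n$-type quantities.

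The main obstacle is the Tauberian half of Step 1. It requires the continuity theorem for Laplace transforms of (non-finite) measures. One must check that the normalized measures are locally bounded, which follows from $\mu_t([0,x])\leq e^{\lambda x}\int e^{-\lambda y}\mu_t(dy)$, so that the sequence is vaguely precompact. The limit is then identified uniquely through its Laplace transform. The monotone density step is elementary once the $O(a_n)$ edge terms are controlled.
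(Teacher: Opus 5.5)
Your proof is correct. The paper gives no proof of this statement; it quotes it as Corollary 1.7.3 of Bingham--Goldie--Teugels. Your argument (Karamata's Tauberian theorem for $U(x)=\sum_{k\leq x}a_k$, followed by a monotone density step) is essentially the standard proof behind that corollary.

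Two minor points should be tidied.

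\begin{itemize}
\item In the nondecreasing case you discard the boundary term without saying why. You need $a_n=o(n^{\beta}h(n))$, and this follows from your own inequality: $(\varepsilon n-1)a_n\leq U((1+\varepsilon)n)=O(n^{\beta}h(n))$.
\item Your closing hedge about $n^{\beta}h(n)$ not tending to infinity is vacuous. Since $\beta>0$, $n^{\beta}h(n)\to\infty$ for every slowly varying $h$.
\end{itemize}
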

\noindent
By this theorem and (\ref{const}), 
the following asymptotics holds. 
\begin{align}\label{q+}
\begin{split}
Q_n^+(\phi) 
& \sim 
C^+ \Gamma(\rho + n+1)
\bigl(1- \frac{\phi}{g^+}\bigr)^{-(\rho +n+1)}\  
\text{ as } \phi \uparrow g^+, \\
Q_n^-(\psi) 
& \sim 
C^- \Gamma(\rho + n+1)
\bigl(1- g^- {\psi}\bigr)^{-(\rho +n+1)}\  
\text{ as } \psi \uparrow \frac{1}{g^-}. 
\end{split}
\end{align}

Next, 
we show that, given the values of $E^{\nu_{\phi, \psi}}[\xi^+]$ and 
$E^{\nu_{\phi, \psi}}[\xi^-]$, 
there exists a unique pair $(\phi, \psi)$ realizing them.
\begin{lem}\label{lem1}
Let $\kappa^+, \kappa^->0$ and 
$\{a_N\}_{N \geq 1}$ be an increasing 
sequence of positive numbers 
which satisfies $\lim\limits_{N\to \infty}a_N=\infty$. 
There exists a unique pair of 
two increasing sequences of positive numbers 
$\{\phi_N\}_{N \geq 1}$ and $\{\psi_N\}_{N \geq 1}$ 
which satisfy $E^{\nu_{\phi_N, \psi_N}}[\xi^+] = \kappa^+ a_N$ and 
$E^{\nu_{\phi_N, \psi_N}}[\xi^-] = \kappa^- a_N$ 
for every $N\geq 1$. 
Moreover, $\{\phi_N\}_{N \geq 1}$ and $\{\psi_N\}_{N \geq 1}$ 
satisfy 
$1-\frac{\phi_N}{g^+} \sim \frac{1}{\theta^+ a_N}$, 
$1-g^- {\psi_N} \sim \frac{1}{\theta^- a_N}$ as $N\to \infty$ 
where 
$(\theta^+, \theta^-)$ is 
the unique solution of the following equation 
in $(0, \infty)\times (0, \infty)$. 
\begin{eqnarray}\label{th1}
\begin{cases}
& \frac{(\rho+1) C^+(\theta^+)^{\rho + 2}}
{C^+(\theta^+)^{\rho + 1}+ C^-(\theta^-)^{\rho + 1}}
= \kappa^+, \\
& \frac{(\rho+1) C^-(\theta^-)^{\rho + 2}}
{C^+(\theta^+)^{\rho + 1}+ C^-(\theta^-)^{\rho + 1}}
= \kappa^-.
\end{cases}
\end{eqnarray}
\end{lem}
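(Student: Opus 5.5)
The plan is to treat the two moment conditions as a map $F:(0,g^+)\times(0,1/g^-)\to(0,\infty)^2$,
\begin{equation*}
F(\phi,\psi)=\Bigl(\frac{Q_1^+(\phi)}{Q_0^+(\phi)+Q_0^-(\psi)},\ \frac{Q_1^-(\psi)}{Q_0^+(\phi)+Q_0^-(\psi)}\Bigr)=\bigl(E^{\nu_{\phi,\psi}}[\xi^+],E^{\nu_{\phi,\psi}}[\xi^-]\bigr).
\end{equation*}
For existence and uniqueness at fixed $N$, I would reparametrize by $u=\log\phi$, $v=\log\psi$. Then $\log Z(e^u,e^v)$ is the cumulant generating function of the pair $(\xi^+,\xi^-)$ under $\nu$, since $\phi^{\xi^+}\psi^{\xi^-}$ is exactly the tilt. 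Hence $\nabla\log Z=F$, and the Hessian is the covariance matrix of $(\xi^+,\xi^-)$ under $\nu_{\phi,\psi}$. This covariance is positive definite because $\xi^+$ and $\xi^-$ are not affinely dependent: both are nonconstant, and they are not proportional on the support $\mathbb{Z}$. So $\log Z$ is strictly convex and $\nabla\log Z$ is injective, which gives uniqueness.

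For surjectivity onto $(0,\infty)^2$, I would use the boundary behaviour. As $(u,v)$ tends to $-\infty$ in a coordinate, the corresponding moment tends to $0$. As $\phi\uparrow g^+$ or $\psi\uparrow 1/g^-$, the asymptotics \eqref{q+} show $Q_1^\pm/Q_0^\pm\to\infty$. Then a standard Legendre-duality argument applies: $\log Z$ is a steep convex function on the open box, so its gradient image is the interior of the convex support of $(\xi^+,\xi^-)$, namely $(0,\infty)^2$. Alternatively one can argue directly: for fixed $\psi$, the first component is strictly increasing in $\phi$. Monotonicity of $\phi_N,\psi_N$ in $N$ would then follow from the explicit asymptotic parametrization below, or from convexity via monotone dependence along rays $(\kappa^+a,\kappa^-a)$. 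I expect that to need a short extra argument, possibly only eventual monotonicity.

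For the asymptotics, since $a_N\to\infty$, at least one of $\phi_N\uparrow g^+$ or $\psi_N\uparrow 1/g^-$ must hold. I would first show both do. Suppose $\phi_N$ stays bounded away from $g^+$ along a subsequence. Then $Q_0^+$ and $Q_1^+$ stay bounded, so $E[\xi^+]$ stays bounded, contradicting $E[\xi^+]=\kappa^+a_N\to\infty$; the same argument handles $\psi_N$. Set $\varepsilon_N=1-\phi_N/g^+$ and $\delta_N=1-g^-\psi_N$. By \eqref{q+}, with $\Gamma(\rho+2)=(\rho+1)\Gamma(\rho+1)$,
\begin{equation*}
\kappa^+a_N\sim\frac{(\rho+1)C^+\varepsilon_N^{-(\rho+2)}}{C^+\varepsilon_N^{-(\rho+1)}+C^-\delta_N^{-(\rho+1)}},\qquad
\kappa^-a_N\sim\frac{(\rho+1)C^-\delta_N^{-(\rho+2)}}{C^+\varepsilon_N^{-(\rho+1)}+C^-\delta_N^{-(\rho+1)}}.
\end{equation*}
Writing $\varepsilon_N=1/(s_Na_N)$ and $\delta_N=1/(t_Na_N)$, the factor $a_N$ cancels by homogeneity: the right-hand sides equal $a_N$ times the left sides of \eqref{th1} evaluated at $(s_N,t_N)$. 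So $H(s_N,t_N)\to(\kappa^+,\kappa^-)$, where $H$ denotes the map defined by the left side of \eqref{th1}.

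The main obstacle is the last step: showing that \eqref{th1} has a unique solution and that $H(s_N,t_N)\to(\kappa^+,\kappa^-)$ forces $(s_N,t_N)\to(\theta^+,\theta^-)$. This in turn requires a priori bounds showing $(s_N,t_N)$ stays in a compact subset of $(0,\infty)^2$. For this I would solve \eqref{th1} explicitly. Dividing the two equations gives $C^+s^{\rho+2}/(C^-t^{\rho+2})=\kappa^+/\kappa^-$, so $t=\lambda s$ with $\lambda>0$ uniquely determined. Substituting back, the first equation becomes $c\,s=\kappa^+$ with an explicit constant $c>0$, so $s$ is unique. Then $H$ restricted to rays $t=\lambda s$ is linear in $s$, and the ratio of its components is a strictly monotone function of $\lambda$. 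Consequently $H$ is a homeomorphism of $(0,\infty)^2$ onto itself with a continuous inverse. Convergence of $H(s_N,t_N)$ then gives convergence of the ratio $\lambda_N$, and hence of $s_N$, to the solution, with no separate compactness argument needed.

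The one remaining technical point is that \eqref{q+} holds only as an $\sim$ relation, while $\varepsilon_N$ and $\delta_N$ both tend to $0$. The $(1+o(1))$ errors therefore multiply terms of possibly different orders. This is harmless because every term is positive, so the relative errors combine into a single $(1+o(1))$ factor.
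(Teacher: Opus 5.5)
Your proposal is correct, and it argues differently from the paper in both halves.

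\textbf{Existence and uniqueness.} The paper lists the boundary limits of $F=(F_1,F_2)$ and the positivity of $\partial F_1/\partial\phi$, $\partial F_2/\partial\psi$ and $\det DF$. It then concludes directly that $F$ is a bijection, which tacitly relies on a global inversion theorem. You instead recognize $F$ as the gradient of $\log Z$ in the coordinates $(\log\phi,\log\psi)$. Injectivity then comes from strict convexity: the covariance of $(\xi^+,\xi^-)$ is positive definite because $G>0$ on all of $\mathbb{Z}$. The exact image $(0,\infty)^2$ comes from standard exponential-family theory. It is worth saying explicitly that the family is regular: $Q_0^+(g^+)=Q_0^-(1/g^-)=\infty$ because $\rho>-1$, so the natural parameter set is exactly the open box and steepness is automatic.

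\textbf{Asymptotics.} The paper passes through $Q_1^+(\phi_N)\asymp Q_1^-(\psi_N)$ and asserts $1-\phi_N/g^+\sim 1/(\theta^+a_N)$, $1-g^-\psi_N\sim 1/(\theta^-a_N)$ for some $\theta^\pm$. Only afterwards does it identify $\theta^\pm$ through \eqref{th1}. Your route is more direct: rescale by $\varepsilon_N=1/(s_Na_N)$, $\delta_N=1/(t_Na_N)$, use homogeneity to reduce to $H(s_N,t_N)\to(\kappa^+,\kappa^-)$, and apply the explicit continuous inverse of $H$. This gives existence of the limits and their identification in one step, with no subsequence or compactness argument, and it fills in what the paper leaves implicit.

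\textbf{Monotonicity.} The one loose end is the claim that $\{\phi_N\}$ and $\{\psi_N\}$ are increasing, which is part of the statement. The paper's proof does not address it either. Neither mechanism you suggest works as stated:
\begin{itemize}
\item The asymptotics only give $1-\phi_N/g^+=(1+o(1))/(\theta^+a_N)$, which cannot order consecutive terms when $a_{N+1}/a_N\to 1$.
\item Strict convexity alone does not make the inverse gradient componentwise monotone along a positive ray. It can fail, e.g., for a quadratic whose Hessian has a large positive off-diagonal entry.
\end{itemize}
What works is the sign of the covariance. Since $\xi^+\xi^-=0$, the Hessian $\Sigma$ of $\log Z$ in $(\log\phi,\log\psi)$ has off-diagonal entry $\mathrm{Cov}(\xi^+,\xi^-)=-E[\xi^+]E[\xi^-]<0$. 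By the inverse function theorem, the solution of $F=(\kappa^+a,\kappa^-a)$ satisfies
\[
\frac{d}{da}\begin{pmatrix}\log\phi\\ \log\psi\end{pmatrix}
=\Sigma^{-1}\begin{pmatrix}\kappa^+\\ \kappa^-\end{pmatrix},
\qquad
\Sigma^{-1}=\frac{1}{\det\Sigma}\begin{pmatrix}\mathrm{Var}(\xi^-) & E[\xi^+]E[\xi^-]\\ E[\xi^+]E[\xi^-] & \mathrm{Var}(\xi^+)\end{pmatrix},
\]
and both components are strictly positive. Hence $\phi$ and $\psi$ are strictly increasing functions of $a$. So $\phi_N,\psi_N$ increase with $a_N$ for every $N$, not merely eventually.
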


\begin{proof}
Set $F_1(\phi, \psi) := \frac{Q_1^+(\phi)}
{Q_0^+(\phi) + Q_0^-(\psi)}, 
F_2(\phi, \psi) := \frac{Q_1^-(\psi)}
{Q_0^+(\phi) + Q_0^-(\psi)}$ and 
consider a map 
$F=(F_1, F_2):(0, g^+)\times (0, \frac{1}{g^-}) \to 
(0, \infty)\times (0, \infty)$. 
Then, it is easy to see that the following holds. 
\begin{itemize}
\item
$\lim\limits_{\phi \downarrow 0}F_1(\phi, \psi)=0$ for every 
$\psi \in (0, \frac{1}{g^-})$ and 
$\lim\limits_{\psi \downarrow 0}F_2(\phi, \psi)=0$ for every 
$\phi \in (0, {g^+})$. 
\item
$\lim\limits_{\phi \uparrow g^+}F_1(\phi, \psi)=\infty$ for every 
$\psi \in (0, \frac{1}{g^-})$ and 
$\lim\limits_{\psi \uparrow \frac{1}{g^-}}F_2(\phi, \psi)=
\infty$ for every $\phi \in (0, {g^+})$. 
\item
$F$ is $C^1$ and 
$\frac{\partial F_1}{\partial \phi}>0$, 
$\frac{\partial F_2}{\partial \psi}>0$, 
$\mathrm{det}DF=
\frac{\partial F_1}{\partial \phi}\frac{\partial F_2}{\partial \psi}
-\frac{\partial F_1}{\partial \psi}\frac{\partial F_2}{\partial \phi}>0$ 
for every $(\phi, \psi)$. 
\end{itemize}
Therefore $F$ is a bijection and there exists a 
unique pair 
$(\{\phi_N\}_{N \geq 1}, \{\psi_N\}_{N \geq 1})$ 
which satisfies $E^{\nu_{\phi_N, \psi_N}}[\xi^+]  
= F_1(\phi_N, \psi_N) =\kappa^+ a_N$ 
and $E^{\nu_{\phi_N, \psi_N}}[\xi^-] = 
F_2(\phi_N, \psi_N) = \kappa^- a_N$ 
for every $N\geq 1$. 
Then, we have $Q_1^+(\phi_N) \asymp Q_1^-(\psi_N)$ 
as $N\to \infty$ 
since $F_1(\phi_N, \psi_N)$ and 
$F_2(\phi_N, \psi_N)$ are of the same order. 
Combining these observations with (\ref{q+}), 
we obtain 
$1-\frac{\phi_N}{g^+} \sim \frac{1}{\theta^+ a_N}$ 
and $1-g^- {\psi_N} \sim \frac{1}{\theta^- a_N}$ 
as $N\to \infty$ for some $\theta^+, \theta^->0$. 
Substituting these relations back into equations 
$F_1(\phi_N, \psi_N) =\kappa^+ a_N$, 
$F_2(\phi_N, \psi_N) = \kappa^- a_N$ 
and applying (\ref{q+}) again,
we see that $(\theta^+, \theta^-)$ satisfies equation 
(\ref{th1}) which 
can be solved explicitly, 
and the uniqueness of the solution follows as well.

\end{proof}

Based on Lemma \ref{lem1}, we consider 
sequences $\{\phi_N\}_{N\geq 1}$ and 
$\{\psi_N\}_{N\geq 1}$ that satisfy 
$1-\frac{\phi_N}{g^+} \sim \frac{1}{\lambda^+ a_N}$
and 
$1-g^-{\psi_N} \sim \frac{1}{\lambda^- a_N}$ 
as $N\to \infty$ 
where 
$\lambda^+, \lambda^- > 0$ and 
$\{a_N\}_{N\geq 1}$ be an increasing 
sequence of positive numbers which satisfies 
$\lim\limits_{N\to \infty}a_N = \infty$. 
For each $N\in \mathbb{N}$, let 
$\{X_j^{(N)}\}_{j\in \Lambda_N}$ be a family of 
independent and identically distributed 
$\mathbb{Z}$-valued random variables 
with common distribution $\nu_{\phi_N, \psi_N}$ given by 
(\ref{nu}). 
In the following, 
we denote 
$X_j^{(N)}$, $(X_j^{(N)})^+$ and $(X_j^{(N)})^-$ 
as 
$X_j$, $X_j^{+}$ and $X_j^{-}$ 
for notational simplicity. 
We set 
$m_N^+ = E[X_1^{+}]$, 
$m_N^- = E[X_1^{-}]$, 
$v_N^+ = \mathrm{Var}(X_1^{+})$, 
$v_N^- = \mathrm{Var}(X_1^{-})$. 
Then, the following asymptotics holds by (\ref{q+}). 
\begin{align}\label{mv}
\begin{split}
Z(\phi_N, \psi_N) & = 
Q_0^+(\phi_N) + Q_0^-(\psi_N) 
= (B_0^+(\lambda^+)+B_0^-(\lambda^-))(a_N)^{\rho+1}(1+o(1)), \\
m_N^+& = m_N^+(\lambda^+, \lambda^-) 
= \frac{Q_1^+(\phi_N)}{Z(\phi_N, \psi_N)} 
= \frac{B_1^+(\lambda^+)}{B_0^+(\lambda^+)+B_0^-(\lambda^-)}
a_N(1+o(1)), \\
m_N^- & = m_N^-(\lambda^+, \lambda^-) 
= \frac{{Q}_1^-(\psi_N)}{Z(\phi_N, \psi_N)}
= \frac{B_1^-(\lambda^-)}
{B_0^+(\lambda^+)+B_0^-(\lambda^-)} a_N(1+o(1)), \\
v_N^+ & = v_N^+(\lambda^+, \lambda^-) 
= \frac{Q_2^+(\phi_N)}{{Z}(\phi_N, \psi_N)}
-\Bigl(\frac{Q_1^+(\phi_N)}{{Z}(\phi_N, \psi_N)}\Bigr)^2 \\
& = \frac{B_2^+(\lambda^+)(B_0^+(\lambda^+) + B_0^-(\lambda^-))
-(B_1^+(\lambda^+))^2}{(B_0^+(\lambda^+) + B_0^-(\lambda^-))^2}
 (a_N)^2(1+o(1)), \\
v_N^- & = v_N^+(\lambda^+, \lambda^-) 
= \frac{Q_2^-(\psi_N)}{{Z}(\phi_N, \psi_N)}
-\Bigl(\frac{Q_1^-(\psi_N)}{{Z}(\phi_N, \psi_N)}\Bigr)^2 \\
& = \frac{B_2^-(\lambda^-)(B_0^+(\lambda^+) 
+ B_0^-(\lambda^-))-(B_1^-(\lambda^-))^2}
{(B_0^+(\lambda^+) + B_0^-(\lambda^-))^2}
 (a_N)^2(1+o(1)), 
\end{split}
\end{align}
as $N\to \infty$ 
where 
$B_n^+(\lambda^+) = C^+ \Gamma(\rho + n+1)(\lambda^+)^{\rho + n +1}$ 
and 
$B_n^-(\lambda^-) = C^-\Gamma(\rho + n+1)(\lambda^-)^{\rho + n +1}$, 
$n\in \mathbb{Z}_+$.   
In particular,  we have 
\begin{align}\label{tau}
\tau := \tau(\lambda^+, \lambda^-) =
\lim\limits_{N\to \infty}
\frac{\mathrm{Cov}(X_1^{+}, X_1^{-})}
{\sqrt{\mathrm{Var}(X_1^{+})
\mathrm{Var}(X_1^{-})}}
= -
\lim\limits_{N\to \infty}
\frac{m_N^{+} m_N^{-}}
{\sqrt{v_N^{+}v_N^{-}}} \in (-1, 0).
\end{align} 
Then, we have the following local limit theorem 
for the joint distribution of 
$\sum\limits_{j \in \Lambda_N} X_j^{ +}$ and 
$\sum\limits_{j \in \Lambda_N} X_j^{ -}$. 
We provide the proof in Section 3. 
\begin{thm}\label{lclt}
Assume that 
$a_n=o(n^\beta)$ for some $\beta>0$. 
Then, for every finite set $B\subset \mathbb{N}$, it holds that 
\begin{align*}
\lim\limits_{N\to \infty} 
\sup\limits_{(K, L)\in \mathbb{Z}_+\times \mathbb{Z}_+} 
\Bigm|\! (N-|B|)
\sqrt{v_N^+ v_N^-} 
P \Bigl(
\sum\limits_{j \in \Lambda_N \setminus B} X_j^{ +} =K, 
\sum\limits_{j \in \Lambda_N \setminus B} X_j^{ -} =L 
\Bigr) -
q_0(\overline{K}_N, \overline{L}_N)
\!\Bigm| =0, 
\end{align*}
where
$ 
q_0(x, y)= 
\frac{1}{2\pi \sqrt{1-\tau^2}}
\exp\Bigl\{-\frac{x^2 -2\tau xy+y^2}{2(1-\tau^2)}
\Bigr\}$, 
$\overline{K}_N= \frac{K-(N-|B|)m_N^+}{\sqrt{(N-|B|)v_N^+}}$
and 
$\overline{L}_N= \frac{L-(N-|B|)m_N^-}{\sqrt{(N-|B|)v_N^-}}$. 
\end{thm}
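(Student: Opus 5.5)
We will prove Theorem \ref{lclt} by the standard Fourier-analytic route for local limit theorems for triangular arrays. Set $M=N-|B|$ and consider the lattice vector $Y_j=(X_j^+,X_j^-)\in\mathbb{Z}_+^2$. The two components are never simultaneously nonzero, so $Y_j$ takes values in the union of the two coordinate half-axes. Let $\varphi_N(s,t)=E[e^{i(sX_1^++tX_1^-)}]$. It equals
\begin{align*}
\varphi_N(s,t)=\frac{1}{Z(\phi_N,\psi_N)}\Bigl\{\sum_{k\geq0}(\phi_Ne^{is})^kG(k)+\sum_{k\geq1}(\psi_Ne^{it})^kG(-k)\Bigr\}
=\frac{Q_0^+(\phi_Ne^{is})+Q_0^-(\psi_Ne^{it})}{Q_0^+(\phi_N)+Q_0^-(\psi_N)}.
\end{align*}
By Fourier inversion on $\mathbb{Z}^2$,
\begin{align*}
P\Bigl(\sum X_j^+=K,\sum X_j^-=L\Bigr)=\frac{1}{4\pi^2}\int_{[-\pi,\pi]^2}\varphi_N(s,t)^Me^{-i(sK+tL)}\,ds\,dt.
\end{align*}
We substitute $s=u/\sqrt{Mv_N^+}$ and $t=w/\sqrt{Mv_N^-}$. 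This yields the factor $M\sqrt{v_N^+v_N^-}$ on the left. We then compare with the inversion formula for the Gaussian density $q_0$, whose Fourier transform is $\exp\{-\frac12(u^2+2\tau uw+w^2)\}$, with the correlation $\tau$ from (\ref{tau}). As usual we split the difference into three regions.

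\emph{Region 1}, $|u|,|w|\leq A$. We show that
\begin{align*}
\varphi_N\bigl(u/\sqrt{Mv_N^+},\,w/\sqrt{Mv_N^-}\bigr)^M e^{-iuMm_N^+/\sqrt{Mv_N^+}-iwMm_N^-/\sqrt{Mv_N^-}}\to e^{-\frac12(u^2+2\tau uw+w^2)}
\end{align*}
uniformly on compacts. This is a Lindeberg/Lyapunov-type CLT for the normalized array. It reduces to the bound $E|Y_1|^3\leq C a_N^3$, which follows from (\ref{q+}) with $n=3$, together with $v_N^\pm\asymp a_N^2$ and the nondegeneracy $|\tau|<1$ from (\ref{mv})--(\ref{tau}). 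A third-order Taylor expansion gives an error $O(M^{-1/2})$ uniformly on $|u|,|w|\leq A$. The Gaussian tail outside the box is handled trivially.

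\emph{Region 2}, the intermediate range $A\leq |(u,w)|\leq \delta\sqrt M$, i.e. $|s|\leq\delta/a_N$ and $|t|\leq\delta/a_N$. Here we need $|\varphi_N|^M\leq e^{-c(u^2+w^2)}$. We get this from the second-order expansion with a uniform remainder controlled by the third moment: $|\varphi_N(s,t)|\leq 1-c\,a_N^2(s^2+t^2)$ for $|s|,|t|\leq\delta/a_N$. The constant $c>0$ comes from positive definiteness of the limiting covariance matrix.

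\emph{Region 3}, $\max(|s|,|t|)\geq\delta/a_N$. This is the main obstacle, since the array's scale $a_N$ grows and $\varphi_N$ is close to $1$ on a shrinking neighbourhood only. We use the explicit form above together with the Tauberian asymptotics (\ref{q+}), extended to complex arguments. Writing $1-\phi_Ne^{is}/g^+\approx (1/\lambda^+a_N)-is$, one expects
\begin{align*}
\varphi_N(s,t)\approx\frac{B_0^+(\lambda^+)(1-i\lambda^+a_Ns)^{-(\rho+1)}+B_0^-(\lambda^-)(1-i\lambda^-a_Nt)^{-(\rho+1)}}{B_0^+(\lambda^+)+B_0^-(\lambda^-)},
\end{align*}
which is the characteristic function of the scaled two-sided Gamma law. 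Its modulus is bounded by $1-\eta(\delta)<1$ for $\max(a_N|s|,a_N|t|)\geq\delta$, since the limit law is non-lattice and non-degenerate. For $|s|$ or $|t|$ of order one, away from $0$, we instead use a direct bound. Summation by parts, using the eventual monotonicity of $k\mapsto\phi_N^kG(k)$ from condition $(g)$, gives $|Q_0^+(\phi_Ne^{is})|\leq C/|s|+C$, which is $o(a_N^{\rho+1})$ when $\rho>0$. When $-1<\rho\leq0$ we need a finer Abel-summation estimate, and I expect this to require care. Either way we obtain $|\varphi_N|\leq1-\eta$ uniformly, so the contribution is at most $M\,a_N^2(1-\eta)^M\cdot 4\pi^2$. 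This is where $a_N=o(N^\beta)$ enters: polynomial growth in $a_N$ is killed by the geometric decay $(1-\eta)^M$.

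Combining the three regions yields the uniform convergence in $(K,L)$, since every bound above is independent of $(K,L)$.
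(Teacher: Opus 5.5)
\textbf{Verdict.} Your overall architecture matches the paper's: Fourier inversion, a CLT on a bounded box, a quadratic bound near the origin, and a uniform bound $|\varphi_N|\le 1-\eta$ elsewhere that is killed by $M a_N^2(1-\eta)^M$. Regions 1 and 2 are sound, and $|s|,|t|\le\delta/a_N$ is the right cutoff. The covariance of $(X_1^+,X_1^-)$ is of order $a_N^2$ and the cubic remainder is $O(a_N^3(|s|+|t|)^3)$, so the quadratic bound is only available at that scale.

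\textbf{The gap is Region 3.} The bound $|\varphi_N|\le 1-\eta$ there is the heart of the matter, and you have not established it.
\begin{itemize}
\item The Gamma-mixture approximation is only ``expected''. The Tauberian theorem behind (\ref{q+}) concerns real $\phi\uparrow g^+$ and says nothing about $Q_0^+(\phi_Ne^{is})$. L\'evy's continuity theorem for $Y^\pm/a_N$ would give the approximation only for $a_N|s|,a_N|t|$ in compacts.
\item Your fallback summation-by-parts bound is misstated. For $\rho>0$ the sequence $\phi_N^kG(k)$ increases up to $k\approx\rho\lambda^+a_N$, where it is of order $a_N^{\rho}$. It is therefore not monotone beyond an $N$-independent point, and one only gets $|Q_0^+(\phi_Ne^{is})|\le Ca_N^{\rho}/|s|$. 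For $\rho\le 0$ the bound $C/|s|$ does hold.
\item In both cases, after dividing by $Z=Z(\phi_N,\psi_N)\asymp a_N^{\rho+1}$, the fallback controls only $|s|\gg a_N^{-\min(1,\rho+1)}$. So for $-1<\rho<0$ the window $a_N^{-1}\ll|s|\ll a_N^{-(\rho+1)}$ is covered by neither of your arguments.
\end{itemize}

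\textbf{How to close it.} Your heuristic can be made rigorous on the whole torus. By $(g)$ one has $(g^+)^k/g(k)!=C^+\Gamma(k+\rho+1)/\Gamma(k+1)+O(k^{\rho-1})$. The first sequence has generating function $C^+\Gamma(\rho+1)(1-z)^{-(\rho+1)}$. Hence, with $r_N=\phi_N/g^+$, uniformly in $s\in[-\pi,\pi]$,
\[
Q_0^+(\phi_Ne^{is})=C^+\Gamma(\rho+1)\bigl(1-r_Ne^{is}\bigr)^{-(\rho+1)}+O\Bigl(1+\sum_{k\ge 1}k^{\rho-1}r_N^k\Bigr).
\]
The error is $O(1+a_N^{\rho}+\log a_N)=o(Z)$, and the same holds for $Q_0^-$. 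Since $|1-re^{is}|^2=(1-r)^2+4r\sin^2(s/2)$, one gets uniformly on $[-\pi,\pi]^2$
\[
|\varphi_N(s,t)|\le p^+\bigl(1+c\,a_N^2s^2\bigr)^{-(\rho+1)/2}+p^-\bigl(1+c\,a_N^2t^2\bigr)^{-(\rho+1)/2}+o(1),\qquad p^\pm=\frac{B_0^\pm(\lambda^\pm)}{B_0^+(\lambda^+)+B_0^-(\lambda^-)}.
\]
This is at most $1-\eta(\delta)$ on all of Region 3, for every $\rho>-1$, with no separate large-$|s|$ bound needed.

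\textbf{Comparison with the paper.} For order-one frequencies $|s|\ge\gamma$, the paper writes $f$ as the mixture $\frac{Q_0^+(\phi_N)}{Z}E[e^{isY^+}]+\frac{Q_0^-(\psi_N)}{Z}E[e^{itY^-}]$. It then uses $|E[e^{isY^+}]|\le\{\nu^+(0)+\sum_k|\nu^+(k+1)-\nu^+(k)|\}/|1-e^{is}|$ and makes the total variation small by splitting at $k=\varepsilon a_N$. This works for all $\rho$ and could replace your summation-by-parts step. However, the paper applies the quadratic Taylor bound on the whole box $|s|,|t|\le\gamma$ with $\gamma$ fixed, which fails at that scale for the reason above. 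So the window $\delta/a_N\le\max(|s|,|t|)\le\gamma$ is exactly where an estimate like your Gamma-mixture bound is indispensable.
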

\noindent

In what follows, 
for $A\subset\Lambda_N$ and 
$\eta=\{\eta(x)\}_{x\in A} \in \mathbb{Z}^A$, 
we define
$S_A(\eta)=\sum\limits_{x\in A}\eta(x)$, 
$S_A^+(\eta)=\sum\limits_{x\in A} \eta(x)^+$ 
and $S_A^-(\eta)=\sum\limits_{x\in A}\eta(x)^-$. 
In particular, when we simply write $S_A$, $S_A^+$, or $S_A^-$, 
they are understood as the corresponding random variables on 
$\mathbb{Z}^A$. Also, when $A=\Lambda_N$, 
we simply write these as $S_N$, $S_N^+$ and $S_N^-$, respectively.
With the above preparations, 
we first show that the total debt concentrates around $L_N$ 
under the measure $\mu_N^{K_N, L_N}$ when $g^+ \leq g^-$. 

\begin{lem}\label{lem2}
Assume that $g^+ \leq g^-$. 
Set $K_N = \kappa a_N N$ and 
$L_N = r \kappa a_N N$ where 
$\kappa, r>0$ and 
$\{a_N\}_{N \geq 1}$ be an increasing 
sequence of positive numbers which satisfies 
$\lim\limits_{N\to \infty}a_N=\infty$ and 
$a_N=o(N^\beta)$ for some $\beta>0$. 
Then, 
for every sequence of positive numbers 
$\{t_N\}_{N\geq 1}$ which satisfies 
$\lim\limits_{N\to \infty} \frac{t_N}{a_N N^{\beta'}}= 
\infty$ for some $\beta'>0$, 
we have 
$\lim\limits_{N\to \infty} 
\mu_{N}^{K_N, L_N}(S_N^- \leq L_N-t_N)=0$. 
\end{lem}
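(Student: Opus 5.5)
Under $\mu_N^{K_N,L_N}$ the law of $S_N^-$ is exactly the weight $h_N(l)=\widetilde Z_N^{K_N,l}/Z_N^{K_N,L_N}$ from (\ref{dec}). So it suffices to show
\[
\frac{\sum_{l\le L_N-t_N}\widetilde Z_N^{K_N,l}}{\sum_{l\le L_N}\widetilde Z_N^{K_N,l}}\to 0 ,
\]
and for this we bound the numerator above and the denominator below.

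\textbf{Reduction via (\ref{key1}).} Using (\ref{key1}) with arbitrary admissible $(\phi,\psi)$ gives
\[
\widetilde Z_N^{K,l}=Z(\phi,\psi)^N\phi^{-(K+l)}\psi^{-l}\,\overline\nu^N_{\phi,\psi}\bigl(S_N^+=K+l,\ S_N^-=l\bigr).
\]
Hence $\widetilde Z_N^{K_N,l}$ is, up to an $l$-independent factor, equal to $(\phi\psi)^{-l}P(S_N^+=K_N+l,\,S_N^-=l)$. The key point is that $g^+\le g^-$ lets us choose $\phi\psi<1$. Concretely, take $\phi_N,\psi_N$ from Lemma \ref{lem1} with $\kappa^+=(1+r)\kappa$ and $\kappa^-=r\kappa$, so that $Nm_N^+=K_N+L_N$ and $Nm_N^-=L_N$ (up to the $1+o(1)$ corrections, which vanish here since we solve exactly). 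Lemma \ref{lem1} gives $\phi_N\psi_N=(g^+/g^-)(1-\tfrac1{\theta^+a_N})(1-\tfrac1{\theta^-a_N})(1+o(1/a_N))$. Since $g^+/g^-\le1$, this yields $(\phi_N\psi_N)^{-1}\ge 1+c/a_N$ for some $c>0$ and all large $N$. In the worst case $g^+=g^-$ the gain is exactly of order $1/a_N$, so we plan only for that.

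\textbf{Lower bound on the denominator.} Keep only the terms $l\in[L_N-\sqrt{N}a_N,L_N]$. For these, Theorem \ref{lclt} (with $B=\emptyset$) shows that $P(S_N^+=K_N+l,S_N^-=l)\ge c'/(N\sqrt{v_N^+v_N^-})\asymp 1/(Na_N^2)$. Indeed, the normalized deviations $\overline K_N,\overline L_N$ are $O(1)$ there because $\sqrt{Nv_N^\pm}\asymp\sqrt N a_N$. The factor $(\phi_N\psi_N)^{-l}\ge(\phi_N\psi_N)^{-(L_N-\sqrt N a_N)}$ is also available. Summing over the $\sqrt N a_N$ terms, the denominator is at least
\[
c''\,N^{-1/2}a_N^{-1}(\phi_N\psi_N)^{-(L_N-\sqrt Na_N)}.
\]

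\textbf{Upper bound on the numerator.} Bound each probability by $1$. Then
\[
\sum_{l\le L_N-t_N}(\phi_N\psi_N)^{-l}\le C a_N(\phi_N\psi_N)^{-(L_N-t_N)},
\]
which is a geometric sum with ratio $\ge1+c/a_N$. The ratio of numerator to denominator is therefore at most
\[
C\sqrt N a_N^2(\phi_N\psi_N)^{t_N-\sqrt Na_N}\le C\sqrt N a_N^2\exp\bigl(-c(t_N-\sqrt Na_N)/a_N\bigr).
\]
This tends to $0$ provided $t_N/a_N\gg\sqrt N+\log N$.

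\textbf{Main obstacle.} The hypothesis only gives $t_N\gg a_NN^{\beta'}$ for some small $\beta'>0$, which can be much smaller than $\sqrt N a_N$. The crude step of bounding probabilities by $1$ in the numerator, while using only an $O(1)$ window in the denominator, must therefore be refined. I would instead compare the numerator and denominator uniformly through the lclt, combined with the exponential tilt. The Gaussian factor varies only on the scale $\sqrt N a_N$, whereas the tilt $(\phi_N\psi_N)^{-l}$ grows like $e^{cl/a_N}$. So I would compare the term at $l$ with the term at $l+t_N$: their ratio is about $e^{-ct_N/a_N}$ times a ratio of probabilities. If the latter is at most polynomial in $N$, the comparison succeeds, since $e^{-ct_N/a_N}\le e^{-cN^{\beta'}}$ beats any polynomial. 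The lclt supplies the needed control only in an additive sense, not a ratio sense, in the tails. To handle the far tail I would retilt, choosing $\kappa^-$ slightly different from $r\kappa$ so that the centre sits near $L_N-t_N$, or else use a crude lower bound $P(\cdot)\ge e^{-CN^{\beta'/2}}$ near the centre. Making this uniform in $l$ is the step I expect to require the most care.
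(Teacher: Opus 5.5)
\textbf{Verdict: there is a genuine gap, and you have located it in the wrong step.}

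Your reduction via (\ref{key1}), the choice of $(\phi_N,\psi_N)$ from Lemma~\ref{lem1} with $\kappa^+=(1+r)\kappa$, $\kappa^-=r\kappa$, and the observation that $g^+\le g^-$ gives $\phi_N\psi_N\le 1-c/a_N$ are exactly the ingredients of the paper's proof. But your argument only closes when $t_N\gg\sqrt N a_N$. The refinement you sketch for the actual hypothesis $t_N\gg a_N N^{\beta'}$ (retilting, ratio control of the local limit theorem in the tails, uniformity in $l$) is never carried out. So as written the lemma is not proved.

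Bounding the numerator probabilities by $1$ is harmless; the paper does exactly that. The loss comes from your lower bound on the denominator. Over the window $[L_N-\sqrt N a_N,L_N]$ you replaced the increasing tilt $(\phi_N\psi_N)^{-l}$ by its \emph{smallest} value $(\phi_N\psi_N)^{-(L_N-\sqrt N a_N)}$. That discards a factor $(\phi_N\psi_N)^{-\sqrt N a_N}\approx e^{c\sqrt N}$ in exchange for only a polynomial gain $\sqrt N a_N$ from summing.

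The fix is to keep only the single term $l=L_N$. There the tilt is largest and, by your choice of parameters, $\overline K_N=\overline L_N=0$ exactly. Since $Z_N^{K_N,L_N}\ge\widetilde Z_N^{K_N,L_N}$,
\[
\mu_N^{K_N,L_N}(S_N^-\le L_N-t_N)\le\sum_{l\le L_N-t_N}\frac{\widetilde Z_N^{K_N,l}}{\widetilde Z_N^{K_N,L_N}}=\sum_{l\le L_N-t_N}(\phi_N\psi_N)^{L_N-l}\,\frac{\overline\nu^N_{\phi_N,\psi_N}(\widetilde\Omega_N^{K_N,l})}{\overline\nu^N_{\phi_N,\psi_N}(\widetilde\Omega_N^{K_N,L_N})}.
\]
Theorem~\ref{lclt} gives $\overline\nu^N_{\phi_N,\psi_N}(\widetilde\Omega_N^{K_N,L_N})\ge c/(Na_N^2)$. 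Now bound the numerator probabilities by $1$. Then either sum the geometric series, or multiply by $L_N$ as the paper does. This yields a bound of order $N a_N^3e^{-ct_N/a_N}$, or $N^2a_N^3e^{-ct_N/a_N}$ in the paper's version.

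This tends to $0$ because $t_N/a_N\gg N^{\beta'}$, while $a_N=o(N^\beta)$ keeps the prefactor polynomial in $N$. No tail control of the local limit theorem, retilting, or uniformity in $l$ is needed: the exponential gain $e^{-ct_N/a_N}$ beats every polynomial loss.
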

\begin{proof}
Set 
$h_N(l)= \frac{\widetilde{Z}_{N}^{K_N, l}}{{Z}_{N}^{K_N, L_N}} 
= \mu_{N}^{K_N, L_N}(S_N^- =l )$, 
$0 \leq l \leq L_N$. 
We have 
\begin{align*}
\mu_{N}^{K_N, L_N}(S_N^- \leq L_N-t_N)
= 
\sum\limits_{l=0}^{L_N-t_N}h_{N}(l) 
& \leq
\sum\limits_{l=0}^{L_N-t_N}
\frac{\widetilde{Z}_N^{K_N, l}}{\widetilde{Z}_N^{K_N, L_N}} \\
& = 
\sum\limits_{l=0}^{L_N-t_N} \phi^{L_N-l}\psi^{L_N-l}
\frac{\overline{\nu}^N_{\phi, \psi}(\widetilde{\Omega}_N^{K_N, l})}
{\overline{\nu}^N_{\phi, \psi}(\widetilde{\Omega}_N^{K_N, L_N})} 
\leq 
\frac{L_N \bigl(\max\limits_{t_N \leq l \leq L_N}\{\phi^l \psi^l\}\bigr)}
{\overline{\nu}^N_{\phi, \psi}(\widetilde{\Omega}_N^{K_N, L_N})}, 
\end{align*}
where we used (\ref{key1}) for the second equality. 
Notice that this estimate holds for every 
$\phi$ and $\psi$ which satisfy 
$\phi \in (0, g^+)$ and $\psi \in (0, \frac{1}{g^-})$. 
We therefore take the pair $(\phi, \psi)$ as 
$(\phi^*_N, \psi^*_N)$ such that 
$E^{\nu_{\phi^*_N, \psi^*_N}}[\xi^+ ] = (1+r)\kappa a_N$ 
and 
$E^{\nu_{\phi^*_N, \psi^*_N}}[\xi^- ] = r\kappa a_N$, 
which is possible by Lemma \ref{lem1}. 

By Lemma \ref{lem1} we know that 
$1-\frac{\phi^*_N}{g^+} \sim \frac{1}{\theta^+ a_N}$ and  
$1-g^- {\psi^*_N} \sim \frac{1}{\theta^- a_N}$ 
as $N\to \infty$ where 
$(\theta^+, \theta^-)$ is a solution of (\ref{th1}) 
with $\kappa^+ = (1+r)\kappa$ and 
$\kappa^- = r \kappa$. 
Then, by applying Theorem \ref{lclt}, we have 
\begin{align*}
{\overline{\nu}^N_{\phi^*_N, \psi^*_N}(\widetilde{\Omega}_N^{K_N, L_N})}
& = 
{\overline{\nu}^N_{\phi^*_N, \psi^*_N}}\bigl(
\sum\limits_{x\in \Lambda_N}\xi(x)^+ = (1+r)\kappa a_N N, 
\sum\limits_{x\in \Lambda_N}\xi(x)^- = r \kappa a_N N
\bigr) \\
& = \frac{1}{2\pi \sqrt{1-\tau^2} N \sqrt{v_N^+ v_N^-}}
(1+o(1)), 
\end{align*}
as $N\to \infty$ where 
$v_N^+ = v_N^+( \theta^+, \theta^-)$, 
$v_N^- = v_N^-( \theta^+, \theta^-)$ and 
$\tau = \tau( \theta^+, \theta^-)$ 
are given by (\ref{mv}) and (\ref{tau}). 
To summarize, 
\begin{align*}
\mu_{N}^{K_N, L_N}(S_N^- \leq L-t_N)
& \leq 
C N \sqrt{v_N^+ v_N^-} \cdot L_N \Bigl\{
g^+ \bigl( 1-\frac{1+o(1)}{\theta^+ a_N}\bigr) 
\frac{1}{g^-} \bigl( 1-\frac{1+o(1)}{\theta^- a_N}\bigr)
\Bigr\}^{t_N} \\
& \leq 
C'N^2 (a_N)^3 e^{-\frac{C''t_N}{aN}}, 
\end{align*}
where we used the condition $g^+ \leq g^-$ 
for the last inequality. 
The right hand goes to $0$ as $N\to \infty$ by the assumption 
on $a_N$ and $t_N$. 
\end{proof}

Now we are in the position to give the proof of Theorem \ref{thm1}. 
\begin{proof}[Proof of Theorem \ref{thm1}; the case $g^+ \leq g^-$]
Let $f:\mathbb{R}^A \to \mathbb{R}$ be a 
bounded continuous local function 
where $A$ is a finite subset of $\mathbb{N}$. 
For every $N\in \mathbb{N}$ so that $\Lambda_N \supset A$, 
by (\ref{dec}) and Fubini's theorem, we have 
\begin{align*}
\begin{split}\label{dec}
& E^{\mu_{N}^{K_N, L_N}}\bigl[ f\bigl( 
\frac{\cdot}{a_N} \bigr)\bigr]  \\
& \quad = 
\sum\limits_{\eta\in \mathbb{Z}^A} 
f\bigl( \frac{\eta}{a_N} \bigr) 
{\mu_{N}^{K_N, L_N}}( \xi_A = \eta) \\
& \quad = 
\sum\limits_{\eta\in \mathbb{Z}^A} 
f\bigl( \frac{\eta}{a_N} \bigr) 
\Bigl\{
\sum\limits_{l=0}^{L_N}h_N(l)
{\widetilde{\mu}_{N}^{K_N, l}}( \xi_A = \eta) \Bigr\}\\
& \quad = 
\sum\limits_{l=0}^{L_N-t_N}h_N(l)
\Bigl\{\sum\limits_{\eta\in \mathbb{Z}^A} 
f\bigl( \frac{\eta}{a_N} \bigr) 
{\widetilde{\mu}_{N}^{K_N, l}}( \xi_A = \eta) \Bigr\}
+ 
\sum\limits_{l=L_N-t_N+1}^{L_N}h_N(l)
\Bigl\{
\sum\limits_{\eta\in \mathbb{Z}^A} 
f\bigl( \frac{\eta}{a_N} \bigr) 
{\widetilde{\mu}_{N}^{K_N, l}}( \xi_A = \eta) \Bigr\}\\
& \quad =: I_1 +I_2,
\end{split} 
\end{align*}
where $\xi_A$ denotes the configurations 
$\xi$ restricted to the set $A$ and 
$\{t_N\}_{N\geq 1}$ be a sequence of positive numbers 
which satisfies 
$\lim\limits_{N\to \infty} \frac{t_N}{a_N N^\beta}= 
\infty$ for some $\beta >0$ and 
$\lim\limits_{N\to \infty} \frac{t_N}{a_N N^{\frac{1}{2}}}= 0$. 

For $I_1$, we have 
\begin{align*}
|I_1| \leq \| f \|_{\infty} 
\sum\limits_{l=0}^{L_N-t_N}h_N(l)
= \| f \|_{\infty} 
\mu_N^{K_N, L_N}(S_N^- \leq L_N -t_N) 
\to 0,
\end{align*}
as $N\to \infty$ by Lemma \ref{lem2}. 
Next, for each $L_N-t_N+1 \leq l \leq L_N$, 
\begin{align*}
& \sum\limits_{\eta\in \mathbb{Z}^A} 
f\bigl( \frac{\eta}{a_N} \bigr) 
{\widetilde{\mu}_{N}^{K_N, l}}( \xi_A = \eta) \\
& = 
\sum\limits_{\eta\in \mathbb{Z}^A} 
f\bigl( \frac{\eta}{a_N} \bigr) 
\overline{\nu}_{\phi, \psi}^{\Lambda_N}
\bigl( \xi_A = \eta \bigm| \widetilde{\Omega}_N^{K_N, l} \bigr)\\
& = 
\sum\limits_{\eta\in \mathbb{Z}^A} 
f\bigl( \frac{\eta}{a_N} \bigr) 
\frac{
\overline{\nu}_{\phi, \psi}^{A}(\eta)
\overline{\nu}_{\phi, \psi}^{\Lambda_N \setminus A}
\bigl( S_{\Lambda_N \setminus A} = K_N-S_A(\eta), 
S_{\Lambda_N \setminus A}^- = l-S_A^-(\eta)\bigr)}
{\overline{\nu}_{\phi, \psi}^{N}
(\widetilde{\Omega}_N^{K_N, l} \bigr)}\\
& = 
\sum\limits_{\eta\in \mathbb{Z}^A} 
f\bigl( \frac{\eta}{a_N} \bigr) 
\overline{\nu}_{\phi, \psi}^{A}(\eta) 
+ 
\sum\limits_{\eta\in \mathbb{Z}^A} 
f\bigl( \frac{\eta}{a_N} \bigr) 
\overline{\nu}_{\phi, \psi}^{A}(\eta)
\Bigl\{
\frac{
\overline{\nu}_{\phi, \psi}^{\Lambda_N \setminus A}
\bigl( S_{\Lambda_N \setminus A} = K_N-S_A(\eta), 
S_{\Lambda_N \setminus A}^- = l-S_A^-(\eta)\bigr)}
{\overline{\nu}_{\phi, \psi}^{N}
(\widetilde{\Omega}_N^{K_N, l} \bigr)} -1 \Bigr\}. 
\end{align*}
The first equality follows from (\ref{key2}). 
In particular, the right hand side does not depend on the 
choice of $\phi$ and $\psi$. 
Then, 
\begin{align*}
I_2 & = 
\Bigl\{\sum\limits_{l=L_N-t_N+1}^{L_N}h_N(l)\Bigr\}
\Bigl\{\sum\limits_{\eta\in \mathbb{Z}^A} 
f\bigl( \frac{\eta}{a_N} \bigr) 
\overline{\nu}_{\phi, \psi}^{A}(\eta) \Bigr\} \\
& \qquad 
+ 
\sum\limits_{l=L_N-t_N+1}^{L_N}h_N(l)
\Bigl[
\sum\limits_{\eta\in \mathbb{Z}^A} 
f\bigl( \frac{\eta}{a_N} \bigr) 
\overline{\nu}_{\phi, \psi}^{A}(\eta) 
\Bigl\{
\frac{
\overline{\nu}_{\phi, \psi}^{\Lambda_N \setminus A}
\bigl( S_{\Lambda_N \setminus A} = K_N-S_A(\eta), 
S_{\Lambda_N \setminus A}^- = l-S_A^-(\eta)\bigr)}
{\overline{\nu}_{\phi, \psi}^{N}
(\widetilde{\Omega}_N^{K_N, l} \bigr)} -1 
\Bigr\}\Bigr]\\
& =: I_3 +I_4. 
\end{align*}
By using Lemma \ref{lem2} again, 
\begin{align*}
I_3 = 
\mu_N^{K_N, L_N}(S_N^- \geq L_N -t_N+1) 
\Bigl\{\sum\limits_{\eta\in \mathbb{Z}^A} 
f\bigl( \frac{\eta}{a_N} \bigr) 
\overline{\nu}_{\phi, \psi}^{A}(\eta) \Bigr\} 
= 
\Bigl\{\sum\limits_{\eta\in \mathbb{Z}^A} 
f\bigl( \frac{\eta}{a_N} \bigr) 
\overline{\nu}_{\phi, \psi}^{A}(\eta) \Bigr\} 
(1+o(1)), 
\end{align*}
as $N\to \infty$. 
For $I_4$, 
\begin{align*}
|I_4| 
& \leq 
\|f \|_{\infty} 
\max\limits_{L_N-t_N +1 \leq l \leq L_N}
\Bigl\{
\sum\limits_{\eta\in \mathbb{Z}^A} 
\overline{\nu}_{\phi, \psi}^{A}(\eta) 
\Bigm|
\frac{
\overline{\nu}_{\phi, \psi}^{\Lambda_N \setminus A}
\bigl( S_{\Lambda_N \setminus A} = K_N-S_A(\eta), 
S_{\Lambda_N \setminus A}^- = l-S_A^-(\eta)\bigr)}
{\overline{\nu}_{\phi, \psi}^{N}
(\widetilde{\Omega}_N^{K_N, l} \bigr)} -1 
\Bigm| \Bigr\}. 
\end{align*}
Therefore, all we have to show is 
\begin{equation}\label{main}
\sum\limits_{\eta\in \mathbb{Z}^A} 
f\bigl( \frac{\eta}{a_N} \bigr) 
\overline{\nu}_{\phi, \psi}^{A}(\eta) 
\to E^{\overline{\mu}_{\kappa, r}^A}[f], 
\end{equation}
and 
\begin{equation*}
I_5:= 
\max\limits_{L_N-t_N +1 \leq l \leq L_N}
\Bigl\{
\sum\limits_{\eta\in \mathbb{Z}^A} 
\overline{\nu}_{\phi, \psi}^{A}(\eta)
\Bigm|
\frac{
\overline{\nu}_{\phi, \psi}^{\Lambda_N \setminus A}
\bigl( S_{\Lambda_N \setminus A} = K_N-S_A(\eta), 
S_{\Lambda_N \setminus A}^- = l-S_A^-(\eta)\bigr)}
{\overline{\nu}_{\phi, \psi}^{N}
(\widetilde{\Omega}_N^{K_N, l} \bigr)} -1 
\Bigm| \Bigr\} 
\to 0, 
\end{equation*}
as $N\to \infty$ 
where $\mu_{\kappa, r}$ denotes the corresponding 
limiting distribution in Theorem \ref{thm1}.
For this purpose, we again choose $\phi^*_N$ and $\psi^*_N$ 
so that 
$E^{\nu_{\phi^*_N, \psi^*_N}}[\xi^+ ] = (1+r)\kappa  a_N$ 
and 
$E^{\nu_{\phi^*_N, \psi^*_N}}[\xi^- ] = r \kappa a_N$ 
hold. 

For the proof of the convergence of the main term (\ref{main}), 
we assume that 
$f$ is a function of one variable for notational simplicity. 
The general case can be proved by the similar manner since 
$\overline{\nu}_{\phi, \psi}^A$ 
is a product measure with the same marginal distribution. 
For $R>0$, we have 
\begin{align*}
& \sum\limits_{\eta\in \mathbb{Z}} 
f\bigl( \frac{\eta}{a_N} \bigr) 
{\nu}_{\phi^*_N, \psi^*_N}(\eta) \\
& = 
\sum\limits_{k =0}^{Ra_N} 
f\bigl( \frac{k}{a_N} \bigr) \frac{1}{Z({\phi^*_N, \psi^*_N})}(\phi^*_N)^k 
\frac{1}{g(k)!}
+ 
\sum\limits_{k= -Ra_N}^{-1} 
f\bigl( \frac{k}{a_N} \bigr) \frac{1}{Z({\phi^*_N, \psi^*_N})}(\psi^*_N)^{|k|} 
{g(k+1)!} \\
& \qquad \qquad + 
\sum\limits_{|k| \geq Ra_N} 
f\bigl( \frac{k}{a_N} \bigr) 
{\nu}_{\phi^*_N, \psi^*_N}(k) \\
& =: I_6+ I_7 +I_8. 
\end{align*}
By (\ref{const}), (\ref{mv}) and Lemma \ref{lem1},  
\begin{align*}
I_6 
& = 
\frac{1+o(1)}{\{C^+ \Gamma(\rho +1) (\theta^+)^{\rho +1}
+ C^- \Gamma(\rho +1) (\theta^-)^{\rho +1}\}(a_N)^{\rho+1}}
\sum\limits_{k =0}^{Ra_N} 
f\bigl( \frac{k}{a_N} \bigr) 
\Bigl\{g^+\bigl(1-\frac{1}{\theta^+ a_N}\bigr)\Bigr\}^k 
\frac{C^+ k^{\rho}}{(g^+)^k} + o(1) \\
& = 
\frac{C^+(1+o(1))}{\Gamma(\rho +1)\{C^+ (\theta^+)^{\rho +1}
+ C^- (\theta^-)^{\rho +1}\}}
\frac{1}{a_N}
\sum\limits_{k =0}^{Ra_N} 
f\bigl( \frac{k}{a_N} \bigr) 
e^{-\frac{1}{\theta^+}\frac{k}{a_N}} 
\bigl(\frac{k}{a_N}\bigr)^\rho + o(1) \\
& \to 
\frac{C^+}{\Gamma(\rho +1)\{C^+ (\theta^+)^{\rho +1}
+ C^- (\theta^-)^{\rho +1}\}}
\int_0^R 
f(t) e^{-\frac{1}{\theta^+}t} 
t^\rho dt, 
\end{align*}
as $N\to \infty$ where the convergence 
follows from the definition of the Riemann integral. 
We note that 
since $Z({\phi^*_N, \psi^*_N}) \to \infty$ in the limit $N\to \infty$, 
a finite sum in $I_6$ is negligible and 
by the condition $(g)$, 
$\frac{1}{g(k)!}$ can be replaced with 
$\frac{C^+ k^{\rho}}{(g^+)^k}$
up to an error of $o(1)$.
By taking the limit $R\to \infty$, 
the integral in right-hand side converges to 
$\int_0^\infty
f(t) e^{-\frac{1}{\theta^+}t} 
t^\rho dt$. 
Similarly, 
\begin{align*}
I_7 
& = 
\frac{1+o(1)}{\{C^+ \Gamma(\rho +1) (\theta^+)^{\rho +1}
+ C^- \Gamma(\rho +1) (\theta^-)^{\rho +1}\}(a_N)^{\rho+1}}
\sum\limits_{k =1}^{Ra_N} 
f\bigl( \frac{-k}{a_N} \bigr) 
\Bigl\{\frac{1}{g^-}\bigl(1-\frac{1}{\theta^- a_N}\bigr)\Bigr\}^k 
{(g^-)^k C^- k^{\rho}} + o(1) \\
& \to 
\frac{C^-}{\Gamma(\rho +1)\{C^+ (\theta^+)^{\rho +1}
+ C^- (\theta^-)^{\rho +1}\}}
\int_0^\infty
f(-t) e^{-\frac{1}{\theta^-}t} 
t^\rho dt, 
\end{align*}
as $N\to \infty$ and $R \to \infty$. 
For $I_8$, we have 
\begin{align*}
\limsup\limits_{N\to \infty}|I_8| 
& \leq C \| f\|_{\infty} \Bigl\{
\int_R^\infty 
e^{-\frac{1}{\theta^+}t} 
t^\rho dt + 
\int_R^\infty
e^{-\frac{1}{\theta^-}t} 
t^\rho dt\Bigr\}. 
\end{align*}
The right hand side goes to $0$ as $R\to \infty$. 
As a result, we obtain the following.
\begin{align*}
& \lim\limits_{N\to \infty}
\sum\limits_{\eta\in \mathbb{Z}} 
f\bigl( \frac{\eta}{a_N} \bigr) 
{\nu}_{\phi^*_N, \psi^*_N}(\eta) \\
& \quad = 
\frac{1}{\Gamma(\rho +1)\{C^+ (\theta^+)^{\rho +1}
+ C^- (\theta^-)^{\rho +1}\}}
\Bigl\{ C^+ \int_0^\infty
f(t) e^{-\frac{1}{\theta^+}t} 
t^\rho dt +
C^- \int^0_{-\infty}
f(t) e^{-\frac{1}{\theta^-}|t|} 
|t|^\rho dt\Bigr\}. 
\end{align*}

Finally, we consider the estimate of $I_5$. 
By Theorem \ref{lclt}, we have 
\begin{align*}
\overline{\nu}_{\phi^*_N, \psi^*_N}^{N}
\bigl(\widetilde{\Omega}_N^{K_N, l} \bigr) 
& = 
\overline{\nu}_{\phi^*_N, \psi^*_N}^{N}
\bigl(S_N^{+}=K_N + l, S_N^-= l \bigr) \\
& = \frac{1}{N\sqrt{v_N^+ v_N^-}} \Bigl[ \frac{1}{2\pi\sqrt{1-\tau^2}}
\exp\Bigl\{-\frac{1}{2(1-\tau^2)} 
\Bigl(
\bigl( \frac{K_N + l- N (1+r)\kappa a_N}{\sqrt{N v_N^+}} \bigr)^2 \\
& \qquad \qquad \qquad 
-2\tau \bigl( \frac{K_N + l- N (1+r)\kappa a_N}{\sqrt{N v_N^+}} \bigr)
\bigl( \frac{ l- N r \kappa a_N}{\sqrt{N v_N^-}} \bigr) 
+ \bigl( \frac{ l- N r \kappa a_N}{\sqrt{N v_N^-}} \bigr)^2
\Bigr)\Bigr\} + o(1) \Bigr], 
\end{align*}
and 
\begin{align}\label{b2}
\begin{split}
& \overline{\nu}_{\phi^*_N, \psi^*_N}^{\Lambda_N \setminus A}
\bigl( S_{\Lambda_N\setminus A} = K_N -S_A(\eta), 
S_{\Lambda_N\setminus A}^- = l -S_A^-(\eta) \bigr) \\
& \qquad = 
\frac{1}{(N-|A|)\sqrt{v_N^+ v_N^-}} 
\Bigl[ \frac{1}{2\pi\sqrt{1-\tau^2}}
\exp\Bigl\{- \frac{1}{2(1-\tau^2)} 
\Bigl(
\bigl( \frac{K_N + l- S_A^+(\eta)-(N-|A|) (1+r) \kappa a_N}
{\sqrt{(N-|A|) v_N^+}} \bigr)^2 \\
& \qquad \qquad \qquad 
-2\tau \bigl( \frac{K_N + l- S_A^+(\eta)-(N-|A|) (1+r)\kappa a_N}
{\sqrt{(N-|A|) v_N^+}} \bigr)
\bigl( \frac{ l- S_A^-(\eta)-(N-|A|) r \kappa a_N}
{\sqrt{(N-|A|) v_N^-}} \bigr) 
\\
& \qquad \qquad \qquad \qquad \qquad 
+ \bigl( \frac{ l- S_A^-(\eta)-(N-|A|) r \kappa a_N}
{\sqrt{(N-|A|) v_N^-}} \bigr)^2
\Bigr)\Bigr\} + o(1) \Bigr]. 
\end{split}
\end{align}
Note that the $o(1)$ terms do not depend on $l$ and $\eta$. 
For every $L_N-t_N +1 \leq l \leq L_N$, we have 
$( \frac{K_N + l- N (1+r)\kappa a_N}{\sqrt{N v_N^+}} )^2 
\leq \frac{(t_N)^2}{N v_N^+} $ 
and 
$( \frac{ l- N r \kappa a_N}{\sqrt{N v_N^-}} )^2 
\leq \frac{(t_N)^2}{N v_N^-} $. 
By the choice of $t_N$ and the fact that 
$v_N^+ \asymp v_N^- \asymp a_N^2$, 
these go to $0$ as $N\to \infty$.
As a result, 
\begin{align}\label{b3}
\overline{\nu}_{\phi^*_N, \psi^*_N}^{N}
(\widetilde{\Omega}_N^{K_N, l} ) = 
\frac{1 + o(1)}{{2\pi\sqrt{1-\tau^2}}N\sqrt{v_N^+ v_N^-}}, 
\end{align}
as $N\to \infty$ uniformly in 
$L_N-t_N +1 \leq l \leq L_N$. 
Combining this with (\ref{b2}), we obtain the 
following bound for some $C_0 < \infty$. 
\begin{align*}
\sup_{N\geq 1}
\sup\limits_{ L_N-t_N +1 \leq l \leq L_N}
\sup\limits_{\eta \in \mathbb{Z}^A} 
\Bigm|
\frac{
\overline{\nu}_{\phi^*_N, \psi^*_N}^{\Lambda_N \setminus A}
\bigl( S_{\Lambda_N \setminus A} = K_N-S_A(\eta), 
S_{\Lambda_N \setminus A}^- = l-S_A^-(\eta)\bigr)}
{\overline{\nu}_{\phi^*_N, \psi^*_N}^{N}
(\widetilde{\Omega}_N^{K_N, l} \bigr)} -1 
\Bigm|\,  
\leq C_0.  
\end{align*}
Next, we consider the estimate of (\ref{b2}). 
Set $D_A(k) = \{-k, -k+1, \cdots, k\}^A$, $k\in \mathbb{Z}_+$. 
If $\eta \in D_A(\lfloor Ra_N \rfloor)$, $R>0$ then 
$|S_A(\eta)| \leq |A|Ra_N$, $0 \leq S_A^-(\eta) \leq 
|A|Ra_N$ and 
these terms are negligible compared to 
$\sqrt{N v_N^+}$, $\sqrt{N v_N^-}$. 
Therefore, by the same argument as above 
$\bigl( \frac{K_N + l- S_A^+(\eta)-(N-|A|) (1+r)\kappa a_N}
{\sqrt{(N-|A|) v_N^+}} \bigr)^2 =o(1)$ and 
$\bigl( \frac{ l- S_A^-(\eta)-(N-|A|) r \kappa a_N}
{\sqrt{(N-|A|) v_N^-}} \bigr)^2=o(1)$ 
uniformly in $L_N-t_N +1 \leq l \leq L_N$ 
and $\eta \in D_A(\lfloor Ra_N \rfloor)$. 
Then, by (\ref{b2}) and (\ref{b3}), we have
\begin{align*}
\lambda_{N, R} := 
\sup\limits_{ L_N-t_N +1 \leq l \leq L_N} 
\sup\limits_{\eta \in D_A(\lfloor Ra_N \rfloor)} 
\Bigm| \! 
\frac{
\overline{\nu}_{\phi^*_N, \psi^*_N}^{\Lambda_N \setminus A}
\bigl( S_{\Lambda_N \setminus A} = K_N-S_A(\eta), 
S_{\Lambda_N \setminus A}^- = l-S_A^-(\eta)\bigr)}
{\overline{\nu}_{\phi^*_N, \psi^*_N}^{N}
(\widetilde{\Omega}_N^{K_N, l} \bigr)}
-1 \! \Bigm| 
\to 0, 
\end{align*}
as $N\to \infty$ for every $R>0$. 
To summarize, 
\begin{align*}
I_5 
& \leq 
\lambda_{N, R} + 
C_0 \sum\limits_{\eta \in \mathbb{Z}^A \setminus 
D_A(\lfloor Ra_N \rfloor)} 
\overline{\nu}_{\phi^*_N, \psi^*_N}^{A}(\eta) 
\leq 
\lambda_{N, R} + 
C_0 |A| {\nu}_{\phi^*_N, \psi^*_N}( |\eta| \geq Ra_N). 
\end{align*}
The second term goes to $0$ 
as $N\to\infty$ and $R\to \infty$ by the 
estimate of $I_8$ above. 
Hence, we obtain $I_5 \to 0$ and we can complete the proof. 

\end{proof}

\subsection{Proof of Theorem \ref{thm1}; the case $g^+ > g^-$}
Next, we consider the case $g^+>g^-$. 
For $\phi \in (g^-, g^+)$, we have 
$Q_n^+(\phi) = \sum\limits_{k\geq 0}k^n 
\phi^k \frac{1}{g(k)!}<\infty$ and 
$\widetilde{Q}_n^-(\phi) := 
\sum\limits_{k\leq -1}|k|^n \phi^k {g(k+1)!}
=\sum\limits_{k\geq 1 }k^n 
\frac{1}{\phi^k} {g(-k+1)!}<\infty$ for every $n\in \mathbb{Z}_+$. 
Therefore, 
we can define a probability distribution 
${\nu}_\phi$ on $\mathbb{Z}$ by the following. 
\begin{equation}\label{nu0}
{\nu}_\phi(k) = \frac{1}{{Z}(\phi)}
\phi^k G(k),\ k\in \mathbb{Z},
\end{equation} 
where $G(k)$ is given by (\ref{margi}) and 
${Z}(\phi) =Q_0^+(\phi)+ \widetilde{Q}_0^-(\phi)$. 
For $A \subset \mathbb{N}$, 
$\overline{\nu}_{\phi}^A$ 
denote the product measure on 
$\mathbb{Z}^A$ whose one site marginal distribution equals to 
${\nu}_{\phi}$. 
We denote the corresponding measure as $\overline{\nu}_{\phi}^N$ 
when $A=\Lambda_N$. 
Then, we have the identity 
${\mu}_{N}^{K, L}(\, \cdot\,) = 
\overline{\nu}_{\phi}^{N}
\bigl(\, \cdot  \bigm| {\Omega}_N^{K, L} \bigr)$. 
The right-hand side does not depend on the 
choice of the parameter $\phi$. 

The asymptotics of $Q_n^+(\phi)$ as $\phi \uparrow g^+$ 
is given by (\ref{q+}) and 
$\lim\limits_{\phi \uparrow g^+}
\widetilde{Q}_n^-(\phi) 
=\sum\limits_{k\geq 1 }k^n 
\frac{1}{(g^+)^k} {g(-k+1)!} =: d_n <\infty$. 
It is easy to see that 
$E^{{\nu}_\phi}[\xi]$ is continuous, 
increasing in $\phi$ 
and diverges to infinity as $\phi \uparrow g^+$. 
Similarly to Lemma \ref{lem1}, we can prove the following. 
\begin{lem}\label{lem3}
Let $\kappa>0$ and 
$\{a_N\}_{N \geq 1}$ be an increasing 
sequence of positive numbers 
which satisfies $\lim\limits_{N\to \infty}a_N=\infty$. 
There exists a unique increasing sequence 
$\{\phi^*_N\}_{N \geq 1}$ which satisfies 
$E^{{\nu}_{\phi^*_N}}[\xi] = 
\kappa a_N$ for every $N\geq 1$. 
Moreover, 
$1-\frac{\phi^*_N}{g^+} \sim \frac{1}{\theta a_N}$ 
as $N\to \infty$ where $\theta= \frac{\kappa}{\rho+ 1}$. 
\end{lem}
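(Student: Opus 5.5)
We split the argument into existence and uniqueness of $\phi^*_N$, followed by the asymptotics as $N\to\infty$.

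\emph{Existence and uniqueness.} Set $M(\phi):=E^{\nu_\phi}[\xi]=\frac{Q_1^+(\phi)-\widetilde{Q}_1^-(\phi)}{Q_0^+(\phi)+\widetilde{Q}_0^-(\phi)}$ for $\phi\in(g^-,g^+)$. Since $\nu_\phi$ is an exponential family with weight $\phi^kG(k)$, we have $\phi M'(\phi)=\mathrm{Var}_{\nu_\phi}(\xi)>0$. Differentiation under the sum is justified by locally uniform convergence of the series on compact subintervals of $(g^-,g^+)$. Hence $M$ is continuous and strictly increasing.

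As $\phi\downarrow g^-$, the negative part $\widetilde{Q}_0^-(\phi)=\sum_{k\ge1}\phi^{-k}g(-k+1)!$ behaves, by (\ref{const}), like $\sum_k C^-k^\rho(g^-/\phi)^k$. This diverges, and by (\ref{q+}) (Tauberian form) $\widetilde{Q}_1^-$ dominates $\widetilde{Q}_0^-$ by a factor of order $(1-g^-/\phi)^{-1}$. Meanwhile $Q_n^+$ stays bounded, so $M(\phi)\to-\infty$. As $\phi\uparrow g^+$, $Q_1^+\to\infty$ while $\widetilde{Q}_n^-\to d_n<\infty$, so by (\ref{q+})
\[
M(\phi)\sim \frac{Q_1^+(\phi)}{Q_0^+(\phi)}\sim (\rho+1)\Bigl(1-\frac{\phi}{g^+}\Bigr)^{-1}\to\infty .
\]
Thus $M$ is a bijection of $(g^-,g^+)$ onto $\mathbb{R}$. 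For each $N$ there is a unique $\phi^*_N$ with $M(\phi^*_N)=\kappa a_N$, and $\phi^*_N=M^{-1}(\kappa a_N)$ is increasing because $a_N$ is increasing and $M^{-1}$ is increasing. (Only the divergence at $g^+$ is needed to cover all large values $\kappa a_N$. If one wants to avoid the Tauberian argument at $g^-$, the intermediate value theorem applied on $[\phi_0,g^+)$ suffices whenever $\kappa a_N>M(\phi_0)$, and this holds for every $N$ provided the range of $M$ contains $(0,\infty)$.)

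\emph{Asymptotics.} Since $\kappa a_N\to\infty$ and $M$ is bounded on compact subsets of $(g^-,g^+)$, we get $\phi^*_N\uparrow g^+$. Write $\varepsilon_N=1-\phi^*_N/g^+$. By (\ref{q+}), $Q_0^+(\phi^*_N)\sim C^+\Gamma(\rho+1)\varepsilon_N^{-(\rho+1)}$ and $Q_1^+(\phi^*_N)\sim C^+\Gamma(\rho+2)\varepsilon_N^{-(\rho+2)}$. The terms $\widetilde{Q}_0^-,\widetilde{Q}_1^-$ remain bounded (they converge to $d_0,d_1$), and $\rho+1>0$ ensures $Q_0^+\to\infty$, so these bounded terms are negligible. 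Therefore
\[
\kappa a_N=M(\phi^*_N)=\frac{\Gamma(\rho+2)}{\Gamma(\rho+1)}\,\varepsilon_N^{-1}(1+o(1))=(\rho+1)\varepsilon_N^{-1}(1+o(1)),
\]
which gives $\varepsilon_N\sim\frac{\rho+1}{\kappa a_N}=\frac{1}{\theta a_N}$ with $\theta=\frac{\kappa}{\rho+1}$.

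The main obstacle is the Tauberian input, specifically for $Q_1^+$. The case $n=0$ with $\rho\in(-1,0]$ needs $\beta=\rho+1>0$ together with eventual monotonicity of the coefficients $(g^+)^k/g(k)!$; this follows from the eventual monotonicity of $g$ in condition $(g)$, and we take it as given, as in (\ref{q+}). Everything else reduces to the cancellation of the bounded negative-side contributions.
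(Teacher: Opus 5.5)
Your proof is correct and follows the same route the paper indicates: continuity and strict monotonicity of $E^{\nu_\phi}[\xi]$ via $\phi M'(\phi)=\mathrm{Var}_{\nu_\phi}(\xi)$, divergence as $\phi\uparrow g^+$, and the asymptotics from (\ref{q+}) with the bounded terms $\widetilde{Q}_n^-\to d_n$ negligible. Your extra check that $M(\phi)\to-\infty$ as $\phi\downarrow g^-$ usefully supplies existence for every $N\geq 1$, which the paper leaves implicit; as a minor remark, the direction of the Tauberian theorem you need (coefficient asymptotics $\Rightarrow$ generating-function asymptotics) is the Abelian one, so no monotonicity of the coefficients is actually required there.
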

\noindent
For this choice of $\phi$, we have the following. 
\begin{lem}\label{lem5}
Let $\{\phi^*_N\}_{N\geq 1}$ be a sequence obtained 
in Lemma \ref{lem3} and set $L_N= r\kappa a_N N$ for 
$r, \kappa>0$. 
There exists $\gamma_0>0$ such that 
$\overline{\nu}_{\phi^*_N}^N 
\bigl( S_N^- \geq L_N \bigr) \leq e^{-\gamma_0 L_N}$ 
for every $N$ large enough. 
\end{lem}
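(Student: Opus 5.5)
We prove the bound with an exponential Chebyshev (Chernoff) inequality applied to the i.i.d. variables $X_j^-$, where $X_j$ has law $\nu_{\phi^*_N}$. The key point is that under $\nu_{\phi}$ with $\phi$ close to $g^+ > g^-$, the negative part has a geometric tail whose decay rate stays bounded away from $1$ uniformly in $N$. Its mean therefore stays bounded, even tends to $0$, while $L_N/N = r\kappa a_N \to \infty$.

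\textbf{Step 1: the tail.} For $k \geq 1$ we have
\[
\nu_\phi(-k) = \frac{1}{Z(\phi)}\,\phi^{-k} g(-k+1)!,
\qquad
\frac{g(-k+1)!}{(g^-)^k} \le C k^{\rho} \quad\text{by (\ref{const})}.
\]
Fix $\phi_0 \in (g^-, g^+)$. By Lemma~\ref{lem3}, $\phi^*_N \uparrow g^+$, so $\phi^*_N \geq \phi_0$ for $N$ large. Since $Z(\phi) \ge G(0) = 1$, this gives
\[
\nu_{\phi^*_N}(-k) \le C k^{\rho} \Bigl(\frac{g^-}{\phi_0}\Bigr)^{k}.
\]
Set $q := g^-/\phi_0 < 1$.

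\textbf{Step 2: a uniform exponential moment.} Choose $\gamma > 0$ with $e^{\gamma} q < 1$. Then
\[
E\bigl[e^{\gamma X_1^-}\bigr] \le 1 + \sum_{k\ge1} C k^\rho (e^\gamma q)^k =: M_\gamma < \infty,
\]
uniformly in $N$ large. In fact $Z(\phi^*_N) \ge Q_0^+(\phi^*_N) \to \infty$ by (\ref{q+}), so $E[e^{\gamma X_1^-}] \to 1$. The monotone-near-infinity part of condition $(g)$ is not even needed here; only (\ref{const}) is used.

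\textbf{Step 3: Chernoff.} By independence,
\[
\overline{\nu}^N_{\phi^*_N}(S_N^- \ge L_N) \le e^{-\gamma L_N} M_\gamma^{N} = \exp\{-\gamma L_N + N\log M_\gamma\}.
\]
Since $L_N / N = r\kappa a_N \to \infty$, we have $N \log M_\gamma \le \frac{\gamma}{2} L_N$ for $N$ large. This yields the claim with $\gamma_0 = \gamma/2$.

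\textbf{Main obstacle.} The only delicate point is the uniformity in $N$ of the exponential moment. It rests on keeping $\phi^*_N$ bounded away from $g^-$, which is immediate from $\phi^*_N \to g^+ > g^-$ (Lemma~\ref{lem3}), together with the lower bound $Z \ge 1$.
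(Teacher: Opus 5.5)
Your proof is correct and follows the paper's route: an exponential Chebyshev bound $e^{-\gamma L_N}(E^{\nu_{\phi^*_N}}[e^{\gamma\xi^-}])^N$, with the negative tail controlled by (\ref{const}) and $\phi^*_N\to g^+>g^-$. The only difference is minor. The paper uses $Z(\phi^*_N)\ge Q_0^+(\phi^*_N)\asymp a_N^{\rho+1}$ to get $E[e^{\gamma\xi^-}]\le e^{C a_N^{-(\rho+1)}}$, whereas you use $Z\ge 1$ and absorb $N\log M_\gamma$ via $L_N/N=r\kappa a_N\to\infty$; your fixed $\phi_0\in(g^-,g^+)$ also gives the uniform tail bound a bit more cleanly than the paper's comparison with $g^-/g^+$.
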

\begin{proof}
By Markov's inequality, we have 
\begin{align*}
\overline{\nu}_{\phi^*_N}^N 
\bigl( S_N^- \geq L_N \bigr) \leq e^{-\gamma L_N}
E^{\overline{\nu}_{\phi^*_N}^N }[e^{\gamma S_N^-}] 
= e^{-\gamma L_N}\bigl(E^{\nu_{\phi^*_N}}[e^{\gamma \xi^-}]\bigr)^N, 
\end{align*}
for every $\gamma>0$. We compute that 
\begin{align*}
E^{\nu_{\phi^*_N}}[e^{\gamma \xi^-}] 
& = 
\frac{1}{{Z}(\phi^*_N)} 
\Bigl\{ \sum\limits_{k\geq 0} (\phi^*_N)^k \frac{1}{g(k)!} 
+ \sum\limits_{k\leq -1}e^{\gamma(-k)} (\phi^*_N)^k {g(k+1)!}\Bigr\}\\
& = \frac{1}{Q_0^+(\phi^*_N) + \widetilde{Q}_0^-(\phi^*_N)}
\Bigl\{Q_0^+(\phi^*_N) + \sum\limits_{k\geq 1}
\bigl(\frac{e^\gamma}{\phi^*_N}\bigr)^k g(-k+1)!\Bigr\} \\
& \leq 
1+ \frac{1}{Q_0^+(\phi^*_N)}\sum\limits_{k\geq 1}
\bigl(\frac{e^\gamma}{\phi^*_N}\bigr)^k g(-k+1)!. 
\end{align*}
By the asymptotics of $\phi^*_N$ and the condition $(g)$, 
$\sum\limits_{k\geq 1}
\bigl(\frac{e^\gamma}{\phi^*_N}\bigr)^k g(-k+1)! 
\leq C\sum\limits_{k\geq 1}\bigl(\frac{e^{\gamma}g^-}{g^+}
\bigr)^k k^{\rho}$ for every $N$ large enough 
and this converges if $\gamma>0$ is small enough. 
Combining this with the asymptotics $Q_0^+(\phi^*_N) \sim B^+_0(\theta) 
(a_N)^{\rho +1}$, we obtain 
$E^{\nu_{\phi^*_N}}[e^{\gamma \xi^-}] \leq 1+ \frac{C}{(a_N)^{\rho +1}} 
\leq e^{\frac{C}{(a_N)^{\rho +1}}}$. 
Hence, we obtain 
$
\overline{\nu}_{\phi^*_N}^N 
\bigl( S_N^- \geq L_N \bigr) 
\leq e^{-\gamma L_N} 
e^{\frac{CN}{(a_N)^{\rho +1}}}$. 
\end{proof}

Next, similarly to the case $g^+ \leq g^-$, we formulate the 
local limit theorem. 
Let $\{\phi_N\}_{N \geq 1}$ be an 
increasing sequences of positive numbers 
which satisfies $\phi_N \in (g^-, g^+)$ 
and 
$1-\frac{\phi_N}{g^+} \sim \frac{1}{\lambda a_N}$
as $N\to \infty$ 
where $\lambda>0$ and 
$\{a_N\}_{N\geq 1}$ be an increasing 
sequence of positive numbers which satisfies 
$\lim\limits_{N\to \infty}a_N = \infty$. 
In this subsection, 
$\{{X}_j\}_{j\in \Lambda_N}$ denotes a family of 
independent and identically distributed 
$\mathbb{Z}$-valued random variables 
with common distribution ${\nu}_{\phi_N}$, 
$N\in \mathbb{N}$. 
By (\ref{q+}), we have 
$Q_n^+ (\phi_N) \sim B_n^+(\lambda) (a_N)^{\rho + n+1}$ and 
$\widetilde{Q}_n^- (\phi_N) \to d_n$ 
as $N\to \infty$ for every $n\in \mathbb{Z}_+$. 
This yields that 
\begin{align*}
{Z}(\phi_N)& 
= 
{{Q}_0^+(\phi_N)}+ {\widetilde{Q}_0^-(\phi_N)}
= 
{B_0^+(\lambda)}(a_N)^{\rho+1}(1+o(1)), \\
m_N & 
= E[{X}_1] 
= \frac{Q_1^+(\phi_N)-{\widetilde{Q}_1^-(\phi_N)}}{{Z}(\phi_N)} 
= \frac{B_1^+(\lambda)}{B_0^+(\lambda)}a_N(1+o(1)), \\
v_N & 
= \mathrm{Var}({X}_1) 
= \frac{Q_2^+(\phi_N) +{\widetilde{Q}_2^-(\phi_N)}}{{Z}(\phi_N)}
-m_N^2 
= \frac{B_2^+(\lambda)B_0^+(\lambda) -(B_1^+(\lambda))^2}
{(B_0^+(\lambda))^2}(a_N)^2(1+o(1)). 
\end{align*}
In this setting, 
note that all moments of the negative part of $X_1$ 
converge to $0$ as $N\to\infty$.
Then, we have the following local limit theorem 
for the distribution of 
$\sum\limits_{j \in \Lambda_N} X_j$. 
\begin{thm}\label{lclt3}
Assume that 
$a_N=o(N^\beta)$ as $N\to \infty$ for some 
$\beta >0$. 
Then, 
for every finite set $B\subset \mathbb{N}$, it holds that 
\begin{align*}
\lim\limits_{N\to \infty} 
\sup\limits_{K \in \mathbb{Z}} 
\Bigm|\! 
\sqrt{ (N-|B|) v_N} 
P \Bigl(
\sum\limits_{j \in \Lambda_N \setminus B} X_j =K 
\Bigr) -
q_0(\overline{K}_N)
\!\Bigm| =0, 
\end{align*}
where
$ 
q_0(x)= 
\frac{1}{\sqrt{2 \pi}}
\exp\Bigl\{-\frac{x^2}{2}\Bigr\} 
$ and 
$\overline{K}_N= \frac{K-(N-|B|)m_N}{\sqrt{(N-|B|)v_N}}$. 
\end{thm}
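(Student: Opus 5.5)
We follow the standard Fourier-analytic route to local limit theorems, adapted to a triangular array whose scale $a_N$ grows. Write $M=N-|B|$ and $S=\sum_{j\in\Lambda_N\setminus B}X_j$. By Fourier inversion on $\mathbb{Z}$,
\begin{align*}
\sqrt{Mv_N}\,P(S=K)=\frac{1}{2\pi}\int_{-\pi\sqrt{Mv_N}}^{\pi\sqrt{Mv_N}} e^{-iu\overline{K}_N}\,\Bigl\{\varphi_N\bigl(\tfrac{u}{\sqrt{Mv_N}}\bigr)e^{-i\frac{u m_N}{\sqrt{Mv_N}}}\Bigr\}^{M}du ,
\end{align*}
where $\varphi_N$ is the characteristic function of $\nu_{\phi_N}$. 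We also have $q_0(x)=\frac{1}{2\pi}\int_{\mathbb{R}}e^{-iux}e^{-u^2/2}du$. The difference is therefore bounded, uniformly in $K$, by the sum of three integrals over $|u|\le A$, over $A<|u|\le \delta\sqrt{M}$, and over $\delta\sqrt{M}<|u|\le \pi\sqrt{Mv_N}$, plus the Gaussian tail $\int_{|u|>A}e^{-u^2/2}du$.

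The natural change of variables is $t=u/\sqrt{Mv_N}$. Since $\sqrt{v_N}\asymp a_N$, the relevant frequencies are of order $1/a_N$. The key object is $Y_N=X_1/a_N$. By the computations preceding the statement, the law of $Y_N$ converges to the Gamma law with density $\propto t^\rho e^{-t/\lambda}$ on $\mathbb{R}_+$, and its moments converge as well. This follows because $Q_n^+(\phi_N)\sim B_n^+(\lambda)a_N^{\rho+n+1}$ and $\widetilde Q_n^-(\phi_N)\to d_n$, so the negative part contributes $O(a_N^{-(\rho+1)})$ to every moment. We work with the normalized variable $W_N=(X_1-m_N)/\sqrt{v_N}$, which has mean $0$ and variance $1$, and whose third absolute moment is bounded uniformly in $N$ by the same asymptotics.

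\emph{Region $|u|\le A$.} A uniform Taylor expansion gives $E[e^{isW_N}]=1-\frac{s^2}{2}+O(|s|^3)$, with the $O$ uniform in $N$ because $\sup_N E|W_N|^3<\infty$. Putting $s=u/\sqrt{M}$ and raising to the power $M$ yields convergence to $e^{-u^2/2}$ uniformly on compacts.

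\emph{Region $A<|u|\le\delta\sqrt M$.} For $|s|\le\delta$ with $\delta$ small, the same expansion gives $|E e^{isW_N}|\le e^{-s^2/4}$. The integrand is then bounded by $e^{-u^2/4}$, which is small once $A$ is large.

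\emph{Region $\delta\sqrt M<|u|\le\pi\sqrt{Mv_N}$.} This corresponds to $\delta/\sqrt{v_N}\le|t|\le\pi$, i.e.\ $|t|a_N\gtrsim\delta$, and it is the main obstacle. Here we need a bound $\sup_{\delta'/a_N\le|t|\le\pi}|\varphi_N(t)|\le 1-c/ (\text{something at most polynomial in } a_N)$ for some $c>0$. The $M$-th power then decays like $\exp(-cN/a_N^{p})$, which beats the polynomial length $\pi\sqrt{Mv_N}\asymp\sqrt N a_N$ of the interval. We take $p=\rho+1$, which requires $a_N=o(N^{1/(\rho+1)})$, and the $p$ could be improved as needed.

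We split this region in two.
\begin{itemize}
\item For $\delta'/a_N\le|t|\le \epsilon$ (moderate $t a_N$): the distribution of $X_1$ restricted to $k\ge0$ is $\phi_N^k/g(k)!$, close to a discretized Gamma density at scale $a_N$. The characteristic function is approximately that of a Gamma law evaluated at $ta_N$, which is bounded away from $1$ in modulus for $|ta_N|\ge\delta'$. For $ta_N\to\infty$ we use summation by parts with the eventual monotonicity of $k\mapsto\phi_N^k/g(k)!$ from condition $(g)$. This gives $|\sum_{k\ge0}e^{itk}\phi_N^kG(k)|\le C/|t|+C\,a_N^{\rho}$ relative to $Z(\phi_N)\asymp a_N^{\rho+1}$, hence $|\varphi_N(t)|\le 1-c$ whenever $|t|a_N\ge\delta'$, up to a careful treatment of constants.
\item For $\epsilon\le|t|\le\pi$: use $1-|\varphi_N(t)|\ge$ a sum over consecutive lattice points, $\frac{1}{2}\sum_k \nu_{\phi_N}(k)\wedge\nu_{\phi_N}(k+1)\,|1-e^{it}|^2$-type lower bounds. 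The masses of consecutive points are comparable by $(g)$, so this is at least $c\epsilon^2$.
\end{itemize}
In both sub-cases the bound is $|\varphi_N(t)|\le 1-c$, so the integral over this region is at most $\sqrt N a_N e^{-cM}\to0$. The hypothesis $a_N=o(N^\beta)$ is what makes this polynomial-versus-exponential comparison work. Assembling the three regions and letting $N\to\infty$, then $A\to\infty$, gives the theorem.
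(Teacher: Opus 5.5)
Your route is the paper's route: the inversion formula, a CLT/Taylor region, and a far region handled by summation by parts, as in the $D_3^{\pm}$ estimate in the proof of Theorem \ref{lclt}. Your scaling is actually more careful than the paper's. With $t$ the frequency of the unnormalized $X_1$, the cubic Taylor remainder is of order $|t|^3a_N^3$, so the quadratic bound holds only for $|t|\lesssim 1/a_N$. The paper's $D_2$ step asserts it on $\|s\|\le\gamma_0$ with $\gamma_0$ fixed, where $1-\frac14 s^T\Sigma s$ is eventually negative. The window $|t|\asymp 1/a_N$ therefore genuinely needs its own argument. Comparing with $(1-i\lambda\theta)^{-(\rho+1)}$ for $\theta=ta_N$ in a compact $[\delta',T]$ is the right device for it. Your lattice-pair bound for $\epsilon\le|t|\le\pi$ is also valid: use disjoint pairs $(2k,2k+1)$, and note that $\nu_{\phi_N}(k+1)/\nu_{\phi_N}(k)=\phi_N/g(k+1)$ is bounded above and below uniformly in $N$.

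The gap is the step $ta_N\ge T$, and it has two parts.

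\textbf{The stated inequality is wrong.} Summation by parts does not give $|\sum_{k\ge0}e^{itk}\phi_N^kG(k)|\le C/|t|+Ca_N^{\rho}$. The total variation of $w(k)=\phi_N^k/g(k)!\approx C^+k^\rho e^{-k/(\lambda a_N)}$ must also carry the factor $1/|1-e^{it}|$. As written, your bound would force $|\varphi_N(T/a_N)|\to0$ when $\rho>0$, contradicting the Gamma limit. What summation by parts over all $k\ge0$ actually gives is $|\varphi_N(t)|\le C/(|t|a_N^{\min(1,\rho+1)})+o(1)$.

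\textbf{For $-1<\rho<0$ the argument does not close.} The corrected bound suffices when $\rho\ge0$. When $-1<\rho<0$, the weights decrease and their variation, of order $1$, sits at small $k$. You then get only $|\varphi_N(t)|\lesssim 1/(|t|a_N^{\rho+1})$. The range $T/a_N\le|t|\ll a_N^{-(\rho+1)}$ is covered neither by this bound, nor by the Gamma comparison (compact $\theta$ only), nor by your $|t|\ge\epsilon$ argument. This is a loss of a factor $a_N^{-\rho}$, which ``careful treatment of constants'' cannot repair.

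\textbf{The fix.} Remove the small-$k$ block before summing by parts. First, bound $\nu_{\phi_N}(\{k<\varepsilon a_N\})\le c\,\varepsilon^{\rho+1}+o(1)$ trivially; this includes the negative part, whose mass is $O(a_N^{-(\rho+1)})$. Second, for $k\ge\varepsilon a_N$, condition $(g)$ gives $|\nu_{\phi_N}(k+1)-\nu_{\phi_N}(k)|\le C(a_N^{-1}+k^{-1})\nu_{\phi_N}(k)\le C(\varepsilon)a_N^{-1}\nu_{\phi_N}(k)$, and also $\nu_{\phi_N}(\lceil\varepsilon a_N\rceil)\le C(\varepsilon)/a_N$. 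Together these yield $|\varphi_N(t)|\le c\,\varepsilon^{\rho+1}+C(\varepsilon)/(|t|a_N)+o(1)$, which is at most $1/2$ for all $T(\varepsilon)/a_N\le|t|\le\pi$.

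This is the paper's $\varepsilon a_N$ splitting, except that the small-$k$ block is kept outside the factor $1/|1-e^{it}|$. It needs no monotonicity. For $\rho>0$ the weights are in any case unimodal, with mode near $\rho\lambda a_N$, rather than eventually monotone. It also makes your separate $|t|\ge\epsilon$ sub-case unnecessary.

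Finally, drop the remark about $p=\rho+1$ and $a_N=o(N^{1/(\rho+1)})$. That restriction is not in the hypothesis. Once you have a uniform bound $1-c$, the assumption $a_N=o(N^\beta)$ already beats the interval length $\pi\sqrt{Mv_N}\asymp\sqrt N a_N$.
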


\smallskip

\begin{proof}[Proof of Theorem \ref{thm1}; the case $g^+ > g^-$]
Let $f:\mathbb{R}^A \to \mathbb{R}$ be a 
bounded continuous local function 
where $A$ is a finite subset of $\mathbb{N}$. 
For every $N\in \mathbb{N}$ so that $\Lambda_N \supset A$, 
we have 
\begin{align*}
& E^{\mu_{N}^{K_N, L_N}}\bigl[ f\bigl( 
\frac{\cdot}{a_N} \bigr)\bigr]  \\
& \quad = 
\sum\limits_{\eta\in \mathbb{Z}^A} 
f\bigl( \frac{\eta}{a_N} \bigr) 
{\mu_{N}^{K_N, L_N}}( \xi_A = \eta) \\
& \quad = 
\sum\limits_{\eta\in \mathbb{Z}^A} 
f\bigl( \frac{\eta}{a_N} \bigr) 
{\overline{\nu}^{N}_{\phi}}
\bigl(\xi_A = \eta \bigm| {\Omega}_N^{K_N, L_N} \bigr) \\
& \quad = 
\sum\limits_{\eta\in \mathbb{Z}^A} 
f\bigl( \frac{\eta}{a_N} \bigr) 
\overline{\nu}_{\phi}^{A}(\eta) 
+ 
\sum\limits_{\eta\in \mathbb{Z}^A} 
f\bigl( \frac{\eta}{a_N} \bigr) 
\overline{\nu}_{\phi}^{A}(\eta)
\Bigl\{
\frac{
\overline{\nu}_{\phi}^{\Lambda_N \setminus A}
\bigl( S_{\Lambda_N \setminus A} = K_N-S_A(\eta), 
S_{\Lambda_N \setminus A}^- \leq L_N-S_A^-(\eta)\bigr)}
{\overline{\nu}_{\phi}^{N}
({\Omega}_N^{K_N, L_N} \bigr)} -1 \Bigr\} \\
& \quad =: I_9 +I_{10}. 
\end{align*}
Since the right hand side does not depend on the 
choice of $\phi$, we choose $\phi^*_N$ so that 
$E^{\nu_{\phi^*_N}}[\xi]= \kappa a_N$. 

For the convergence of $I_9$, 
we again assume that 
$f$ is a function of one variable for notational simplicity. 
We have that 
\begin{align*}
\sum\limits_{\eta\in \mathbb{Z}} 
f\bigl( \frac{\eta}{a_N} \bigr) 
{{\nu}}_{\phi^*_N}(\eta) 
& = 
\sum\limits_{k \geq 0} 
f\bigl( \frac{k}{a_N} \bigr) \frac{1}{{Z}(\phi^*_N)}
(\phi^*_N)^k 
\frac{1}{g(k)!}
+ 
\sum\limits_{k\leq -1} 
f\bigl( \frac{k}{a_N} \bigr) \frac{1}{{Z}(\phi^*_N)}(\phi^*_N)^{k} 
{g(k+1)!}.
\end{align*}
By the similar computation as before, the first term converges to 
$\frac{1}{\Gamma(\rho +1)\theta^{\rho +1}}\int_0^\infty f(t) 
e^{-\frac{1}{\theta}t}t^{\rho}dt$ and 
the second term converges to $0$ as $N\to \infty$. 

To prove that $I_{10} \to 0$ as $N\to \infty$, 
it suffices to establish the following two estimates, 
as in the case $g^+ \leq g^-$. 
\begin{align}\label{C1}
\sup_{N\geq 1}
\sup\limits_{\eta \in \mathbb{Z}^A} 
\Bigm| \!
\frac{
\overline{\nu}_{\phi^*_N}^{\Lambda_N \setminus A}
\bigl( S_{\Lambda_N \setminus A} = K_N-S_A(\eta), 
S_{\Lambda_N \setminus A}^- \leq L_N-S_A^-(\eta)\bigr)}
{\overline{\nu}_{\phi^*_N}^{N}
({\Omega}_N^{K_N, L_N} \bigr)} -1 \!\Bigm| \leq C, 
\end{align}
for some $C<\infty$ and 
\begin{align}\label{C2}
\sup\limits_{\eta \in D_A(\lfloor Ra_N \rfloor)} 
\Bigm| \!
\frac{
\overline{\nu}_{\phi^*_N}^{\Lambda_N \setminus A}
\bigl( S_{\Lambda_N \setminus A} = K_N-S_A(\eta), 
S_{\Lambda_N \setminus A}^- \leq L_N-S_A^-(\eta)\bigr)}
{\overline{\nu}_{\phi^*_N}^{N}
({\Omega}_N^{K_N, L_N} \bigr)} -1 \!\Bigm| 
\to 0, 
\end{align}
as $N\to \infty$ for every $R>0$. 
We have that 
\begin{align*}
\overline{\nu}_{\phi^*_N}^{N}
({\Omega}_N^{K_N, L_N} \bigr) 
& = 
\overline{\nu}_{\phi^*_N}^{N}(S_N = K_N) - 
\overline{\nu}_{\phi^*_N}^{N}(S_N = K_N, S_N^- \geq L_N+1). 
\end{align*}
By Theorem \ref{lclt3} and the choice of $\phi_N^*$, 
\begin{align*}
\overline{\nu}_{\phi^*_N}^{N}(S_N = K_N) = 
\frac{1}{\sqrt{N v_N}}\bigl(\frac{1}{\sqrt{2\pi}} + o(1)\bigr).
\end{align*}
Also, by Lemma \ref{lem5}, 
\begin{align*}
\overline{\nu}_{\phi^*_N}^{N}(S_N = K_N, S_N^- \geq L_N+1) 
\leq 
\overline{\nu}_{\phi^*_N}^{N}(S_N^- \geq L_N) 
\leq e^{-\gamma_0 L_N}, 
\end{align*}
for every $N$ large enough and this is negligible 
compared to 
$\overline{\nu}_{\phi^*_N}^{N}(S_N = K_N)$. 
Therefore, we obtain 
\begin{align}\label{eq11}
\overline{\nu}_{\phi^*_N}^{N}
({\Omega}_N^{K_N, L_N} \bigr) 
& = 
\frac{1}{\sqrt{ 2\pi N v_N}}(1 + o(1)),
\end{align}
as $N\to \infty$. 
Next, applying Theorem \ref{lclt3} again, 
the following holds for every $\eta\in \mathbb{Z}^A$. 
\begin{align*}
& \overline{\nu}_{\phi^*_N}^{\Lambda_N \setminus A}
\bigl( S_{\Lambda_N \setminus A} = K_N-S_A(\eta), 
S_{\Lambda_N \setminus A}^- \leq L_N-S_A^-(\eta)\bigr) \\
& \quad \leq 
\overline{\nu}_{\phi^*_N}^{\Lambda_N \setminus A}
\bigl( S_{\Lambda_N \setminus A} = K_N-S_A(\eta)\bigr) \\
& \quad  = 
\frac{1}{\sqrt{(N-|A|)v_N}}\Bigl[
\frac{1}{\sqrt{2\pi}} 
\exp\bigl\{
-\frac{(K_N-S_A(\eta)-(N-|A|)m_N)^2}{2(N-|A|)v_N}
\bigr\} + o(1) 
\Bigr], 
\end{align*}
where the $o(1)$ term does not depend on $\eta$. 
Combining this with (\ref{eq11}), we obtain (\ref{C1}). 

Finally, for every $\eta \in D_A(\lfloor Ra_N \rfloor)$, 
we have 
$0 \leq S_A^-(\eta) \leq |A|R a_N = o(L_N)$. 
Therefore, by an argument similar to the above, 
\begin{align*}
& \overline{\nu}_{\phi^*_N}^{\Lambda_N \setminus A}
\bigl( S_{\Lambda_N \setminus A} = K_N-S_A(\eta), 
S_{\Lambda_N \setminus A}^- \leq L_N-S_A^-(\eta)\bigr) \\
& \ =  
\overline{\nu}_{\phi^*_N}^{\Lambda_N \setminus A}
\bigl( S_{\Lambda_N \setminus A} = K_N-S_A(\eta)\bigr) 
- \overline{\nu}_{\phi^*_N}^{\Lambda_N \setminus A}
\bigl( S_{\Lambda_N \setminus A} = K_N-S_A(\eta), 
S_{\Lambda_N \setminus A}^- \geq L_N-S_A^-(\eta)+1 
\bigr) \\
& \ = 
\frac{1}{\sqrt{(N-|A|)v_N}}\Bigl[
\frac{1}{\sqrt{2\pi}} 
\exp\bigl\{
-\frac{(|A|m_N-S_A(\eta))^2}{2(N-|A|)v_N}
\bigr\} + o(1) 
\Bigr] \\
& \ = 
\frac{1}{\sqrt{ 2\pi (N-|A|) v_N}}(1 + o(1)),
\end{align*}
where the $o(1)$ term does not depend on $\eta$. 
The last equality follows from 
$(|A|m_N-S_A(\eta))^2 \leq C (a_N)^2 = o(Nv_N)$. 
Combining this with (\ref{eq11}) again, 
we obtain (\ref{C2}) and we can complete the proof.  
\end{proof}

\subsection{Proof of Corollary \ref{cor1}}
\begin{proof}[Proof of Corollary \ref{cor1}]
By Proposition \ref{prop1} and Markov chain convergence theorem, 
we have 
\begin{align*}
\lim\limits_{n\to \infty} 
E_{\eta} \Bigl[ \frac{1}{N}
\bigm|\!\!\bigl\{x\in \Lambda_N; 
\frac{1}{a_N} {X_n (x)} \in (b, c) \bigr\}
\!\!\bigm|\Bigr] 
& = 
\lim\limits_{n\to \infty} 
\frac{1}{N} \sum\limits_{x\in \Lambda_N}
E_{\eta} \Bigl[ 
I \bigl( \frac{1}{a_N} X_n (x) 
\in (b, c)\bigr) \Bigr] \\
& = 
\frac{1}{N} 
\sum\limits_{x\in \Lambda_N}
\mu_{N}^{K_N, L_N}
\bigl( \frac{1}{a_N} \xi(x) \in (b, c)\bigr)\\
& = 
\mu_{N}^{K_N, L_N}
\bigl( \frac{1}{a_N} \xi(1) \in (b, c)\bigr), 
\end{align*}
for every $\eta \in \Omega_N^{K_N, L_N}$ and 
$N \in \mathbb{N}$. 
Therefore, it is sufficient to show that 
$\lim\limits_{N\to \infty} 
\mu_{N}^{K_N, L_N}
\bigl( \frac{1}{a_N} \xi(1) \in (b, c)\bigr) 
= 
\mu_{\kappa, \lambda} ((b, c))$. 
This follows from 
Theorem \ref{thm1} and 
the basic facts about the weak convergence of probability measures. 
\end{proof}

\section{Proof of the local limit theorem}
In this section we give the proof of 
Theorem \ref{lclt}. 
Let us recall the setting. 
$\{X_j\}_{j\in \Lambda_N}$ denotes a family of 
independent and identically distributed 
$\mathbb{Z}$-valued random variables 
with common distribution $\nu_{\phi_N, \psi_N}$ given by 
(\ref{nu}). 
$\{\phi_N\}_{N\geq 1}$ and 
$\{\psi_N\}_{N\geq 1}$ satisfy 
$1-\frac{\phi_N}{g^+} \sim \frac{1}{\lambda^+ a_N}$
and 
$1-g^-{\psi_N} \sim \frac{1}{\lambda^- a_N}$ 
as $N\to \infty$ where 
$\lambda^+, \lambda^- > 0$ and 
$\{a_N\}_{N\geq 1}$ be an increasing 
sequence of positive numbers which satisfies 
$\lim\limits_{N\to \infty}a_N = \infty$ and 
$a_N=o(N^\beta)$ as $N\to \infty$ for some 
$\beta >0$. 
We set 
$m_N^+ = E[X_1^{+}]$, 
$m_N^- = E[X_1^{-}]$, 
$v_N^+ = \mathrm{Var}(X_1^{+})$, 
$v_N^- = \mathrm{Var}(X_1^{-})$. 
These asymptotics are given by (\ref{mv}). 

We first note that Theorem \ref{lclt} is a 
local limit theorem for the joint distribution of
$\sum\limits_{j\in\Lambda_N}X_j^+$ 
and $\sum\limits_{j\in\Lambda_N}X_j^-$,
where the random variables form a triangular array, 
since their common distribution $\nu_{\phi_N,\psi_N}$ 
depends on $N$, the number of random variables. 
Although local limit theorems for triangular arrays of 
integer-valued random variables are classical, 
the known criteria for such local limit theorems 
(e.g. \cite{DM95}, \cite{M91}) do not apply in our setting, 
since both the mean and the variance under $\nu_{\phi_N,\psi_N}$ 
tend to infinity as $N\to\infty$. 
We prove the theorem following the classical argument 
in \cite[Chapter VII]{Pet}. 
The result is also a two-dimensional extension of 
\cite[Theorem 3.2]{S26}. 
\begin{proof}[Proof of Theorem \ref{lclt}]
For notational simplicity we only consider 
the case $B=\emptyset$. The modification for the general finite 
set $B\subset \mathbb{N}$ is straightforward. 

Set $\widetilde{X}_j^{+} := \frac{1}{\sqrt{N v_N^+}}
(X_j^{+} - m^+_N)$, 
$\widetilde{X}_j^{-} := \frac{1}{\sqrt{N v_N^-}}
(X_j^{-} - m^-_N)$, 
$j \in \Lambda_N$. 
$f(u_1, u_2) := 
E\bigl[\exp\{i (u_1 X_1^{+}+ u_2 X^-_1) \}\bigr]$, 
$(u_1, u_2) \in \mathbb{R}^2$ denotes 
the characteristic function of 
$(X_1^+, X_1^-)$ where $i= \sqrt{-1}$. 
We also define 
\begin{align*}
\widetilde{f}_N(u_1, u_2) 
& := 
E\bigl[\exp \bigl\{i 
(u_1 \sum\limits_{j \in \Lambda_N} \widetilde{X}_j^{+}
+ u_2 \sum\limits_{j \in \Lambda_N} \widetilde{X}_j^{-}
)\bigr\}\bigr] \\
& = 
\exp \Bigl\{ -i 
\bigl( u_1 \frac{\sqrt{N}{m_N^+}}{\sqrt{v_N^+}}
+u_2 \frac{\sqrt{N}{m_N^-}}{\sqrt{v_N^-}}
\bigr)\Bigr\}
\Bigl( f\bigl( 
\frac{u_1}{\sqrt{N v_N^+}}, 
\frac{u_2}{\sqrt{N v_N^-}} \bigr)\Bigr)^N. 
\end{align*} 
By the inversion formula, 
\begin{align*}
P \bigl(\sum\limits_{j\in \Lambda_N} X_j^{+} =K, 
\sum\limits_{j\in \Lambda_N} X_j^{-} =L
\bigr) 
& = 
\frac{1}{(2\pi)^2} 
\int_{-\pi}^{\pi} \int_{-\pi}^{\pi} 
e^{-i( K u_1  + L u_2 )} 
\bigl( f(u_1, u_2) \bigr)^N du_1 du_2, 
\end{align*}
for every $K, L\in \mathbb{Z}_+$. 
Therefore, we have 
\begin{align}\label{a1}
P \bigl(\sum\limits_{j\in \Lambda_N} X_j^{+} =K, 
\sum\limits_{j\in \Lambda_N} X_j^{-} =L
\bigr) 
& = 
\frac{1}{(2\pi)^2 N \sqrt{v_N^+ v_N ^-}} 
\int_{-\pi \sqrt{N v_N^+}}^{\pi\sqrt{N v_N^+}} 
\int_{-\pi \sqrt{N v_N^-}}^{\pi\sqrt{N v_N^-}} 
e^{-i(\overline{K}_N u_1 + \overline{L}_N u_2)} 
\widetilde{f}_N (u_1, u_2) du_1 du_2, 
\end{align}
where 
$\overline{K}_N = \frac{K-N m_N^+}{\sqrt{N v_N^+}} 
$ and 
$\overline{L}_N = \frac{L-N m_N^-}{\sqrt{N v_N^-}} 
$. 
Also, since 
$ q_0(x, y)= 
\frac{1}{2\pi \sqrt{1-\tau^2}}
\exp\Bigl\{-\frac{x^2 -2\tau xy+y^2}{2(1-\tau^2)}
\Bigr\}$ 
is the probability density function 
of the two-dimensional Gaussian vector with 
mean vector $\underline{0}=(0, 0)\in \mathbb{R}^2$ 
and covariance matrix 
$Q=\left(
\begin{array}{cc}
1 & \tau \\ 
\tau & 1\\ 
\end{array}
\right) 
$, we have 
\begin{align}\label{a2}
q_0(x, y) 
& = 
\frac{1}{(2\pi)^2} 
\int_{\mathbb{R}^2}
e^{-i(x u_1 + y u_2 )} 
\exp \bigl\{ 
-\frac{1}{2}(u_1^2 + 2\tau u_1 u_2 +u_2^2)
 \bigr\} du_1 du_2. 
\end{align}
Let $\gamma \in (0, \pi)$. 
By (\ref{a1}) and (\ref{a2}), 
\begin{align*}
& (2\pi)^2 \Bigm|\! N \sqrt{v_N^+ v_N ^-} 
P \bigl(\sum\limits_{j\in \Lambda_N} X_j^{+} =K, 
\sum\limits_{j\in \Lambda_N} X_j^{-} =L \bigr) 
- q_0(\overline{K}_N, \overline{L}_N) \!\Bigm| \\
& \qquad \leq 
\int_{D_1} 
|\widetilde{f}_N(u_1, u_2) 
-e^{-\frac{1}{2}(u_1^2+ 2\tau u_1 u_2+u_2^2)} | du_1 du_2 
+ 
\Bigl(\int_{D_2}+ \int_{D_3^+}+ \int_{D_3^-} \Bigr)
|\widetilde{f}_N(u_1, u_2) | du_1 du_2 \\
& \qquad \qquad 
+ 
\int_{D_4} e^{-\frac{1}{2}(u_1^2+ 2\tau u_1 u_2+u_2^2)} 
du_1 du_2 \\
& \qquad =: J_1+ J_2+J_3^+ + J_3^- +J_4, 
\end{align*}
where 
\begin{align*}
D_1 & = 
\{(u_1, u_2); \| \underline{u} \| \leq R\}, \\
D_2 & = 
\{(u_1, u_2); \| \underline{u} \| \geq R, 
|u_1| \leq \gamma \sqrt{N v_N^+}, 
|u_2| \leq \gamma \sqrt{N v_N^-}\}, \\
D_3^+ & = 
\{(u_1, u_2); 
\gamma \sqrt{N v_N^+} \leq |u_1| \leq \pi \sqrt{N v_N^+}, 
|u_2| \leq \gamma \sqrt{N v_N^-}\}, \\
D_3^- & = 
\{(u_1, u_2); 
|u_1| \leq \gamma \sqrt{N v_N^+}, 
\gamma \sqrt{N v_N^-} \leq |u_2| \leq \pi \sqrt{N v_N^-} 
\}, \\
D_4 & = \{(u_1, u_2); 
\| \underline{u} \| \geq R\}. 
\end{align*}
$\|\underline{u} \|$ denotes the Euclid norm of 
$\underline{u}= (u_1, u_2)\in \mathbb{R}^2$. 
Note that the right hand side does not depend on $K$ and $L$.
We show that the right-hand side converges to $0$ as 
$N\to \infty$ and $R\to \infty$ for $\gamma>0$ small enough. 

\medskip

\noindent
{\underline{{\em Estimate of $J_1$}}:} \ 
Assume that the law of 
$(\sum\limits_{j \in \Lambda_N} \widetilde{X}_j^{+}, 
\sum\limits_{j \in \Lambda_N} \widetilde{X}_j^{-})$ 
converges to 
the two-dimensional Gaussian vector 
$\mathcal{N}( \underline{0}, Q)$. 
Then, we have 
$\lim\limits_{N\to \infty} \widetilde{f}_N(u_1, u_2) 
= e^{-\frac{1}{2} (u_1^2+2\tau u_1 u_2+ u_2^2)}$ for every 
$(u_1, u_2) \in \mathbb{R}^2$ and we obtain 
$J_1 \to 0$ as $N\to \infty$ by the bounded convergence theorem 
for every $R>0$. 
For the convergence of the law of 
$(\sum\limits_{j \in \Lambda_N} \widetilde{X}_j^{+}, 
\sum\limits_{j \in \Lambda_N} \widetilde{X}_j^{-})$, 
we have only to show that 
$\sum\limits_{j\in \Lambda_N} 
E\bigl[
\| \underline{\widetilde{X}_j}\|^2 
I\bigl( \| \underline{\widetilde{X}_j}\| \geq \varepsilon 
\bigr)\bigr] 
= N
E\bigl[
\| \underline{\widetilde{X}_1}\|^2 
I\big( \| \underline{\widetilde{X}_1}\| \geq \varepsilon 
\bigr)\bigr] 
\to 0$ 
as $N\to \infty$ for every $\vare>0$ 
by Lindberg's central limit theorem (cf. \cite{Dur}) 
where 
$\underline{\widetilde{X}_j} = 
({\widetilde{X}}_j^+, {\widetilde{X}}_j^-)$, 
$j\in \Lambda_N$. 
We have that 
\begin{align*}
\| \underline{\widetilde{X}_1}\|^2
= | {\widetilde{X}_1^+}|^2 + | {\widetilde{X}_1^-}|^2 
\leq 
\frac{2}{N}\Bigl\{
\frac{(X_1^+)^2}{v_N^+} + \frac{(m_N^+)^2}{v_N^+} 
+ \frac{(X_1^-)^2}{v_N^-} + \frac{(m_N^-)^2}{v_N^-} 
\Bigr\}, 
\end{align*}
where we used the fact that 
$(a+b)^2 \leq 2a^2 +2b^2$ for every $a, b\in \mathbb{R}$. 
Also, if 
$\| \underline{\widetilde{X}_1}\| \geq \vare$ then 
$|\widetilde{X}_1^{+}| \geq \frac{1}{\sqrt{2}}\vare $ or 
$|\widetilde{X}_1^{-}| \geq \frac{1}{\sqrt{2}}\vare $ and taking 
$m_N^+ \asymp \sqrt{v_N^+}$, $m_N^- \asymp \sqrt{v_N^-}$ 
into account, 
this means that 
${X}_1^{+} \geq \frac{1}{\sqrt{2}}\vare \sqrt{Nv_N^+}$ or 
${X}_1^{-} \geq \frac{1}{\sqrt{2}}\vare \sqrt{N v_N^-}$ 
for every $N$ large enough. 
Therefore, 
\begin{align*}
N
E\bigl[
\| \underline{\widetilde{X}_1}\|^2 I\big(
 \| \underline{\widetilde{X}_1}\| \geq \varepsilon \bigr)\bigr] 
& \leq 
2E\Bigl[
\Bigl\{
\frac{(X_1^+)^2}{v_N^+} + \frac{(m_N^+)^2}{v_N^+} 
+ \frac{(X_1^-)^2}{v_N^-} + \frac{(m_N^-)^2}{v_N^-} 
\Bigr\}
I\bigl( {X}_1^{+} \geq \frac{1}{\sqrt{2}}\vare \sqrt{Nv_N^+} 
\bigr)\Bigr] \\
& \qquad + 
2E\Bigl[
\Bigl\{
\frac{(X_1^+)^2}{v_N^+} + \frac{(m_N^+)^2}{v_N^+} 
+ \frac{(X_1^-)^2}{v_N^-} + \frac{(m_N^-)^2}{v_N^-} 
\Bigr\}
I\bigl( {X}_1^{-} \geq \frac{1}{\sqrt{2}}\vare \sqrt{Nv_N^-} 
\bigr)\Bigr] \\
& \leq 
\frac{2}{v_N^+} E\bigl[(X_1^+)^2
I\bigl( {X}_1^{+} \geq \frac{1}{\sqrt{2}}\vare \sqrt{Nv_N^+} 
\bigr)\bigr] 
+ \frac{2}{v_N^-} E\bigl[(X_1^-)^2
I\bigl( {X}_1^{-} \geq \frac{1}{\sqrt{2}}\vare \sqrt{Nv_N^-} 
\bigr)\bigr] \\
& \qquad 
+ C P\bigl({X}_1^{+} \geq \frac{1}{\sqrt{2}}\vare \sqrt{Nv_N^+}
\bigr) 
+ C P\bigl({X}_1^{-} \geq \frac{1}{\sqrt{2}}\vare \sqrt{Nv_N^-}
\bigr) \\
& =: J_5^+ + J_5^- + J_6^+ + J_6^-, 
\end{align*}
where we used $X_1^+X_1^-=0$ for the last inequality. 
For the estimate of $J_5^+$, we have 
\begin{align*}
J_5^+ & 
= 
\frac{2}{v_N^+} 
\sum\limits_{k \geq \frac{1}{\sqrt{2}}\vare \sqrt{Nv_N^+}}
k^2 \frac{1}{Z(\phi_N, \psi_N)}(\phi_N)^k \frac{1}{g(k)!}\\
& \leq 
\frac{C}{(a_N)^{\rho+3}}  
\sum\limits_{k \geq C'\vare \sqrt{N}a_N} k^2 
\Bigl\{g^+ \bigl(1-\frac{1}{\lambda^+ a_N} \bigr)^k\Bigr\}
\frac{C^+ k^\rho}{(g^+)^k} + o(1) \\
& \leq 
\frac{C}{a_N}  
\sum\limits_{k \geq C'\vare \sqrt{N}a_N} 
(\frac{k}{a_N})^{\rho+2} 
e^{-\frac{1}{\lambda^+} \frac{k}{a_N}}
+ o(1), 
\end{align*}
for some constants $C, C'>0$ 
and every $N$ large enough 
where we used (\ref{mv}) and the condition  $(g)$. 
The right-hand side goes to $0$ as $N\to\infty$ 
because 
\begin{align*}
\frac{1}{a_N} 
\sum\limits_{ \frac{k}{a_N} \geq b}
\Bigl(\frac{k}{a_N}\Bigr)^{\rho+2} 
e^{-\frac{1}{\lambda^+} \frac{k}{a_N}}
\to 
\int_b^\infty t^{\rho +2}e^{-\frac{1}{\lambda^+}t}dt <\infty, 
\end{align*}
as $N\to \infty$ for every $b \geq 0$. 
By the similar computation we obtain 
$J_5^-\to 0$. 
For the estimate of $J_6^+$, we have 
\begin{align*}
J_6^+ \leq 
\frac{C}{\vare \sqrt{N v_N^+}} 
E[X_1^{+}] 
\leq 
\frac{C'}{\vare \sqrt{N}}
\to 0, 
\end{align*}
as $N\to \infty$ where we used 
Markov's inequality and (\ref{mv}). 
Similarly, $J_6^- \to 0$ as $N\to \infty$. 
Thus, we obtain 
$\lim\limits_{N\to \infty}
\sum\limits_{j\in \Lambda_N} 
E\bigl[
\| \underline{\widetilde{X}_j}\|^2 
I\bigl( \| \underline{\widetilde{X}_j}\| \geq \varepsilon 
\bigr)\bigr] =0$ for every $\varepsilon >0$. 

\medskip

\noindent
{\underline{{\em Estimates of $J_2$ and $J_4$}}:} \ 
By Taylor's theorem, there exists $\gamma_0>0$ such that 
for every $(u_1, u_2) \in \mathbb{R}^2$ which satisfies 
$\| (\frac{u_1}{\sqrt{N v_N^+}}, \frac{u_2}{\sqrt{N v_N^-}}) \| 
\leq \gamma_0$, we have 
\begin{align*}
\bigm|\! E
\bigl[e^{i (u_1 \widetilde{X}_1^{+}+ 
u_2 \widetilde{X}_1^{-})}\bigr]\! \bigm| 
& \leq 
1-\frac{1}{4}
\bigl(\frac{u_1}{\sqrt{N v_N^+}}, \frac{u_2}{\sqrt{N v_N^-}}
\bigr)
\left(
\begin{array}{cc}
\mathrm{Var}(X_1^+) & \mathrm{Cov}(X_1^+, X_1^-) \\ 
\mathrm{Cov}(X_1^+, X_1^-) & 
\mathrm{Var}(X_1^-) \\ 
\end{array}
\right) 
\left(
\begin{array}{c}
\frac{u_1}{\sqrt{N v_N^+}} \\
\frac{u_2}{\sqrt{N v_N^-}} 
\end{array}
\right)
\\
& = 
1-\frac{1}{4N}
\{ u_1^2 + 2\tau(1+o(1))u_1 u_2+u_2^2\}
\leq 
e^{-\frac{C}{N}(u_1^2+ u_2^2)}, 
\end{align*}
for some $C>0$ and every $N$ large enough. 
For the last inequality,  we used the fact that 
$u_1^2 +2\tau u_1u_2+u_2^2 \geq 
(1+\tau)(u_1^2+u_2^2)$ for 
$\tau \in (-1, 0)$ and 
every $u_1, u_2\in \mathbb{R}$. 
Therefore, by taking $\gamma$ in the definition of 
$D_2$ as $\frac{1}{\sqrt{2}}\gamma_0$, we have 
$|\widetilde{f}_N(u_1, u_2)| \leq e^{-C(u_1^2+u_2^2)}$ 
on $D_2$ and this yields that 
\begin{align*}
J_2 \leq 
\int_{\{\| \underline{u} \| \geq R\}} 
e^{-C(u_1^2+u_2^2)} du_1 du_2 
\to 0, 
\end{align*}
as $R\to \infty$. Similarly, we have 
$J_4\to 0$ as $R\to \infty$. 

\medskip

\noindent
{\underline{{\em Estimates of $J_3^+$ and $J_3^-$}}:} \ 
We take $\gamma$ in the definition of $J_3^+$ and $J_3^-$ as 
$\frac{1}{\sqrt{2}}\gamma_0$ above. 
Let $Y^+$ be a $\mathbb{Z}_+$-valued random variable whose distribution 
is given by 
\begin{equation}\label{nu1}
\nu^+(k):= P(Y^+=k) =\frac{1}{Q_0^+(\phi_N)}
(\phi_N)^k\frac{1}{g(k)!},\ k\geq 0, 
\end{equation}
and $Y^-$ be a $\mathbb{N}$-valued random variable whose distribution 
is given by 
\begin{equation}\label{nu2}
\nu^-(k):= P(Y^-=k) =\frac{1}{Q_0^-(\psi_N)}
(\psi_N)^k{g(-k+1)!},\ k\geq 1. 
\end{equation}
Then, we have 
\begin{align*}
f(u_1, u_2) 
= 
E\bigl[\exp\bigl\{i(u_1 X_1^+ +u_2X_1^-)\bigr\}\bigr] 
=
\frac{Q_0^+(\phi_N)}{Z_N(\phi_N, \psi_N)}E[e^{iu_1 Y^+}] 
+ \frac{Q_0^-(\psi_N)}{Z_N(\phi_N, \psi_N)}E[e^{iu_2 Y^-}], 
\end{align*}
and 
\begin{align*}
\frac{Q_0^+(\phi_N)}{Z_N(\phi_N, \psi_N)} \sim 
\frac{B_0^+(\lambda^+)}{B_0^+(\lambda^+)+B_0^-(\lambda^-)}, \ 
\frac{Q_0^-(\psi_N)}{Z_N(\phi_N, \psi_N)} \sim 
\frac{B_0^-(\lambda^-)}{B_0^+(\lambda^+)+B_0^-(\lambda^-)}, 
\end{align*}
as $N\to \infty$. 
If we can show that 
there exists $s \in (0, 1)$ such that 
$|E\bigl[\exp \bigl\{ i \frac{u_1}{\sqrt{N v_N^+}} Y^+\bigr\}\bigr]| 
\leq 1-s$ 
on $D_3^+$ for every $N$ large enough, 
then we have 
\begin{align*}
\Bigm| 
f\bigl( \frac{u_1}{\sqrt{N v_N^+}}, \frac{u_2}{\sqrt{N v_N^-}}
\bigr) \Bigm| 
\leq 
\frac{Q_0^+(\phi_N)}{Z_N(\phi_N, \psi_N)}(1-s)
+ \frac{Q_0^-(\psi_N)}{Z_N(\phi_N, \psi_N)}
\leq 1-s', 
\end{align*}
for some $s'=s'(\lambda^+, \lambda^-)>0$ on $D_3^+$ 
and this yields that 
\begin{align*}
J_3^+ \leq \int_{D_3^+}(1-s')^N du_1 du_2 
\leq C\sqrt{Nv_N^+}\sqrt{Nv_N^-}(1-s')^N 
\leq C'Na_N^2 (1-s')^N \to 0, 
\end{align*}
as $N\to \infty$. 
Similarly, we obtain $I_3^- \to 0$ once we can show that 
$|E\bigl[\exp \bigl\{ i \frac{u_2}{\sqrt{N v_N^-}} Y^-\bigr\}\bigr]| 
\leq 1-s$ for some $s \in (0, 1) $
on $D_3^-$. 

For the estimate of 
$|E\bigl[\exp \bigl\{ i \frac{u_1}{\sqrt{N v_N^+}} Y^+\bigr\}\bigr]|$, 
let $\delta \in (0, 1)$ be fixed and define 
$h^\delta(u_1) := 
E\bigl[ (\delta e^{iu_1})^{Y^+}\bigr] = 
\sum\limits_{k\geq 0} \delta^k e^{iu_1 k} 
{{\nu}^+(k)}$. By the proof of 
\cite[Lemma 5.4]{LSV96}, we know that 
\begin{align*}
|h^\delta(u_1)| \leq 
\frac{1}{|1-\delta e^{iu_1}|}\Bigl\{
{\nu}^+ (0)+ 
\sum\limits_{k\geq 0} 
|{{\nu}^+(k+1)}-{{\nu}^+(k)}|\Bigr\}.  
\end{align*}
We have 
${\nu}^+(0) = \frac{1}{Q_0^+(\phi_N)} \to 0$ 
as $N\to \infty$. 
Also, 
\begin{align*}
|{{\nu}^+(k+1)}-{{\nu}^+(k)}|
& = 
{\nu}^+(k) 
\bigm|\! \frac{\phi_N}{g(k+1)}-1\!\bigm|  \\ 
& = 
{\nu}^+(k) 
\Bigm|\! \frac{g^+\bigl(1-\frac{1+o(1)}{\lambda^+ a_N}\bigr)}
{g^+ \bigl(1-\frac{\rho}{k}+O(\frac{1}{k^2}) \bigr)}-1\!\Bigm| , 
\end{align*}
as $N\to \infty$ and $k\to \infty$ by the condition $(g)$. 
This yields that there exist $C_1>0$ such that 
$|{{\nu}^+(k+1)}-{{\nu}^+(k)}| \leq C_1 \nu^+(k)$ 
for every $N\geq 1$ and $k \geq 1$. 
Also, 
for every $\vare>0$ there exist $N_0\geq 1$ and $C_2>0$ 
such that 
$|{{\nu}^+(k+1)}-{{\nu}^+(k)}| \leq C \nu^+(k)\bigl(\frac{1}{a_N} 
+ \frac{1}{k}) \leq \frac{C_2}{a_N} \nu^+(k)
$ 
for every $N\geq N_0$ and $k\geq \vare a_N$. 
Therefore, 
\begin{align*}
\sum\limits_{k\geq 0} 
|{{\nu}^+(k+1)}-{{\nu}^{+}(k)}| 
\leq 
C_1 \sum\limits_{k=0}^{\vare a_N} {\nu}^+ (k) 
+ 
\frac{C_2}{a_N} 
\sum\limits_{k\geq \vare a_N} {\nu}^+(k), 
\end{align*}  
for every $N$ large enough. 
The first term 
of the right-hand side converges to 
$C_3\int_0^{\vare} t^{\rho}e^{-\frac{1}{\lambda^+}t}dt$ 
for some $C_3>0$ by the similar computation as before. 
The second term is less than $\frac{C_2}{a_N}$ 
and this goes to $0$ as $N\to \infty$. 
As a result, for every $\vare> 0$ there exists $C_4(\vare)>0$ 
which satisfies $C_4(\vare) \to 0$ as $\vare \to 0$ and 
it holds that 
\begin{align*}
|h^\delta(u_1)| \leq 
\frac{1}{|1-\delta e^{i u_1}|} \bigl\{ C_4(\vare) + o(1)
\bigr\}, 
\end{align*}
for every $u_1 \in \mathbb{R}$. 
By taking the limit $\delta \uparrow 1$ and using the estimate 
$|1-e^{i u}| \geq \frac{2}{\pi}|u|$ for every 
$|u|\leq \pi$, we obtain 
\begin{align*}
|E\bigl[\exp \bigl\{ i \frac{u_1}{\sqrt{N v_N^+}} Y^+\bigr\}\bigr]|
& \leq 
\frac{\sqrt{N v_N^+}}{|u_1|} \frac{\pi}{2} 
\bigl\{ C_4(\vare) + o(1)\bigr\} \\
& \leq 
\frac{\pi}{\sqrt{2}\gamma_0}\bigl\{C_4(\vare)+o(1)\bigr\}, 
\end{align*}
for every $u_1 \in \mathbb{R}$ so that 
$\frac{1}{\sqrt{2}}\gamma_0 \sqrt{N v_N^+} \leq |u_1| \leq 
\pi \sqrt{N v_N^+}$. 
Hence, by taking $\vare >0$ small enough, 
there exists $s \in (0, 1)$ such that 
$|E\bigl[\exp \bigl\{ i \frac{u_1}{\sqrt{N v_N^+}} Y^+\bigr\}\bigr]|
\leq 1-s$ on $D_3^+$ for every $N$ large enough. 

The estimate of 
$|E\bigl[\exp \bigl\{ i \frac{u_1}{\sqrt{N v_N^-}} Y^-\bigr\}\bigr]|$ 
can be obtained by the similar manner. We only state the difference. 
$\nu^+$ in the above argument is replaced by $\nu^-$ and we have 
\begin{align*}
|{{\nu}^-(k+1)}-{{\nu}^-(k)}|
& = 
{\nu}^-(k) 
\bigm|\! {\psi_N}{g(-k)}-1\!\bigm|  \\ 
& = 
{\nu}^-(k) 
\Bigm|\! \frac{1}{g^-}\bigl(1-\frac{1+o(1)}{\lambda^- a_N}\bigr)
{g^- \bigl(1+\frac{\rho}{k}+O(\frac{1}{k^2}) \bigr)}-1\!\Bigm|, 
\end{align*}
as $N\to \infty$ and $k\to \infty$ by the condition $(g)$. 
This yields that there exist $C'_1>0$ such that 
$|{{\nu}^-(k+1)}-{{\nu}^-(k)}| \leq C'_1 \nu^-(k)$ 
for every $N\geq 1$ and $k \geq 1$. 
Also, 
for every $\vare>0$ there exist $N'_0\geq 1$ and $C'_2>0$ 
such that 
$|{{\nu}^-(k+1)}-{{\nu}^-(k)}| \leq C \nu^-(k)\bigl(\frac{1}{a_N} 
+ \frac{1}{k}) \leq \frac{C'_2}{a_N} \nu^-(k)
$ 
for every $N\geq N_0$ and $k\geq \vare a_N$. 
Otherwise, the details are the same as above.

\end{proof}

\medskip

Finally, Theorem \ref{lclt3} is the one-dimensional version of 
Theorem \ref{lclt} and has a statement similar to that of
\cite[Theorem 3.2]{S26}. 
Note that all moments of 
the negative part under $\nu_{\phi_N}$ 
converge to $0$ as $N\to\infty$. 
Therefore, 
Theorem \ref{lclt3} can be proved by exactly the same argument as 
that used in the proof of \cite[Theorem 3.2]{S26}, 
and we omit the details.


\section*{Acknowledgement 
}
This work was partially supported 
by JSPS KAKENHI Grant Number 26K06851. 
Generative AI tools were used for language editing 
as well as to debug and optimize the code used in the numerical simulations. 
After using these tools, the author reviewed and edited the content 
and source code as needed, 
and takes full responsibility for the content of the article.


\end{document}